\theoremstyle{plain}
\newtheorem{theorem}{Theorem}[section]
\newtheorem{lemma}[theorem]{Lemma}
\newtheorem{claim}{Claim} 
\newtheorem{proposition}[theorem]{Proposition}
\newtheorem{mainth}{Theorem}
\theoremstyle{definition}
\newtheorem{definition}[theorem]{Definition}
\newtheorem{notation}[theorem]{Notation}
\newtheorem*{acknowledgement}{Acknowledgement}
\theoremstyle{remark}
\newtheorem{remark}[theorem]{Remark}
\newtheorem{fact}[theorem]{Fact}
\numberwithin{equation}{section}
\newcommand{\bP}{\mathbb{P}}
\newcommand{\sP}{\mathsf{P}}
\newcommand{\cS}{\mathcal{S}}
\newcommand{\bN}{\mathbb{N}}
\newcommand{\sJ}{\mathsf{J}}
\newcommand{\supp}{\operatorname{supp}}
\newcommand{\cE}{\mathcal{E}}
\newcommand{\sH}{\mathsf{H}}
\newcommand{\cO}{\mathcal{O}}
\newcommand{\bD}{\mathbb{D}}
\newcommand{\bC}{\mathbb{C}}
\newcommand{\bL}{\mathbb{L}}
\newcommand{\bR}{\mathbb{R}}
\newcommand{\diam}{\operatorname{diam}}
\newcommand{\cM}{\mathcal{M}}
\newcommand{\GCD}{\mathrm{GCD}}
\newcommand{\PGL}{\mathrm{PGL}}
\newcommand{\ord}{\operatorname{ord}}
\newcommand{\bZ}{\mathbb{Z}}
\newcommand{\sR}{\mathsf{R}}
\newcommand{\Char}{\operatorname{char}}
\newcommand{\crit}{\operatorname{Crit}}
\newcommand{\widevec}[1]{\overrightarrow{#1}}
\begin{document}
\title[On a degenerating limit theorem of DeMarco--Faber]{On a degenerating limit theorem of DeMarco--Faber}

\author{Y\^usuke Okuyama}
\address{Division of Mathematics, Kyoto Institute of Technology, Sakyo-ku, Kyoto 606-8585 JAPAN}
\email{okuyama@kit.ac.jp}

\date{\today}

\subjclass[2010]{Primary 37P50; Secondary 37F45, 14G22}
\keywords{meromorphic family of rational functions, degeneration, maximal entropy measure, Berkovich projective line, balanced measure, quantization}

\begin{abstract}
 One of our aims is to complement the proof of DeMarco--Faber's 
 degenerating limit theorem for
 the family of the unique maximal entropy measures
 parametrized by a punctured open disk 
 associated to a meromorphic family of rational functions
 on the complex projective line degenerating at the puncture. 
 This complementation is done by our main result,
 which rectifies a key computation in their argument.
 We also establish and use a direct and explicit translation from 
 degenerating complex dynamics into quantized Berkovich dynamics,
 instead of using DeMarco--Faber's more conceptual transfer principle
 between those dynamics.
\end{abstract}

\maketitle 

\section{Introduction}
\label{sec:intro}
Let $K$ be an algebraically closed field that is complete with respect to
a non-trivial and non-archimedean absolute value.
The action of a rational function $h\in K(z)$ 
on $\bP^1=\bP^1(K)$ extends continuously to that on 
the Berkovich projective line $\sP^1=\sP^1(K)$, which is a compact
augmentation of $\bP^1$. If in addition
$\deg h>0$, then this extended action of $h$
on $\sP^1$ is surjective, open, and
fiber-discrete and preserves the type (among I, II, III, and IV)
of each point in $\sP^1$,
and the local degree function $\deg_{\,\cdot}h$ of $h$ 
on $\bP^1$ also extends upper semicontinuously to $\sP^1$ so that for every open subset $V$ in $\sP^1$ and every component $U$ 
of $h^{-1}(V)$, $V\ni\cS'\mapsto\sum_{\cS\in h^{-1}(\cS')\cap U}\deg_{\cS}h\equiv\deg(h:U\to V)$. 

The pushforward operator $h_*:C^0(\sP^1)\to C^0(\sP^1)$ 
is defined so that for every $\psi\in C^0(\sP^1)$,
$(h_*\psi)(\cdot):=\sum_{\cS\in h^{-1}(\cdot)}(\deg_{\cS}h)\psi(\cS)$
on $\sP^1$. 
The pullback operator $h^*$ from the
space $M(\sP^1)$ of all Radon measures on $\sP^1$ to itself is defined by the 
transpose of $h_*$, so that for every $\nu\in M(\sP^1)$,
\begin{gather}
 h^*\nu=\int_{\sP^1}\Bigl(\sum_{\cS'\in h^{-1}(\cS)}(\deg_{\cS'}h)\delta_{\cS'}\Bigr)\nu(\cS)\quad\text{on }\sP^1,\label{eq:pullbackdirac}
\end{gather} 
where for each point $\cS\in\sP^1$, $\delta_{\cS}$ is the Dirac measure
at $\cS$ on $\sP^1$; in particular, $(h^*\delta_{\cS})(\sP^1)=\deg h$.

\subsection{Factorization on $\sP^1$ and quantization}\label{sec:skeltal}
We follow the presentation in \cite[\S 4.2]{DF14}.
For each finite subset $\Gamma$ consisting of type II points
(e.g., a semistable vertex set) in $\sP^1$, the family
\begin{gather*}
 S(\Gamma):=
\bigl\{\text{either a component of }\sP^1\setminus\Gamma\text{ or a singleton }\{\cS\}\text{ for some }\cS\in\Gamma\bigr\}
\subset 2^{\sP^1}
\end{gather*}
is a partition of $\sP^1$; 
the measurable factor space $\sP^1/S(\Gamma)=S(\Gamma)$ equipped with
the $\sigma$-algebra $2^{S(\Gamma)}$ is regarded as 
the measurable space $(\sP^1,2^{S(\Gamma)})$, 
also regarding $2^{S(\Gamma)}$
as a $\sigma$-subalgebra in the Borel $\sigma$-algebra on $\sP^1$.

Let $M(\Gamma)$ be the set of all complex measures $\omega$
on $\sP^1/S(\Gamma)$.
The measurable factor map
\begin{gather*}
\pi_{\Gamma}=\pi_{\sP^1,\Gamma}:\sP^1\to\sP^1/S(\Gamma) 
\end{gather*}
induces the pullback operator $(\pi_\Gamma)^*$ from the space of 
measurable functions on $\sP^1/S(\Gamma)$ to that of measurable functions on $\sP^1$
and, in turn, the transpose (projection/quantization operator) 
$(\pi_\Gamma)_*:M(\sP^1)\to M(\Gamma)$ of $(\pi_\Gamma)^*$ 
(by restricting each element of $M(\sP^1)$ to $2^{S(\Gamma)}$),
so in particular that for every $\nu\in M(\sP^1)$,
\begin{gather}
 \bigl((\pi_\Gamma)_*\nu\bigr)(\{U\})=\nu(U)
\quad\text{for any }U\in S(\Gamma).\label{eq:projectfactor}
\end{gather}
Set
$M^1(\sP^1):=\{\omega\in M(\sP^1):\omega\ge 0\text{ and }\omega(\sP^1)=1\}$
and 
$M^1(\Gamma):=\{\omega\in M(\Gamma):\omega\ge 0\text{ and }\omega(\sP^1/S(\Gamma))=1\}$, so that $(\pi_\Gamma)_*(M^1(\sP^1))\subset M^1(\Gamma)$. 
Set also
\begin{gather*}
M^1(\Gamma)^\dag:=\bigl\{\omega\in M^1(\Gamma):\omega(\{\cS\})=0
\text{ for every }\cS\in\Gamma\bigr\}.
\end{gather*}
For any finite subsets $\Gamma$ and $\Gamma'$, $\Gamma\subset\Gamma'$, both
consisting of type II points, the measurable factor map 
\begin{gather*}
 \pi_{\Gamma',\Gamma}:\sP^1/S(\Gamma')\to\sP^1/S(\Gamma) 
\end{gather*}
induces the pullback operator $(\pi_{\Gamma',\Gamma})^*$ 
from the space of measurable functions on $\sP^1/S(\Gamma)$
to that of measurable functions on $\sP^1/S(\Gamma')$ (so that
$\pi_\Gamma^*=(\pi_{\Gamma'})^*(\pi_{\Gamma',\Gamma})^*$)
and, in turn, the transpose (or projection operator) $(\pi_{\Gamma',\Gamma})_*:M(\Gamma')\to M(\Gamma)$ of $(\pi_{\Gamma',\Gamma})^*$, so in particular that for every $\omega\in M(\Gamma')$,
\begin{gather} 
 \bigl((\pi_{\Gamma',\Gamma})_*\omega\bigr)(\{U\})
=\omega\bigl(\bigl\{V\in S(\Gamma'):V\subset U\bigr\}\bigr)
\quad\text{for any }U\in S(\Gamma)\label{eq:factorrel} 
\end{gather}
and that $(\pi_{\Gamma',\Gamma})_*(\pi_{\Gamma'})_*=(\pi_{\Gamma})_*$.
Then
$(\pi_{\Gamma',\Gamma})_*(M^1(\Gamma')^\dag)\subset M^1(\Gamma)^\dag$.

Let us denote by $\cS_G$ the Gauss (or canonical) point in $\sP^1$,
which is a type II point (see Subsection \ref{sec:berkovichgeneral}).
For a rational function $h\in K(z)$ on $\bP^1$ of degree $>0$, 
noting that $h(\cS_G)$ is also a type II point and
setting
\begin{gather*}
 \Gamma_G:=\{\cS_G\}\quad\text{and}\quad\Gamma_h:=\{\cS_G,h(\cS_G)\}, 
\end{gather*}
the quantized pullback operator $h_G^*:M(\Gamma_h)\to M(\Gamma_G)$  
is induced from the pullback operator $h^*
$ in \eqref{eq:pullbackdirac}; for every $\omega\in M(\Gamma_h)$,
the measure $h_G^*\omega\in M(\Gamma_G)$ in particular satisfies
\begin{gather*}
 \bigl(h_G^*\omega\bigr)(\{U\})=\int_{\sP^1/S(\Gamma_h)}m_{V,U}(h)\omega(V)
\quad\text{for any }U\in S(\Gamma_G),
\end{gather*}
where the quantized local degree $m_{V,U}(h)$ of $h$ with respect to
each pair $(U,V)\in S(\Gamma_G)\times S(\Gamma_h)$ 
is induced from the local degree function $\deg_{\,\cdot}h$
on $\sP^1$ so that, fixing any $\cS'\in V$,
\begin{align*}
 m_{V,U}(h)
=\begin{cases}
 (h^*\delta_{\cS'})(U)
  & \text{if }U\in S(\Gamma_G)\setminus\{\{\cS_G\}\}\text{ and }V\in S(\Gamma_h)\setminus\{\{h(\cS_G)\}\},\\
 (h^*\delta_{\cS'})(\{\cS_G\})  & \text{if }U=\{\{\cS_G\}\}
 \end{cases}
\end{align*}
(the remaining case that $U\in S(\Gamma_G)\setminus\{\{\cS_G\}\}$
and
$V=\{\{h(\cS_G)\}\}$ is more subtle) and that for every $V\in S(\Gamma_h)$,
$\sum_{U\in S(\Gamma_G)}m_{V,U}(h)=\deg h$. In particular,
\begin{gather*}
 \bigl(h_G^*\omega\bigr)(S(\Gamma_G))
=(\deg h)\cdot\omega(S(\Gamma_h))\quad\text{for every }\omega\in M(\Gamma_h),\quad\text{and}\\
\bigl((\deg h)^{-1}h_G^*\bigr)(M^1(\Gamma_h)^\dag)\subset M^1(\Gamma_G)^\dag
\end{gather*}
(see Subsection \ref{sec:quantized} for more details including
the precise definition of $m_{V,U}(h)$ and that of $h_G^*$). 

\subsection{The $f$-balanced measures on $\sP^1$ and the maximally ramification locus of $f$
in $\sP^1$}\label{sec:balanced}
From now on, let $f\in K(z)$ be a rational function on $\bP^1$ of $\deg f=:d>1$. 

The equilibrium (or canonical) measure $\nu_f$ of $f$ on $\sP^1$ is 
the weak limit
\begin{gather}
 \nu_f:=\lim_{n\to\infty}\frac{(f^n)^*\delta_{\cS}}{d^n}\quad\text{in }M(\sP^1)\quad\text{for any }\cS\in\sP^1\setminus E(f)\label{eq:equidist}
\end{gather}
(see \cite{FR09} for the details), and is 
the unique $\nu\in M^1(\sP^1)$ not only having the $f$-balanced property 
\begin{gather*}
 f^*\nu=(\deg f)\cdot\nu\quad\text{on }\sP^1
\end{gather*}
but also satisfying the vanishing condition $\nu(E(f))=0$. Here
the (classical) exceptional set
$E(f):=\{a\in\bP^1:\#\bigcup_{n\in\bN\cup\{0\}}f^{-n}(a)<\infty\}$
of $f$ is the union of all (superattracting) cycles of $f$ in $\bP^1$
totally invariant under $f$ (so 
is at most countable).

The ramification locus $\sR(f):=\{\cS\in\sP^1:\deg_{\cS}f>1\}$ of $f$
contains 
the (classical) critical set 
$\crit(f):=\{c\in\bP^1:f'(c)=0\}$ of $f$, and
the maximally ramification locus 
\begin{gather*}
 \sR_{\max}(f):=\{\cS\in\sP^1:\deg_{\cS}(f)=d\}(\subset\sR(f)) 
\end{gather*}
of $f$ contains $E(f)(\subset\crit(f))$;
since $\sR_{\max}(f)$ is connected (Faber \cite[Theorem 8.2]{Faber13topologyI}),
for every 
$c\in\sR_{\max}(f)\cap\bP^1$, 
$\sR_{\max}(f)$ near $c$ contains a closed {\itshape interval} $[c,\cS]$
(see Subsection \ref{sec:berkovichgeneral}) 
in $\sP^1$ for some $\cS\in\sP^1\setminus\{c\}$. 

\begin{definition}[tame maximal ramification]
For each $c\in\sR_{\max}(f)\cap\bP^1$,
we say $f$ is tamely maximally ramified near $c$
if $\sR_{\max}(f)$ near $c$ {\em is} a closed interval $[c,\cS]$
in $\sP^1$ for some $\cS\in\sP^1\setminus\{c\}$. 
\end{definition}

\begin{fact}[a consequence of Faber {\cite[Corollary 6.6]{Faber13topologyI}}]
 $f$ is tamely maximally ramified at every $c\in\sR_{\max}(f)\cap\bP^1$
 if 
the residue characteristic 
 of $K$ is either $=0$ or $>d(=\deg f)$ (e.g.\ when $K=\bL$
 as in Subsection \ref{sec:deglimit} below).
\end{fact}

We note that when $\Char K=0$, 
\begin{gather}
 E(f)=\bigl\{a\in\bP^1:f^{-2}(a)=\{a\}\bigr\}\quad\text{and}\quad
\#E(f)\le\#\bigl(\sR_{\max}(f)\cap\bP^1\bigr)
\le 2.\label{eq:maximaldegree}
\end{gather}

The Berkovich Julia set $\sJ(f):=\supp\nu_f$ of $f$ is in $\sP^1\setminus E(f)$ 
(by \eqref{eq:equidist}); 
both $\sJ(f)$ and $E(f)$ are 
$f$-completely invariant.
Any $\nu\in M^1(\sP^1)$ (only) having the above $f$-balanced property on $\sP^1$
is written as 
\begin{gather*}
\nu=\nu(\sJ(f))\cdot\nu_f+\sum_{\cE\subset E(f):\text{ a cycle of }f}\nu(\cE)\cdot\frac{\sum_{a\in\cE}\delta_a}{\#\cE}\quad\text{on }\sP^1
\end{gather*}
(by \eqref{eq:equidist} and the countability of $E(f)$). For every $n\in\bN$, 
we also have $\nu_{f^n}=\nu_f$ in $M^1(\sP^1)$ (so $\sJ(f^n)=\sJ(f)$) 
and $E(f^n)=E(f)$. 

Recall that for any $\cS\in\sH^1:=\sP^1\setminus\bP^1$,
\begin{gather}
 f^{-1}(\cS)\neq\{\cS\}\Leftrightarrow\nu_f(\{\cS\})<1\Leftrightarrow\supp(\nu_f)\neq\{\cS\}
\Leftrightarrow\nu_f(\{\cS\})=0\Leftrightarrow\nu_f(\{f(\cS)\})=0\label{eq:nopotgood}
\end{gather}
(see e.g.\ {\cite[Corollary 10.33]{BR10}}),
so in particular, $f^{-1}(\cS)\neq\{\cS\}$ if and only if 
$f^{-n}(\cS)\neq\{\cS\}$ for every $n\in\bN$.
For 
every $\nu\in M^1(\sP^1)$ having the $f$-balanced property on $\sP^1$
and
every finite subset $\Gamma$ in $\sP^1$ consisting of type II points, we have
\begin{gather}
 \bigl((\pi_{\Gamma})_*\nu\bigr)\bigl(S(\Gamma)\setminus F\bigr)=0\quad
\text{for some countable subset }F\text{ in }S(\Gamma)\label{eq:countablysupp}
\end{gather}
(by \eqref{eq:equidist}) and
\begin{gather}
 (\pi_{\Gamma})_*\nu\in M^1(\Gamma)^\dag
\quad\text{if in addition }f^{-1}(\cS)\neq\{\cS\}\text{ for every }\cS\in\Gamma.\label{eq:canonicalpush}
\end{gather}

\subsection{Main result: the projections of the $f$-balanced measures
on $\sP^1$ to $\sP^1/S(\Gamma_G)$}\label{sec:mainth} 
Recall that $d:=\deg f>1$ and that
$\Gamma_G:=\{\cS_G\}$, and for each $n\in\bN$, set
\begin{gather*}
 \Gamma_n:=\Gamma_{f^n}=\bigl\{\cS_G,f^n(\cS_G)\bigr\}.
\end{gather*}

Let us say $\omega\in M^1(\Gamma_f)$ has
the quantized $f$-balanced property if
\begin{gather}
 f_G^*\omega=d\cdot (\pi_{\Gamma_f,\Gamma_G})_*\omega\quad\text{in }M^1(\Gamma_G).\label{eq:quantizedbalanced} 
\end{gather}
Set $\Delta_f\subset M^1(\Gamma_G)$ 
(resp.\ $\Delta_f^\dag\subset M^1(\Gamma_G)^\dag$) by
\begin{multline}
\Delta_f\text{ (resp.\ $\Delta_f^\dag$)}\\
:=\Bigl\{\omega\in M^1(\Gamma_G):
\text{for (any) }n\gg 1,\text{ there is }\omega_n\in M^1(\Gamma_n)
\text{ (resp.\ $\omega_n\in M^1(\Gamma_n)^\dag$)}\\
\text{such that }
\omega_n\bigl(S(\Gamma_n)\setminus F\bigr)=0
\text{ for some countable subset }F\text{ in }S(\Gamma_n)\text{ and}\\
\text{that }d^{-n}\bigl((f^n)_G\bigr)^*\omega_n=
\omega=(\pi_{\Gamma_n,\Gamma_G})_*\omega_n\text{ in }M^1(\Gamma_G)\Bigr\};
\label{eq:deltaquantized}
\end{multline}
for a subtlety on the first vanishing assumption on each $\omega_n$,
see Remark \ref{th:notalways}.

Our principal result is the following computations of 
$\Delta_f$ (and $\Delta_f^\dag$) when $\Char K=0$,
which in particular rectifies \cite[Theorem 4.10, Corollary 4.13]{DF14};
the assumption on the period of each $a\in E(f)$
is for simplicity, and $f^2$ always satisfies this condition, and
the tame maximal ramification condition for $f$ near $a$ in the case (ii)
to obtain \eqref{eq:computationdelta}
always holds when $K=\bL$ as in Subsection \ref{sec:deglimit} below. 

\begin{mainth}\label{th:computation}
Let $K$ be an algebraically closed field of characteristic $0$
that is complete with respect to
a non-trivial and non-archimedean absolute value, 
let $f\in K(z)$ be a rational function on $\bP^1$ of degree $d>1$,
and suppose that $f^{-1}(\cS_G)\neq\{\cS_G\}$ and
that $f(a)=a$ $($or equivalently $f^{-1}(a)=\{a\}$$)$ for any $a\in E(f)$.
Then one and only one of the following cases $(i)$ and $(ii)$ occurs$;$

$(i)$ $\Delta_f=\Delta_f^\dag=\{(\pi_{\Gamma_G})_*\nu_f\}$.

$(ii)$ 
there is a $($unique$)$ $a\in E(f)$ such that $\lim_{n\to\infty}f^n(\cS_G)=a$
and that $f^n(\cS_G)$ is in the interval $(\cS_G,a]$ in $\sP^1$
for $n\gg 1$, and 
then $\deg_{f^n(\cS_G)}(f)\equiv d$ 
$($i.e., $f^n(\cS_G)\in\sR_{\max}(f)$$)$
for $n\gg 1$, 
and $\{(\pi_{\Gamma_G})_*\nu_f\}\subsetneq\{(\pi_{\Gamma_G})_*\delta_a,(\pi_{\Gamma_G})_*\nu_f\}\subset\Delta_f^\dag$.

In the case $($ii$)$, if in addition 
$f$ is tamely maximally ramified near 
$a$, then
\begin{multline}
\Delta_f=\biggl\{\omega\in M^1(\Gamma_G):\text{satisfying }
\begin{cases}
\omega(\{U_{\vec{v}}\})=s\nu_f(U_{\vec{v}})\\
\hspace*{40pt}\text{for every }\vec{v}\in\bigl(T_{\cS_G}\sP^1\bigr)\setminus\{\widevec{\cS_Ga}\},\\
 \omega(\{\{\cS_G\}\})=s',\text{ and}\\
\omega(\{U_{\widevec{\cS_Ga}}\})
=\Bigl(s\nu_f\bigl(U_{\widevec{\cS_Ga}}\bigr)+(1-s)\Bigr)-s'
\end{cases}\\
\hspace*{10pt}\text{for some }
s\in[0,1]\text{ and some }
s'\in\Bigl[0,\min\bigl\{s\nu_f\bigl(U_{\widevec{\cS_Ga}}\bigr),(1-s)\bigl(1-\nu_f\bigl(U_{\widevec{\cS_Ga}}\bigr)\bigr)\bigr\}\Bigr]
\biggr\},
\label{eq:computationdelta}
\end{multline}
which in particular yields
\begin{gather*}
 \Delta_f^\dag=
\bigl\{s\cdot(\pi_{\Gamma_G})_*\nu_f+(1-s)\cdot(\pi_{\Gamma_G})_*\delta_a:s\in[0,1]\bigr\},
\end{gather*}
and moreover, the following three statements that
$\deg_{f^n(\cS_G)}(f)\equiv d$ $($i.e., $f^n(\cS_G)\in\sR_{\max}(f)$$)$
for any $n\in\bN\cup\{0\}$, that
$\nu_f(U_{\widevec{\cS_Ga}})=0$, and that $\Delta_f=\Delta_f^\dag$
are equivalent.
\end{mainth}

In the proof of Theorem \ref{th:computation}, we will also
point out that for some $f$ (indeed $f(z)=z^2+t^{-1}z\in(\cO(\bD)[t^{-1}])[z](\subset\bL[z])$ and its iterations),
we have the proper inclusion $\Delta_f^\dag\subsetneq\Delta_f$.

\subsection{Application: the degenerating weak limit for the maximal entropy measures on $\bP^1(\bC)$}\label{sec:deglimit}

We call an element $f\in(\cO(\bD)[t^{-1}])(z)$ of degree say $d\in\bN\cup\{0\}$
a meromorphic family of rational functions on 
$\bP^1(\bC)$ (of degree $d$ and parametrized by 
\begin{gather*}
 \bD=\{t\in\bC:|t|<1\})
\end{gather*}
if for every $t\in\bD^*=\bD\setminus\{0\}$, 
the specialization $f_t$ of $f$ at $t$ is 
a rational function on $\bP^1(\bC)$ 
of degree $d$. Let us denote by $\mathbb{L}$ 
the (algebraically closed and complete) 
valued field of formal Puiseux series$/\bC$ 
around $t=0$, i.e., the completion of the field $\overline{\bC((t))}$ 
of Puiseux series$/\bC$ around $t=0$
valuated by their vanishing orders at $t=0$.
Noting that $\cO(\bD)[t^{-1}]$ 
is a subring of the field $\bC((t))$ of Laurent series$/\bC$ around 
$t=0$, we also regard $f$ as an element of $\bL(z)$. 
If in addition $d>1$, then for every $t\in\bD^*$, 
there is the equilibrium (or canonical, and indeed the unique maximal entropy) 
measure $\mu_{f_t}$ of $f_t$ on $\bP^1(\bC)$ (see Fact \ref{maxentropy}). 
As already seen in Subsection \ref{sec:balanced}, 
there is also the equilibrium (or canonical) measure $\nu_f$ of 
the $f\in\bL(z)$ of degree $d>1$ on $\sP^1(\bL)$.

If in addition $\nu_f(\{\cS_G\})=0$ or equivalently 
$f^{-1}(\cS_G)\neq\{\cS_G\}$ in $\sP^1(\bL)$
(mentioned in \eqref{eq:nopotgood}, see also another equivalent 
condition \eqref{eq:goodred} below),
then recalling that $\Gamma_G:=\{\cS_G\}$ 
as in Subsection \ref{sec:skeltal} and noting that
\begin{gather*}
 S(\Gamma_G)\setminus\{\{\cS_G\}\}=T_{\cS_G}(\sP^1(\bL))\cong
\bP^1(k_{\bL})=\bP^1(\bC), 
\end{gather*}
where $k_{\bL}(=\bC$ as fields) is the residue field of $\bL$ and 
where the bijection
between the tangent (or directions) space $T_{\cS_G}(\sP^1(\bL))$ of $\sP^1(\bL)$ at $\cS_G$ and $\bP^1(k_{\bL})$ is given by 
$\widevec{\cS_G a}\leftrightarrow\tilde{a}$ for each $a\in\bP^1(\bL)$
(see Subsection \ref{th:constantreduction} for the reduction 
$\tilde{a}\in\bP^1(k_{\bL})$ of $a$), 
the projection/quantization 
$(\pi_{\Gamma_G})_*\nu_f\in M^1(\Gamma_G)^{\dag}$
of $\nu_f\in M_1(\sP^1(\bL))$ is 
also regarded as a purely atomic probability measure on $\bP^1(\bC)$
(by \eqref{eq:canonicalpush}).

Using Theorem \ref{th:computation} and by some new arguments 
relating the $t$-adic absolute value on $\bL$, which 
restricts to the trivial 
(so non-archimedean) absolute value on $\bC=k_{\bL}$,
with the 
Euclidean absolute value on $\bC$,
we complement the proof of the following degenerating
limit theorem of DeMarco--Faber. 

\begin{mainth}[{\cite[Theorem B]{DF14}}]\label{th:B}
For every meromorphic family 
\begin{gather*}
 f\in\bigl(\cO(\bD)[t^{-1}]\bigr)(z)(\subset\bL(z)) 
\end{gather*}
of rational functions
on $\bP^1(\bC)$ of degree $>1$, if $f^{-1}(\cS_G)\neq\{\cS_G\}$
in $\sP^1(\bL)$, then 
\begin{gather}
 \lim_{t\to 0}\mu_{f_t}=(\pi_{\Gamma_G})_*\nu_f\quad\text{weakly on }\bP^1(\bC).\label{eq:convergence}
\end{gather} 
\end{mainth}

We dispense with the intermediate 
``target bimeromorphically modified surface dynamics'' part
in the (conceptual) ``transfer principle'' from degenerating
complex dynamics to quantized Berkovich dynamics
in \cite[The proof of Theorem B]{DF14}, and give and use
a more direct and explicit translation from degenerating 
complex dynamics into quantized Berkovich dynamics
(see Definition \ref{th:direct} and Proposition \ref{th:transfer}
in Section \ref{sec:direct}). 
We hope our argument could also be helpful for a further investigation
of degenerating complex dynamics (see e.g.\ \cite{Favre16,DF18}).

\subsection*{Organization of the paper}
In Sections \ref{sec:background} and \ref{sec:quantizedbalanced},
we recall some notion and facts from non-archimedean dynamics on $\sP^1$ and
also recall some details on DeMarco--Faber's degenerating balanced property 
for degenerating weak limit points of the maximal entropy measures on $\bP^1(\bC)$,
respectively.
Section \ref{sec:direct} is one of the main parts in this paper, as mentioned 
in the above paragraph.
Theorem \ref{th:computation} is shown in Section \ref{sec:proofmain}, and
our proof of Theorem \ref{th:B} is given in Section \ref{sec:proof}. 
In Section \ref{sec:example}, a specific example, which motivated
our computation of $\Delta_f$ (and $\Delta_f^\dag$) in Theorem \ref{th:computation},
is discussed. In Section \ref{sec:complement}, we 
develop a little further our
direct translation from degenerating 
complex dynamics into quantized Berkovich dynamics, for completeness.

\section{Background from Berkovich dynamics}
\label{sec:background}

Let $K$ be an algebraically closed field that is complete with respect to
a non-trivial and non-archimedean absolute value $|\cdot|$.

\subsection{Berkovich projective line}\label{sec:berkovichgeneral}
We call $B(a,r):=\{z\in K:|z-a|\le r\}$ for some $a\in K$ and 
some $r\in\bR_{\ge 0}$ a $K$-closed disk; for any $K$-closed disks
$B,B'$, if $B\cap B'\neq\emptyset$, then either $B\subset B'$ or $B\supset B'$.
The Berkovich projective line $\sP^1=\sP^1(K)$ over $K$ is
a compact, uniquely arcwise connected, 
locally arcwise connected, and Hausdorff topological space;
as sets, 
\begin{gather*}
\sP^1=\bP^1\cup\sH^1=\bP^1\cup\sH^1_{\mathrm{II}}\cup\sH^1_{\mathrm{III}}\cup\sH^1_{\mathrm{IV}}\quad (\text{the disjoint unions}),\\
\bP^1=\bP^1(K)=K\cup\{\infty\}\cong\sP^1_{\mathrm{I}}\cong\bigl\{\{a\}=B(a,0):a\in K\bigr\}\cup\{\infty\},\\
\sH^1_{\mathrm{II}}\cong\bigl\{B(a,r):a\in K,r\in|K^*|\bigr\},\quad\text{and}\\
\sH^1_{\mathrm{III}}\cong\bigl\{B(a,r):a\in K,r\in\bR_{>0}\setminus|K^*|\bigr\}.
\end{gather*}
More precisely, each element of $\sP^1$ is regarded as
either the cofinal equivalence 
class of a decreasing (i.e., non-increasing and nesting)
sequence of $K$-closed disks or $\infty\in\bP^1$. 
The inclusion relation $\subset$ among $K$-closed disks
canonically
extends to an ordering $\preceq$ on $\sP^1$, so that $\infty$ is 
the maximum element in $(\sP^1,\preceq)$, and the diameter function
$\diam_{|\cdot|}$ for $K$-closed disks also extends 
upper semicontinuously
to $\sP^1$, so that $\diam_{|\cdot|}(\infty)=+\infty$. 
For $\cS_1,\cS_2\in\sP^1$, if $\cS_1\preceq\cS_2$, then we set
$[\cS_1,\cS_2]=[\cS_2,\cS_1]:=\{\cS\in\sP^1:\cS_1\preceq\cS\preceq\cS_2\}$, and
in general there is the minimum element $\cS'$ in $\{\cS\in\sP^1:\cS_1\preceq\cS\text{ and }\cS_2\preceq\cS\}$ and we set 
\begin{gather*}
 [\cS_1,\cS_2]=[\cS_2,\cS_1]:=[\cS_1,\cS']\cup[\cS',\cS_2];
\end{gather*}
we also set $(\cS_1,\cS_2]:=[\cS_1,\cS_2]\setminus\{\cS_1\}$.
Those (closed) intervals $[\cS,\cS']$
in $\sP^1$ equip $\sP^1$ with a (profinite) tree structure
in the sense of Jonsson \cite[\S 2]{Jonsson15}.

For every $\cS\in\sP^1$, the tangent (or directions) space $T_{\cS}\sP^1$ 
of $\sP^1$ at $\cS$ is
\begin{gather*}
 T_{\cS}\sP^1:=\bigl\{\vec{v}=\widevec{\cS\cS'}:\text{the germ of a non-empty left half open interval }(\cS,\cS']\bigr\};
\end{gather*}
then
$\#T_{\cS}\sP^1=1$ if and only if $\cS\in\bP^1\cup\sH^1_{\mathrm{IV}}$, 
$\#T_{\cS}\sP^1=2$ if and only if $\cS\in\sH^1_{\mathrm{III}}$, and
$T_{\cS}\sP^1\cong\bP^1(k)$ 
if and only if $\cS\in\sH^1_{\mathrm{II}}$ (see \eqref{directionreduction}
and Facts \ref{th:Mobius}, \ref{th:Mobiustangent} below).
Identifying each $\vec{v}\in T_{\cS}\sP^1$ with
\begin{gather*}
 U_{\vec{v}}=U_{\cS,\vec{v}}:=\bigl\{\cS'\in\sP^1\setminus\{\cS\}:\widevec{\cS\cS'}=\vec{v}\bigr\}\subset 2^{\sP^1}, 
\end{gather*}
the collection $(U_{\cS,\vec{v}})_{\cS\in\sP^1,\vec{v}\in T_{\cS}\sP^1}$ is a quasi open basis of the 
(Gel'fand, weak, pointwise, or observer) topology on $\sP^1$(, and 
both $\bP^1$ and $\sH^1_{\mathrm{II}}$ are dense in $\sP^1$), 
and 
for every $\cS\in\sH^1_{\mathrm{II}}$, 
we identify $T_{\cS}\sP^1$ with $S(\{\cS\})\setminus\{\{\cS\}\}$
by the canonical bijection
\begin{gather*}
T_{\cS}\sP^1\ni\vec{v}\leftrightarrow U_{\vec{v}}\in S(\{\cS\})\setminus\{\{\cS\}\}.
\end{gather*}

The Gauss (or canonical) point $\cS_G\in\sH^1_{\mathrm{II}}$ is represented by
(the constant sequence of) the $K$-closed unit disk, that is, 
the ring $\cO_K=B(0,1)$ of $K$-integers; 
the unique maximal ideal in $\cO_K$
is $\cM_K:=\{z\in K:|z|<1\}$, and
\begin{gather*}
 k=k_K:=\cO_K/\cM_K 
\end{gather*}
is the residue field of $K$, which 
is still algebraically closed under the standing assumption on $K$. 
The residue characteristic of $K$ is $\Char k$.

The reduction $\tilde{a}\in\bP^1(k)$ of a point $a\in\bP^1(K)$ 
is defined by the point $\tilde{a_1}/\tilde{a_0}\in\bP^1(k)$, where
$a_1,a_0\in K$ are chosen so that
$a=a_1/a_0$ (regarding $1/0=\infty\in\bP^1$) and that $\max\{|a_0|,|a_1|\}=1$
(so $\tilde{\infty}=\infty\in\bP^1(k)=k\cup\{\infty\}$).
There is also a canonical bijection
\begin{gather}
 T_{\cS_G}\sP^1\ni\widevec{\cS_Ga}\leftrightarrow\tilde{a}\in\bP^1(k).
\label{directionreduction} 
\end{gather}

For more details on (dynamics on) $\sP^1$, 
see e.g.\ the books \cite{BR10,BenedettoBook} and the survey article \cite{Jonsson15}.

\subsection{Dynamics on $\sP^1$ and their reductions}
For every $h\in K(z)$, writing 
\begin{gather*}
 h(z)=\frac{P(z)}{Q(z)},\quad
P(z)=\sum_{j=0}^{\deg h}a_j z^j\in K[z],\quad\text{and}\quad
Q(z)=\sum_{\ell=0}^{\deg h}b_\ell z^\ell\in K[z],
\end{gather*}
this $h$ is regarded as the point 
$[b_0:\cdots:b_{\deg h}:a_0:\cdots:a_{\deg h}]\in\bP^{2(\deg h)+1}(K)$. 
Then choosing $P,Q$
so that 
\begin{gather*}
 \max\bigl\{|b_0|,\ldots,|b_{\deg h}|,|a_0|,\ldots,|a_{\deg h}|\bigr\}=1,
\end{gather*}
we obtain the point
$\tilde{h}=\bigl[\widetilde{b_0}:\cdots:\widetilde{b_{\deg h}}:\widetilde{a_0}:\cdots:\widetilde{a_{\deg h}}\bigr]
\in\bP^{2(\deg h)+1}(k)$; this point $\tilde{h}\in\bP^{2(\deg h)+1}(k)$ is formally written as
\begin{gather*}
\tilde{h}=H_{\tilde{h}}\phi_{\tilde{h}},
\end{gather*}
where we set
$\tilde{P}(\zeta):=\sum_{j=0}^{\deg h}\widetilde{a_j}\zeta^j\in k[\zeta]$,
$\tilde{Q}(z):=\sum_{\ell=0}^{\deg h}\widetilde{b_\ell}\zeta^\ell\in k[\zeta]$,
\begin{gather*}
 H_{\tilde h}(X_0,X_1):=\GCD\bigl(X_0^{\deg h}\tilde{Q}(X_1/X_0),X_0^{\deg h}\tilde{P}(X_1/X_0)\bigr)\in\bigcup_{\ell=0}^{\deg h}k[X_0,X_1]_\ell\setminus\{0\},\\
\text{and}\quad 
\phi_{\tilde{h}}(\zeta)
:=\frac{\tilde{P}(\zeta)/H_{\tilde{h}}(1,\zeta)}{\tilde{Q}(\zeta)/H_{\tilde{h}}(1,\zeta)}\in k(\zeta)
\end{gather*}
($H_{\tilde{h}}$ is unique up to multiplication in $k^*$).
The rational function $\phi_{\tilde{h}}\in k(\zeta)$ on $\bP^1(k)$
is called the reduction of $h$, the degree of which equals
$\deg h-\deg H_{\tilde{h}}$. 

\begin{notation}
 When $\deg H_{\tilde{h}}>0$, we denote by
 $[H_{\tilde{h}}=0]$ the effective divisor on $\bP^1(k)$
 defined by the zeros of $H_{\tilde{h}}$ on $\bP^1(k)$
 taking into account their multiplicities, so that
 $\deg[H_{\tilde{h}}=0]=\deg H_{\tilde{h}}$. When $\deg H_{\tilde{h}}=0$, we set
 $[H_{\tilde{h}}=0]:=0$ on $\bP^1(k)$ by convention.
\end{notation}

The action on $\bP^1$ of $h\in K(z)$ extends continuously to 
that on $\sP^1$, and if in addition $\deg h>0$, then 
this extended action is surjective, open, and
fiber-discrete, and preserves $\bP^1,\sH^1_{\mathrm{II}},\sH^1_{\mathrm{III}}$,
and $\sH^1_{\mathrm{IV}}$, as already mentioned in Section \ref{sec:intro}.
Then
\begin{gather}
 h^{-1}(\cS_G)=\{\cS_G\}\quad\Leftrightarrow\quad\tilde{h}=\phi_{\tilde{h}}\quad\Leftrightarrow\quad\deg H_{\tilde{h}}=0.\label{eq:goodred}
\end{gather}

\begin{fact}[Rivera-Letelier {\cite{Juan03}, and see also \cite[Corollary 9.27]{BR10}}]\label{th:constantreduction}
$\deg(\phi_{\tilde{h}})>0$ if and only if $h(\cS_G)=\cS_G$. Moreover,
\begin{gather}
 \phi_{\tilde{h}}\equiv\tilde{z}\text{ for some }z\in\bP^1
\quad\Rightarrow\quad 
 \widevec{\cS_Gh(\cS_G)}=\widevec{\cS_Gz}.
\label{eq:constantdirection}
\end{gather}
\end{fact}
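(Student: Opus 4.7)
The plan is to reduce both parts to elementary calculations with the Gauss norm $[\cS_G](p)=\max_i|c_i|$ for $p=\sum c_i z^i\in K[z]$, extended multiplicatively to $K(z)$, together with the characterization of the image $h(\cS_G)$ as the type II point whose pullback seminorm satisfies $[h(\cS_G)](f)=[\cS_G](f\circ h)$ for all $f\in K(z)$. Throughout, I work with the normalized representation $h=P/Q$ with $\max\{|a_j|,|b_\ell|\}=1$, viewing $P$ and $Q$ homogeneously of degree $d=\deg h$, so that ``$\phi_{\tilde h}$ is constant'' is equivalent to saying the homogeneous reductions of $P$ and $Q$ are proportional in $k[X_0,X_1]_d$ (possibly with one being zero).

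For the direct direction of the first claim, namely $h(\cS_G)=\cS_G\Rightarrow\deg\phi_{\tilde h}>0$, I would evaluate the seminorm identity at $f(z)=z$ to obtain $[\cS_G](P)/[\cS_G](Q)=[\cS_G](h)=[h(\cS_G)](z)=1$, so combined with the normalization this forces $[\cS_G](P)=[\cS_G](Q)=1$ and hence both $\tilde P$ and $\tilde Q$ are nonzero. If $\phi_{\tilde h}$ were constant, there would exist $\alpha\in\cO_K$ with $\tilde P=\tilde\alpha\tilde Q$, giving $[\cS_G](P-\alpha Q)<1$ and therefore $[h(\cS_G)](z-\alpha)=[\cS_G](P-\alpha Q)/[\cS_G](Q)<1$, contradicting $[h(\cS_G)](z-\alpha)=[\cS_G](z-\alpha)=1$. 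Conversely, assuming $\deg\phi_{\tilde h}>0$ both $\tilde P$ and $\tilde Q$ are nonzero, so $[\cS_G](P)=[\cS_G](Q)=1$. A case analysis on $|c|>1$, $|c|<1$, and $|c|=1$ establishes $[\cS_G](P-cQ)=\max(1,|c|)$, where in the delicate case $|c|=1$ the non-constancy of $\phi_{\tilde h}$ rules out $\tilde P=\tilde c\tilde Q$. Together with $[h(\cS_G)](1/z)=[\cS_G](Q)/[\cS_G](P)=1=[\cS_G](1/z)$, this yields agreement of $[h(\cS_G)]$ and $[\cS_G]$ on every linear test function, which pins down $h(\cS_G)$ as the type II point corresponding to $B(0,1)$, namely $\cS_G$.

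For the displayed implication \eqref{eq:constantdirection}, assume $\phi_{\tilde h}\equiv\tilde z$ for some $z\in\bP^1$. Pre-composing with a M\"obius transformation in $\PGL(2,\cO_K)$, which fixes $\cS_G$ and acts on $T_{\cS_G}\sP^1\cong\bP^1(k)$ via its reduction in $\PGL(2,k)$ by \eqref{directionreduction}, I reduce to $z=0$ or $z=\infty$. In the case $z=0$, constancy $\phi_{\tilde h}\equiv\tilde 0$ forces $\tilde P\equiv 0$ with $\tilde Q\not\equiv 0$, so $[\cS_G](P)<[\cS_G](Q)=1$ and consequently $[h(\cS_G)](z)=[\cS_G](P)<1=[\cS_G](z)$. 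This shows $h(\cS_G)$ corresponds to a $K$-closed disk of diameter $<1$ lying inside $B(0,1)$, hence $h(\cS_G)\in U_{\overrightarrow{\cS_G 0}}$, which means $\overrightarrow{\cS_G h(\cS_G)}=\overrightarrow{\cS_G 0}$. The case $z=\infty$ is symmetric upon swapping the roles of $P$ and $Q$. Undoing the M\"obius change of coordinates recovers $\overrightarrow{\cS_G h(\cS_G)}=\overrightarrow{\cS_G z}$.

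The main obstacle is the case $|c|=1$ in the converse of the first claim, where the non-constancy hypothesis on $\phi_{\tilde h}$ must be invoked at exactly the right moment to exclude the reduction-level identity $\tilde P=\tilde c\tilde Q$ for every unit $c$. A secondary technical point is the justification that coincidence of the pullback seminorm $[h(\cS_G)]$ with the Gauss norm $[\cS_G]$ on the generating family $\{z-c,1/z\}_{c\in K}$ is enough to conclude equality of points in $\sP^1$, which follows from the standard identification of type II points with multiplicative seminorms determined by their values on linear polynomials.
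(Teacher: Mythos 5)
Your approach via pullback seminorms is a reasonable self-contained route, whereas the paper simply cites Rivera-Letelier and Baker--Rumely for this Fact and gives no proof of its own. The first half of your argument is correct: the forward direction by lifting the putative constant $\tilde\alpha$ to $\alpha\in\cO_K^*$ and playing $[h(\cS_G)](z-\alpha)=[\cS_G](P-\alpha Q)<1$ against $[\cS_G](z-\alpha)=1$, and the converse by showing $[\cS_G](P-cQ)=\max\{1,|c|\}$ for every $c\in K$ (with the case $|c|=1$ ruled out by non-constancy of $\phi_{\tilde h}$) and then pinning down the finite Berkovich point $h(\cS_G)$ by its values on the linear polynomials, are both sound.

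However, the normalization step in the second part goes the wrong way. Pre-composing $h$ with $M\in\PGL(2,\cO_K)$ produces $h\circ M$, for which $(h\circ M)(\cS_G)=h(M(\cS_G))=h(\cS_G)$ and $\phi_{\widetilde{h\circ M}}=\phi_{\tilde h}\circ\phi_{\tilde M}$; a constant $\phi_{\tilde h}\equiv\tilde z$ followed on the inside by $\phi_{\tilde M}$ is still $\equiv\tilde z$, so this move changes neither the image $h(\cS_G)$ nor the constant value of the reduction, and does not bring $z$ to $0$ or $\infty$. What you need is to \emph{post}-compose: $\phi_{\widetilde{M\circ h}}=\phi_{\tilde M}\circ\phi_{\tilde h}\equiv\phi_{\tilde M}(\tilde z)$, which can be normalized to $\tilde 0$ or $\tilde\infty$ since $\phi_{\tilde M}=\tilde M$ runs over all of $\PGL(2,k)$ as $M$ runs over $\PGL(2,\cO_K)$, while $(M\circ h)(\cS_G)=M(h(\cS_G))$ gives $\overrightarrow{\cS_G (M\circ h)(\cS_G)}=M_*\overrightarrow{\cS_G h(\cS_G)}$, and $M_*$ acts on $T_{\cS_G}\sP^1\cong\bP^1(k)$ exactly as $\tilde M$ under \eqref{directionreduction}, so the conclusion untranslates correctly. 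With that one-word fix the rest of your computation for $z=0$ (namely $\tilde P=0$, $\tilde Q\neq 0$, $[h(\cS_G)](z)=[\cS_G](P)<1$, hence $h(\cS_G)\in U_{\overrightarrow{\cS_G 0}}$) and the symmetric case $z=\infty$ go through and the argument is complete; alternatively, the normalization can be avoided altogether by checking $[\cS_G](P-z_0Q)<1$ directly for a representative $z_0\in\cO_K$ of $\tilde z$ when $\tilde z\neq\tilde\infty$, and $[h(\cS_G)](z)>1$ when $\tilde z=\tilde\infty$.
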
 

\begin{fact}\label{th:Mobius}
 The group $\PGL(2,K)$ of M\"obius transformations on $\bP^1$
 acts transitively on $\sH^1_{\mathrm{II}}$, and
 $\PGL(2,\cO_K)$ is the stabilizer subgroup of $\cS_G$ in $\PGL(2,K)$.
\end{fact}

From now on, suppose that $\deg h>0$.

\subsection{The tangent maps and the directional/surplus local degrees of rational functions}
For the details on this and the next subsections, 
see Rivera-Letelier \cite{Juan05,Juan03}; see also Jonsson \cite[\S4.5]{Jonsson15} for an algebraic treatment. 

For every $\cS\in\sP^1$, 
the tangent map $h_*=(h_*)_{\cS}:T_{\cS}\sP^1\to T_{h(\cS)}\sP^1$ 
of $h$ at $\cS$ is defined so that 
for every $\vec{v}=\widevec{\cS\cS'}\in T_{\cS}\sP^1$, if $\cS'$ is close enough to $\cS$, then
$h$ maps the interval $[\cS,\cS']$ onto the interval $[h(\cS),h(\cS')]$ 
homeomorphically, and
\begin{gather*}
 h_*(\vec{v})=\widevec{h(\cS)h(\cS')}.
\end{gather*}
Moreover, 
for every $\cS\in\sH^1_{\mathrm{II}}$ and every $\vec{v}\in T_{\cS}\sP^1$, 
there is the directional local degree $m_{\vec{v}}(h)\in\bN$ 
(indeed $\in\{1,\ldots,\deg_{\cS}(h)\}$) of $h$ 
on $U_{\vec{v}}$ 
such that choosing any $A,B\in\PGL(2,K)$ satisfying 
$B^{-1}(\cS)=A(h(\cS))=\cS_G$ (so $\deg(\widetilde{A\circ h\circ B})>0$
by Fact \ref{th:constantreduction})
and writing $(B^{-1})_*(\vec{v})=\widevec{\cS_Gz}$ and
$A_*(h_*(\vec{v}))=\widevec{\cS_Gw}$
by some $z,w\in\bP^1$, we have
\begin{gather}
 \phi_{\widetilde{A\circ h\circ B}}(\tilde{z})=\tilde{w}\quad\text{and}\label{eq:tangentreduct} \\
 m_{\vec{v}}(h)
=\deg_{\tilde{z}}\bigl(\phi_{\widetilde{A\circ h\circ B}}\bigr).\label{directdegdef}
\end{gather}
For every $\cS\in\sP^1\setminus\sH^1_{\mathrm{II}}$
and every $\vec{v}\in T_{\cS}\sP^1$, we set $m_{\vec{v}}(h):=\deg_{\cS}(h)$.

\begin{fact}[decomposition of the local degree {\cite[Proposition 3.5]{Juan05}}]\label{th:directionalpullback}
For every $\cS\in\sP^1$, also using the notation in the above paragraph
if $\cS\in\sH^1_{\mathrm{II}}$,
we have
\begin{gather}
(1\le)\deg_{\cS}(h)
=\sum_{\vec{v}\in T_{\cS}\sP^1:h_*(\vec{v})=\vec{w}}m_{\vec{v}}(h)\Bigl(=\deg\bigl(\phi_{\widetilde{A\circ h\circ B}}\bigr)\text{ if }\cS\in\sH^1_{\mathrm{II}}\Bigr)\quad
\text{for any }\vec{w}\in T_{h(\cS)}\sP^1;
\label{eq:totallocaldegree}
\end{gather}
in particular, $h_*:T_{\cS}\sP^1\to T_{h(\cS)}\sP^1$ is surjective.
\end{fact}

\begin{fact}[a non-archimedean argument principle {\cite[Lemma 2.1]{Juan03}}]\label{th:surplus}
For every $\cS\in\sP^1$ and every $\vec{v}\in T_{\cS}\sP^1$,
there is the surplus local degree 
$s_{\vec{v}}(h)\in
\{0,1,\ldots,\deg_{\cS}(h)\}$ 
of $h$ on $U_{\vec{v}}$ such that
for every $\cS'\in\sP^1\setminus\{h(\cS)\}$,
\begin{gather}
 (h^*\delta_{\cS'})(U_{\vec{v}})=
\begin{cases}
 m_{\vec{v}}(h)+s_{\vec{v}}(h) 
&\text{if }U_{h_*(\vec{v})}\ni \cS',\\
s_{\vec{v}}(h) & \text{otherwise};
\end{cases}\label{eq:argumentnonarchi}
\end{gather}
moreover, $h(U_{\vec{v}})$ is either $\sP^1$ 
or $U_{h_*(\vec{v})}$, the latter possibility in which
is the case if and only if $s_{\vec{v}}(h)=0$. 
For every $\cS\in\sP^1$, 
$s_{\vec{v}}(h)>0$ for at most finitely many $\vec{v}\in T_{\cS}\sP^1$, and then 
\begin{gather}
 \sum_{\vec{v}\in T_{\cS}\sP^1}s_{\vec{v}}(h)=
\deg h-\deg_{\cS}(h)\label{eq:totalsurplus}
\end{gather}
since fixing any $\cS'\in\sP^1\setminus\{h(\cS)\}$, we have 
\begin{multline*}
 \deg h=(h^*\delta_{\cS'})(\sP^1)=(h^*\delta_{\cS'})(\sP^1\setminus\{\cS\})\\
=\sum_{\vec{v}\in T_{\cS}\sP^1:h_*(\vec{v})=\widevec{\cS\cS'}}m_{\vec{v}}(h)+
\sum_{\vec{v}\in T_{\cS}\sP^1}s_{\vec{v}}(h)
=\deg_{\cS}(h)+
\sum_{\vec{v}\in T_{\cS}\sP^1}s_{\vec{v}}(h).
\end{multline*}
\end{fact}

\begin{fact}\label{th:Mobiustangent}
In the case that $h\in\PGL(2,K)$, the tangent map
$h_*:T_{\cS}\sP^1\to T_{h(\cS)}\sP^1$ is bijective, and
for every $\cS\in\sP^1$ and every $\vec{v}\in T_{\cS}\sP^1$,
$h(U_{\vec{v}})=U_{h_*(\vec{v})}$.
\end{fact}

\begin{fact}[Faber {\cite[Lemma 3.17]{Faber13topologyI}}]\label{th:surplusalgclosed}
For every $\cS\in\sH^1_{\mathrm{II}}$ and
every $\vec{v}\in T_{\cS}\sP^1$,
choosing any such $A,B\in\PGL(2,K)$ that $B^{-1}(\cS)=A(h(\cS))=\cS_G$
and any such $z\in\bP^1$ that
$(B^{-1})_*(\vec{v})=\widevec{\cS_Gz}$ 
(as in the paragraph before Fact \ref{th:directionalpullback}), we have
\begin{gather}
 s_{\vec{v}}(h)
\begin{cases}
 =
\ord_{\zeta=\tilde{z}}\bigl[H_{\widetilde{A\circ h\circ B}}=0\bigr] &\text{if }\deg H_{\widetilde{A\circ h\circ B}}>0,\\
\equiv 0 & \text{otherwise}.
\end{cases}\label{eq:surplusfaber}
\end{gather} 
\end{fact}

\subsection{The hyperbolic metric $\rho$ on $\sH^1$ and the piecewise affine action of $h$ on $(\sH^1,\rho)$}
The hyperbolic metric $\rho$ on $\sH^1$,
which is defined so that
\begin{gather*}
 \rho\bigl(\cS_1,\cS_2\bigr)=\log\biggl(\frac{\diam_{|\cdot|}\cS_2}{\diam_{|\cdot|}\cS_1}\biggr)\quad\text{ if }\cS_1\preceq\cS_2,
\end{gather*}
would be used at some part in the proof of Theorem \ref{th:computation}.
The topology on $(\sH^1,\rho)$ is finer than the relative topology on $\sH^1$
from $\sP^1$.

\begin{fact}[{\cite[Proposition 3.5]{Juan05}}]
 For every $\cS\in\sP^1$ and every 
 $\vec{v}=\widevec{\cS\cS'}\in T_{\cS}\sP^1$,
 if $\cS'$ is close enough to $\cS$, then for every $\cS''\in(\cS,\cS']$,
 \begin{gather}
 \rho\bigl(h(\cS''),h(\cS')\bigr)
 =m_{\vec{v}}(h)\cdot\rho(\cS'',\cS'),\label{eq:Lipschitz}
 \end{gather}
 which still holds for $\cS''\in[\cS,\cS']$ if $\cS\in\sH^1$.
\end{fact}

\subsection{Quantized local degrees and quantized pullbacks}\label{sec:quantized}
Let us precisely define the quantized local degrees $m_{V,U}(h)$, 
mentioned in Subsection \ref{sec:skeltal}, 
in terms of the (directional/surplus) local degrees of $h$, 
and then also (re-)define the quantized pullback operator 
$h_G^*:M(\Gamma_h)\to M(\Gamma_G)$. 
Recall
\begin{gather*}
 \Gamma_G:=\{\cS_G\}\quad\text{and}\quad \Gamma_h:=\{\cS_G,h(\cS_G)\}
\quad\text{in }\sH^1_{\mathrm{II}}.
\end{gather*}

\begin{definition}[the quantized local degree]\label{th:multfactor}
 For every $U_{\vec{v}}\in S(\Gamma_G)\setminus\{\{\cS_G\}\}=T_{\cS_G}\sP^1$ 
and every $V\in S(\Gamma_h)$, 
set
\begin{align*}
  m_{V,U_{\vec{v}}}(h):=&
 \begin{cases}
 m_{\vec{v}}(h)+s_{\vec{v}}(h) &\text{if }V\subset U_{h_*(\vec{v})},\\
 s_{\vec{v}}(h) &\text{if }V\cap U_{h_*(\vec{v})}=\emptyset
 \end{cases}\\
 \underset{\eqref{eq:argumentnonarchi}}{=}&(h^*\delta_{\cS'})(U_{\vec{v}})\text{ for any }\cS'\in V
\text{ if }V\in S(\Gamma_h)\setminus\{\{h(\cS_G)\}\},
\end{align*}
 and for every $V\in S(\Gamma_h)$, set 
 \begin{align*}
 m_{V,\{\cS_G\}}(h)
 :=&\begin{cases}
   \deg_{\cS_G}(h) & \text{if }V=\{h(\cS_G)\},\\
   0 & \text{if }V\in S(\Gamma_h)\setminus\{\{h(\cS_G)\}\}
  \end{cases}\\
  \underset{\eqref{eq:pullbackdirac}}{=}&(h^*\delta_{\cS'})(\{\cS_G\})\quad\text{for any }\cS'\in V.
 \end{align*}
\end{definition}
\begin{fact}
The fundamental equality
 \begin{gather}
 \sum_{U\in S(\Gamma_G)}m_{V,U}(h)=\deg h
\quad\text{for any }V\in S(\Gamma_h) 
\label{eq:totalfactor}
 \end{gather}
holds; indeed, for every $V\in S(\Gamma_h)\setminus\{\{h(\cS_G)\}\}$, 
there is a unique $\vec{w}\in T_{h(\cS_G)}\sP^1$
satisfying $V\subset U_{\vec{w}}$, and then
 \begin{multline*}
 \sum_{U\in S(\Gamma_G)}m_{V,U}(h)
 =\sum_{\vec{v}\in T_{\cS_G}\sP^1:h_*(\vec{v})=\vec{w}}m_{\vec{v}}(h)
 +\sum_{\vec{v}\in T_{\cS_G}\sP^1}s_{\vec{v}}(h)+0\\
 \underset{\eqref{eq:totallocaldegree}\&\eqref{eq:totalsurplus}}{=}\deg_{\cS_G}(h)+\bigl(\deg h-\deg_{\cS_G}(h)\bigr)
 =\deg h,
 \end{multline*}
 and similarly, 
 \begin{multline*}
 \sum_{U\in S(\Gamma_G)}m_{\{h(\cS_G)\},U}(h)
 =\sum_{\vec{v}\in T_{\cS_G}\sP^1}s_{\vec{v}}(h)+\deg_{\cS_G}(h)\\
 \underset{\eqref{eq:totalsurplus}}{=}\bigl(\deg h-\deg_{\cS_G}(h)\bigr)+\deg_{\cS_G}(h)=\deg h.
 \end{multline*}
\end{fact}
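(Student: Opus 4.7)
The plan is to verify the fundamental equality by splitting on the type of $V\in S(\Gamma_h)$ and reducing each case to a previously established total-degree identity. The partition $S(\Gamma_h)$ has three kinds of atoms: the singleton $\{\cS_G\}$, the singleton $\{h(\cS_G)\}$, and the components of $\sP^1\setminus\Gamma_h$. The key observation is that for every $V$ except $V=\{h(\cS_G)\}$, the second line of Definition \ref{th:multfactor} identifies $m_{V,U}(h)$ with $(h^*\delta_{\cS'})(U)$ for any chosen test point $\cS'\in V$, after which the equality reduces to the total mass identity $(h^*\delta_{\cS'})(\sP^1)=\deg h$ recorded in \eqref{eq:pullbackdirac}.

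The first step, then, is to dispose of every $V\in S(\Gamma_h)\setminus\{\{h(\cS_G)\}\}$ by fixing any $\cS'\in V$ (so $\cS'\neq h(\cS_G)$, which legitimizes the use of \eqref{eq:argumentnonarchi} hidden in Definition \ref{th:multfactor}), applying the identification above to each $U\in S(\Gamma_G)$, and summing over the partition $S(\Gamma_G)$ of $\sP^1$. The second step is to handle $V=\{h(\cS_G)\}$ directly from the first line of Definition \ref{th:multfactor}: since $h(\cS_G)$ is the base point of every $U_{\overrightarrow{w}}$ with $\overrightarrow{w}\in T_{h(\cS_G)}\sP^1$, one has $V\cap U_{h_*(\overrightarrow{v})}=\emptyset$ for every $\overrightarrow{v}\in T_{\cS_G}\sP^1$, whence $m_{V,U_{\overrightarrow{v}}}(h)=s_{\overrightarrow{v}}(h)$, while $m_{V,\{\cS_G\}}(h)=\deg_{\cS_G}(h)$ by convention. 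Summing over $\overrightarrow{v}\in T_{\cS_G}\sP^1$ and adding the contribution at $\{\cS_G\}$ yields $\sum_{\overrightarrow{v}}s_{\overrightarrow{v}}(h)+\deg_{\cS_G}(h)=\deg h$, which is exactly the total surplus formula \eqref{eq:totalsurplus}.

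The main obstacle is precisely this second case: the convenient pushforward interpretation $m_{V,U}(h)=(h^*\delta_{\cS'})(U)$ is unavailable when $V=\{h(\cS_G)\}$, since the non-archimedean argument principle \eqref{eq:argumentnonarchi} excludes the choice $\cS'=h(\cS_G)$ (i.e., $\cS'=h(\cS)$ with $\cS=\cS_G$). One must therefore bypass \eqref{eq:argumentnonarchi} and invoke directly the decomposition of $\deg h$ into the local degree $\deg_{\cS_G}(h)$ at $\cS_G$ and the total surplus $\sum_{\overrightarrow{v}}s_{\overrightarrow{v}}(h)$ about $\cS_G$.
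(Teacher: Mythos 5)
Your proposal is correct. For $V=\{h(\cS_G)\}$ you reproduce the paper's argument exactly: since $h(\cS_G)$ is the base point of every $U_{\overrightarrow{w}}$ with $\overrightarrow{w}\in T_{h(\cS_G)}\sP^1$, one has $\{h(\cS_G)\}\cap U_{h_*(\overrightarrow{v})}=\emptyset$ for all $\overrightarrow{v}\in T_{\cS_G}\sP^1$, hence $m_{\{h(\cS_G)\},U_{\overrightarrow{v}}}(h)=s_{\overrightarrow{v}}(h)$; adding $m_{\{h(\cS_G)\},\{\cS_G\}}(h)=\deg_{\cS_G}(h)$ and applying \eqref{eq:totalsurplus} gives $\deg h$, just as in the paper. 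For $V\in S(\Gamma_h)\setminus\{\{h(\cS_G)\}\}$ your route is a mild shortcut: instead of expanding $\sum_{U}m_{V,U}(h)$ into directional plus surplus contributions and invoking \eqref{eq:totallocaldegree} and \eqref{eq:totalsurplus} (as the paper does), you cite the alternative description $m_{V,U}(h)=(h^*\delta_{\cS'})(U)$ for any $\cS'\in V$ (already built into Definition \ref{th:multfactor}) and simply sum over the partition $S(\Gamma_G)$ of $\sP^1$ to land on $(h^*\delta_{\cS'})(\sP^1)=\deg h$ from \eqref{eq:pullbackdirac}. The two computations are definitionally equivalent --- the paper's display is essentially the proof that the total mass of $h^*\delta_{\cS'}$ equals $\deg h$ --- so this is a stylistic simplification rather than a different method; the paper's form has the advantage of exhibiting \eqref{eq:totalsurplus} doing the work uniformly in both cases.
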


 The quantized pushforward operator $h_{G,*}$ from the space of
 measurable functions on $\sP^1/S(\Gamma_G)$ to
 that of measurable functions on $\sP^1/S(\Gamma_h)$ 
 is defined so that for every measurable function
 $\psi$ on $\sP^1/S(\Gamma_G)$,
 the measurable function $h_{G,*}\psi$ on $\sP^1/S(\Gamma_h)$
 satisfies
\begin{gather*}
 (h_{G,*}\psi)(V)=\sum_{U\in S(\Gamma_h)}m_{V,U}(h)\psi(U)
\quad\text{for any }V\in S(\Gamma_h)\quad\text{or equivalently}\\
 (\pi_{\Gamma_h})^*(h_{G,*}\psi)
  \equiv\sum_{U\in S(\Gamma_G)}m_{V,U}(h)\cdot((\pi_{\Gamma_G})^*\psi)|U
  \quad\text{on each }V\in S(\Gamma_h),
\end{gather*}
so in particular
\begin{gather}
(\pi_{\Gamma_h})^*(h_{G,*}\psi) =\sum_{\vec{v}\in T_{\cS_G}\sP^1}(h^*\delta_{\,\cdot})(U_{\vec{v}})\cdot((\pi_{\Gamma_G})^*\psi)|U_{\vec{v}}
 \quad\text{on }\sP^1\setminus\{h(\cS_G)\}.\label{eq:pushfactor}
\end{gather}
 The quantized pullback operator $h_G^*:M(\Gamma_h)\to M(\Gamma_G)$ 
 is the transpose of this quantized pushforward operator $h_{G,*}$
 so in particular that
 for every $\omega\in M(\Gamma_h)$,
 the measure $h_G^*\omega\in M(\Gamma_G)$ satisfies
 \begin{align}
\notag \bigl(h_G^*\omega\bigr)(\{U\}) 
  =&\left\langle 1_{\{U\}},h_G^*\omega\right\rangle
  =\left\langle h_{G,*}(1_{\{U\}}),\omega\right\rangle\\
\notag =&\int_{\sP^1/S(\Gamma_h)}\Bigl(\sum_{W\in S(\Gamma_G)}m_{V,W}(h)
  \cdot 1_{\{U\}}(W)\Bigr)\omega(V)\\
 =&\int_{\sP^1/S(\Gamma_h)}m_{V,U}(h)\omega(V)
\quad\text{for any }U\in S(\Gamma_G).\label{eq:pullbackdefining}
 \end{align}

\section{Degenerating balanced property for degenerating 
weak limit points of the maximal entropy measures on $\bP^1(\bC)$}
\label{sec:quantizedbalanced}

We follow the presentation in \cite[\S 2.1-\S2.4]{DF14}.

Fixing $r\in(0,1)$ (e.g.\ $r=e^{-1}$) once and for all, 
the field $\bC((t))$ of Laurent series around $t=0$ over $\bC$
is equipped with the non-trivial and non-archimedean absolute value
\begin{gather}
 |x|_r=r^{\min\{n\in\bZ:\,a_n\neq 0\}}\label{eq:normLaurent}
\end{gather}
for $x(t)=\sum_{n\in\bZ}a_nt^n\in\bC((t))$
(under the convention that $\min\emptyset=+\infty$ and $r^{+\infty}=0$),
which restricts to the trivial absolute value on $\bC$.

An algebraic closure $\overline{\bC((t))}$ of $\bC((t))$ 
is the field of Puiseux series around $t=0$ over $\bC$, 
$|\cdot|_r$ extends to $\overline{\bC((t))}$ as an absolute value,
and 
the completion $\bL$ of $\overline{\bC((t))}$
is the field of formal Puiseux series around $t=0$ over $\bC$ and 
is still algebraically closed. We note that $\cO(\bD)[t^{-1}]\subset\bC((t))$,
\begin{gather*}
\bC\subset\cO(\bD)\subset\cO_{\bC((t))}
=\biggl\{\sum_{n\in\bZ}a_nt^n\in\bC((t)):a_n=0\text{ if }n<0\biggr\}=\bC[[t]],\\
 \cM_{\bC((t))}=t\cdot\cO_{\bC((t))},\\
 k_{\bL}=k_{\bC((t))}=\bC\text{ (as fields),}\quad\text{and}\\
 T_{\cS_G}\sP^1(\bL)\cong\bP^1(k_{\bL})=\bP^1(\bC)\quad
 (\text{the bijection
 is the canonical one in \eqref{directionreduction}}).
\end{gather*}

\begin{notation}
 Let $M(\bP^1(\bC))$ be
 the space of all complex Radon measures on $\bP^1(\bC)=\bC\cup\{\infty\}$.
 The pullback 
of each $\mu\in M(\bP^1(\bC))$ under a
 rational function $R\in\bC(z)$ on $\bP^1(\bC)$ of degree $>0$ is 
$R^*\mu:=\int_{\bP^1(\bC)}(\sum_{w\in R^{-1}(z)}(\deg_wR)\delta_w)\mu(z)$ 
on $\bP^1(\bC)$, 
where for each $z\in\bP^1(\bC)$, $\delta_z$ is the Dirac measure at $z$ on $\bP^1(\bC)$; if $R$ is constant, then $R^*\mu:=0$ by convention. Also set
\begin{align*}
M^1(\bP^1(\bC)):=&\bigl\{\mu\in M(\bP^1(\bC)):\mu\ge 0\text{ and }\mu(\bP^1(\bC))=1\bigr\}\quad\text{and}\\
M^1(\bP^1(\bC))^{\dag}:=&\bigl\{\mu\in M^1(\bP^1(\bC)):\mu\text{ is purely atomic}\}.
\end{align*}
\end{notation}

\begin{fact}[the maximal entropy measure on $\bP^1(\bC)$
{\cite{Brolin,Lyubich83,FLM83}}]\label{maxentropy}
For a rational function $R\in\bC(z)$ on $\bP^1(\bC)$ of degree $>1$,
the equilibrium (or canonical, and indeed the unique maximal entropy)
measure $\mu_R$ of $R$ on $\bP^1(\bC)$ is the unique
$\mu\in M^1(\bP^1(\bC))$ satisfying $R^*\mu=(\deg R)\mu$
on $\bP^1(\bC)$ and $\mu(E(R))=0$, where $E(R):=\{a\in\bP^1(\bC):\#\bigcup_{n\in\bN}R^{-n}(a)<\infty\}$.
Then for every $n\in\bN$, $\mu_{R^n}=\mu_R$ on $\bP^1(\bC)$ and $E(R^n)=E(R)$. The measure $\mu_R$ is $\PGL(2,\bC)$-equivariant in that for every M\"obius transformation $M\in\PGL(2,\bC)$ on $\bP^1(\bC)$, $\mu_{M\circ R\circ M^{-1}}=M_*\mu_R$ on $\bP^1(\bC)$.

When $R\in\bC[z]$ or equivalently $R(\infty)=\infty\in E(R)$, 
$\mu_R$ is supported by $\partial(K_R)$,
 where the filled-in Julia set $K_R:=\{z\in\bC:\limsup_{n\to\infty}|R^n(z)|<+\infty\}$ of $R$ is a compact subset in $\bC$.
\end{fact}

Let $h\in(\cO(\bD)[t^{-1}])(z)(\subset\bL(z))$ 
be a meromorphic family 
of rational functions on $\bP^1(\bC)$, and let us regard
$\tilde{h}=H_{\tilde{h}}\phi_{\tilde{h}}\in\bP^{2(\deg h)+1}(k_{\bL})$ 
as a point in $\bP^{2(\deg h)+1}(\bC)$,
$\phi_{\tilde{h}}$ as a rational function on $\bP^1(\bC)$
of degree $\deg h-\deg H_{\tilde{h}}$, 
and the effective divisor $[H_{\tilde{h}}=0]$ on $\bP^1(k_{\bL})$
as that on $\bP^1(\bC)$ 
and in turn also as the Radon measure $\sum_{z\in\bP^1(\bC)}(\ord_z[H_{\tilde{h}}=0])\delta_z$ on $\bP^1(\bC)$, under $k_{\bL}=\bC$ as fields. Then
\begin{gather}
\lim_{t\to 0}h_t=\phi_{\tilde{h}}
\quad\text{locally uniformly on }\bP^1(\bC)\setminus\bigl(\supp[H_{\tilde{h}}=0]\bigr).\label{eq:locunifoutside} 
\end{gather}

\begin{definition}
 For every $\mu\in M^1(\bP^1(\bC))$,
 the (possibly degenerating) 
 pullback $\tilde{h}^*\mu\in M(\bP^1(\bC))$
 of $\mu$ under $\tilde{h}$ is defined by
 \begin{gather}
 \tilde{h}^*\mu
 :=(\phi_{\tilde{h}})^*\mu+[H_{\tilde{h}}=0]\quad\text{on }\bP^1(\bC),\label{eq:degenpullback}
 \end{gather} 
 still satisfying $\bigl(\tilde{h}^*\mu\bigr)(\bP^1(\bC))=\deg h$.
\end{definition}

Recall Fact \ref{th:constantreduction}.
The following target rescaling theorem is a special case of
\cite[Lemma 3.7]{Kiwi15} (see also {\cite[Lemma 2.1]{DF14}}).

\begin{theorem}
\label{th:postfamily}
For every meromorphic family $f\in(\cO(\bD)[t^{-1}])(z)(\subset\bL(z))$ of rational functions
on $\bP^1(\bC)$ of degree $>1$, 
there is a meromorphic family $A\in(\cO(\bD)[t^{-1}])(z)$ 
of M\"obius transformations on $\bP^1(\bC)$
such that $(A\circ f)(\cS_G)=\cS_G$ in $\sP^1(\bL)$.
Such 
a family
$A$ is unique up to a postcomposition to $A$ of any meromorphic family
$B\in(\cO(\bD)[t^{-1}])(z)$ of M\"obius transformations on $\bP^1(\bC)$
satisfying $\tilde{B}=\phi_{\tilde{B}}\in\PGL(2,\bC)$.
\end{theorem}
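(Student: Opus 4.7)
The plan is to produce $A$ by identifying the type II point $\cS:=f(\cS_G)\in\sP^1(\bL)$ as an $\bL$-closed disk $B(a(t),|c(t)|_r)$ and setting $A(z):=(z-a(t))/c(t)$. Since the extended action of $f$ on $\sP^1(\bL)$ preserves types and $\cS_G\in\sH^1_{\mathrm{II}}$, we have $\cS\in\sH^1_{\mathrm{II}}$, so such a disk representation exists over $\bL$. The real content is to choose $a(t)\in\cO(\bD)[t^{-1}]$ and $c(t)\in\cO(\bD)[t^{-1}]^{*}$, after possibly shrinking $\bD$ around $t=0$.

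For existence, I would first normalize a representation $f=P/Q$ with $P,Q\in\cO(\bD)[t^{-1}][z]$ of joint Gauss norm $1$ by multiplying both by a suitable integer power of $t$; the reduction $\tilde{f}=H_{\tilde{f}}\phi_{\tilde{f}}$ is then well defined over $k_{\bL}=\bC$. If $\deg\phi_{\tilde{f}}>0$, then by Fact \ref{th:constantreduction} one has $f(\cS_G)=\cS_G$ and $A:=\mathrm{id}$ works. Otherwise $\phi_{\tilde{f}}\equiv c_0$ is a constant in $\bP^1(\bC)$, and by \eqref{eq:constantdirection} the point $\cS$ lies in the direction $\overrightarrow{\cS_G c_0}$. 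Post-composing $f$ with a constant M\"obius $B_0\in\PGL(2,\bC)\subset\PGL(2,\cO(\bD)[t^{-1}])$ sending $c_0\to\infty$ reduces to the case $\cS=B(0,R)$ with $R>1$ in $|\bL^{*}|$. An explicit Gauss-norm analysis of the normalized $P,Q$, reading off the orders at $t=0$ of a suitable ratio built from their coefficients, then forces $R\in|\bC((t))^{*}|=r^{\bZ}$, say $R=r^{n}$ with $n\in\bZ_{<0}$. Taking $a(t):=0$, $c(t):=t^{n}$, and composing back with the rescaling $B_0$ produces the required $A\in\cO(\bD)[t^{-1}](z)$, after shrinking $\bD$ to eliminate any zero or pole of the coefficients of the resulting $A$ on $\bD^{*}$.

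For uniqueness, if $A'$ is another such M\"obius family, set $B:=A'\circ A^{-1}$; after shrinking $\bD$ so that $\det A$ has no zero on $\bD^{*}$, $B\in\PGL(2,\cO(\bD)[t^{-1}])$, and $B(\cS_G)=A'(f(\cS_G))=\cS_G$. By Fact \ref{th:constantreduction} this gives $\deg\phi_{\tilde{B}}>0$, and combined with $\deg\phi_{\tilde{B}}\le\deg B=1$ it forces $\deg\phi_{\tilde{B}}=1$ and hence $\deg H_{\tilde{B}}=0$, i.e.\ $\tilde{B}=\phi_{\tilde{B}}\in\PGL(2,\bC)$, as required. The delicate step is the existence argument: verifying that the Gauss-norm extraction outputs $R$ (and, in less symmetric setups, a center $a(t)$) in $\cO(\bD)[t^{-1}]$ and not merely in some transcendental or Puiseux extension. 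Galois invariance of $\cS$ under $\mathrm{Gal}(\bL/\bC((t)))$, which fixes both $\cS_G$ and the coefficients of $f$, at least places these quantities in $\bC((t))$; meromorphicity on $\bD$ of $f$'s coefficients then propagates through the finitely many rational and $\max$-type operations needed to read off the center and radius.
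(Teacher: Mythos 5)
The paper does not give its own proof of this statement; it cites it directly from DeMarco--Faber (their Lemma 2.1), so there is no in-paper argument to compare against. Evaluating your argument on its own merits: the overall plan — represent $\cS=f(\cS_G)$ as a disk $B(a(t),|c(t)|_r)$ with $a(t)\in\cO(\bD)[t^{-1}]$ and $c(t)$ a unit, set $A(z)=(z-a(t))/c(t)$, and handle uniqueness via Fact \ref{th:constantreduction} — is sound, and the uniqueness half is complete and correct.

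There is, however, a concrete error in the existence half. After post-composing with a constant M\"obius $B_0\in\PGL(2,\bC)$ sending $c_0\mapsto\infty$, you claim $\cS$ becomes $B(0,R)$. This is false: being in the direction $\overrightarrow{\cS_G\infty}$ only means $\max\{|a|,R\}>1$, and the case $|a|>R$ is genuinely possible. For instance, $g(z)=(z+t^{-1})^2$ has $\phi_{\tilde g}\equiv\infty$ already, yet $g(\cS_G)=B(t^{-2},r^{-1})$, whose center cannot be moved to $0$ since $|t^{-2}|>r^{-1}$. So the reduction to a centered disk does not hold, and the nontrivial center $a(t)$ must be produced; you gesture at this ("in less symmetric setups, a center $a(t)$") but the written argument assumes it away. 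Relatedly, the crucial facts that a valid center lies in $\bC((t))$ and that $R\in r^{\bZ}$ are asserted via "Gauss-norm analysis" and "Galois invariance" without the computation; note also that the Galois group acts trivially on radii of centered disks, so Galois invariance of $\cS$ by itself does not force $R\in r^{\bZ}$. What one actually needs is that $f(\cS_G)$ descends to a type II point of the Berkovich line over $\bC((t))$ (whose value group is $r^{\bZ}$) together with the observation that, once $a\in\bC((t))$ and $R=r^{n}$, the center may be replaced by its Laurent-polynomial truncation to land in $\cO(\bD)[t^{-1}]$ without changing the disk; filling in those two points would complete your proof.
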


Recall also \eqref{eq:goodred}.
The degenerating $f$-balanced property of the pair $\mu$ (the former half in
\eqref{eq:reductionconstant})
is a consequence of \eqref{eq:locunifoutside}
and the complex argument principle. The proof of the
purely atomicness of $\mu$ (the latter half in 
\eqref{eq:reductionconstant}) is more involved.

\begin{theorem}[a consequence of {\cite[Theorems 2.4 and A]{DF14}}]\label{th:balancedgeneral}
Let 
\begin{gather*}
 f\in\bigl(\cO(\bD)[t^{-1}]\bigr)(z)(\subset\bL(z)) 
\end{gather*}
be a meromorphic family 
of rational functions on $\bP^1(\bC)$ of degree $d>1$
satisfying $f^{-1}(\cS_G)\neq\{\cS_G\}$ in $\sP^1(\bL)$, 
let $A\in(\cO(\bD)[t^{-1}])(z)$ be
a meromorphic family  of M\"obius transformations on $\bP^1(\bC)$
such that $(A\circ f)(\cS_G)=\cS_G$, and
let 
\begin{gather*}
 \mu_C=\lim_{j\to\infty}\mu_{f_{t_j}},\quad
\mu_E=\lim_{j\to\infty}(A_{t_j})_*\mu_{f_{t_j}}\in M^1(\bP^1(\bC)) 
\end{gather*}
be weak limit points on $\bP^1(\bC)$ as $t\to 0$ of 
the families
$(\mu_{f_t})_{t\in\bD^*}$ and $((A_t)_*\mu_{f_t})_{t\in\bD^*}$ 
of the unique maximal entropy measures $\mu_{f_t}$
and $(A_t)_*\mu_{f_t}=\mu_{A_t\circ f_t\circ A_t^{-1}}$ on $\bP^1(\bC)$ of $f_t$ and $A_t\circ f_t\circ A_t^{-1}$,
respectively, for some sequence
$(t=t_j)$ in $\bD^*$ tending to $0$ as $j\to\infty$. 
Then
\begin{gather}
(\widetilde{A\circ f})^*\mu_E=d\cdot\mu_C\quad\text{on }\bP^1(\bC)
\quad\text{and}\quad
\mu:=(\mu_C,\mu_E)\in\bigl(M^1(\bP^1(\bC))^\dag\bigr)^2.
\label{eq:reductionconstant}
\end{gather}
\end{theorem}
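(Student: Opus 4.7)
The plan is to split the proof into two parts: the balanced identity, which I would derive from the $f_t$-balanced property of $\mu_{f_t}$ via the complex argument principle, and the purely atomic property $\mu_C,\mu_E\in M^1(\bP^1(\bC))^\dag$, which is the deeper content of \cite[Theorem A]{DF14} and which I approach by iterating the construction and tracking degenerations of preimages through the Berkovich dynamics.

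For the balanced identity, I start from $f_t^*\mu_{f_t}=d\cdot\mu_{f_t}$ on $\bP^1(\bC)$. Since each $A_t$ is a M\"obius transformation, $(A_t)^*(A_t)_*=\mathrm{id}$ on $M(\bP^1(\bC))$, so
\begin{equation*}
(A_t\circ f_t)^*\bigl((A_t)_*\mu_{f_t}\bigr)=f_t^*A_t^*(A_t)_*\mu_{f_t}=f_t^*\mu_{f_t}=d\cdot\mu_{f_t}.
\end{equation*}
Writing $g_j:=A_{t_j}\circ f_{t_j}$ and $\nu_j:=(A_{t_j})_*\mu_{f_{t_j}}$, the right-hand side along $(t_j)$ converges weakly to $d\cdot\mu_C$, so the claim reduces to showing that $g_j^*\nu_j$ tends weakly to $(\widetilde{A\circ f})^*\mu_E$. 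The factorization $\widetilde{A\circ f}=H_{\widetilde{A\circ f}}\cdot\phi_{\widetilde{A\circ f}}$, together with the standard degeneration fact that $g_j\to\phi_{\widetilde{A\circ f}}$ locally uniformly on $\bP^1(\bC)\setminus\supp[H_{\widetilde{A\circ f}}=0]$, yields via a Hurwitz-type application of the complex argument principle the weak convergence $g_j^*\delta_z\to(\phi_{\widetilde{A\circ f}})^*\delta_z+[H_{\widetilde{A\circ f}}=0]$ uniformly for $z$ off a finite exceptional set. Integrating this pointwise statement against $\nu_j\to\mu_E$ (which is a probability measure) and using the definition of $(\widetilde{A\circ f})^*$ given in the paragraph preceding the statement then produces exactly the identity $(\widetilde{A\circ f})^*\mu_E=d\cdot\mu_C$.

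For the purely atomic property, my plan is to invoke the same construction for every iterate $f^n$ in place of $f$, via Theorem \ref{th:postfamily} applied to $f^n$ (rescaling $\bD$ around $t=0$), producing families $A^{(n)}\in(\cO(\bD)[t^{-1}])(z)$ of M\"obius transformations with $(A^{(n)}\circ f^n)(\cS_G)=\cS_G$ and hence, by the balanced identity just established, a compatible family of identities at all depths $n\in\bN$. Since preimages of a generic point under $f_t^n$ equidistribute (with weight $d^{-n}$) toward $\mu_{f_t}$, and since the asymptotic locations of these preimages as $t\to 0$ are governed (via Facts \ref{th:directionalpullback} and \ref{th:surplus} applied to the Berkovich iterates $f^k(\cS_G)$) by the finitely many tangent directions at each Berkovich point in the forward orbit of $\cS_G$, the set of all accumulation points of $\bigcup_n f_t^{-n}(z)$ as $t\to 0$ is at most countable. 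A diagonal subsequence extraction then forces any weak limit $\mu_C$ (and, by applying $(A_{t_j})_*$, any weak limit $\mu_E$) to be supported on this countable set, hence purely atomic.

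The main obstacle is the atomic conclusion. The balanced identity itself is essentially formal once the complex argument principle has been applied pointwise in $z$; the purely atomic property, however, requires a control uniform in $n$ on the extent to which mass of $\mu_{f_t}$ can ``spread out'' into a continuous limit along the degeneration. This is where the full iterative structure of the Berkovich orbit of $\cS_G$ must be brought to bear, and the delicate point will be to package the countable-support bound in a way that is robust under the passage to weak subsequential limits and compatible with the quantization language of \S\ref{sec:skeltal}.
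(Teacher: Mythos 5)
The paper does not actually prove this theorem: it is derived solely by citing [DF14, Theorems 2.4 and A], with only the two-sentence heuristic in the preceding paragraph as guidance. Your proposal is therefore reconstructing results the paper treats as black boxes. Your balanced-identity sketch agrees with the paper's heuristic (complex argument principle plus local uniform convergence of $A_t\circ f_t$ to $\phi_{\widetilde{A\circ f}}$ off $\supp[H_{\widetilde{A\circ f}}=0]$), but the step where you integrate the pointwise convergence of $g_j^*\delta_z$ against the simultaneously varying measure $\nu_j\to\mu_E$ is a genuine double-limit problem: the convergence is not uniform near the finite exceptional set, and $\nu_j$ may charge that set in the limit. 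That exchange of limits is exactly the content of [DF14, Theorem 2.4] and is not formal.

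The purely atomic property is where the proposal has a real gap, and in fact the intermediate claim on which it rests --- that the set of accumulation points of $\bigcup_n f_t^{-n}(z)$ as $t\to 0$ is at most countable --- is not correct. For fixed $n$ and generic $z$ the preimages $f_t^{-n}(z)$ do converge as $t\to 0$ to a finite set, and the union over $n$ of these limits is countable; but joint accumulation points obtained by taking $n_j\to\infty$ along with $t_j\to 0$ need not lie in that countable set. Equivalently, $\bigcup_n f_t^{-n}(z)$ is dense in the Julia set $J(f_{t})$, and the accumulation set of $J(f_{t})$ as $t\to 0$ is in general uncountable, so a diagonal extraction cannot force $\supp\mu_C$ into a countable set by this route. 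The statement $\mu_C\in M^1(\bP^1(\bC))^\dag$ is about concentration of \emph{mass}, not smallness of the accumulation set of supports, and it requires the quantitative level-by-level analysis of [DF14, Theorem A], controlling uniformly in $t$ how the pullback measures $d^{-n}(f^n_t)^*\delta_w$ redistribute as $n$ grows. Your closing acknowledgement that the ``delicate point'' must be addressed is apt, but the sketch as written does not contain the ideas needed to close it and passes through a false step on the way.
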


\section{A direct translation}\label{sec:direct}

Pick a meromorphic family $f\in(\cO(\bD)[t^{-1}])(z)(\subset\bL(z))$
of rational functions on $\bP^1(\bC)$ of degree $d>1$, and
suppose that $f^{-1}(\cS_G)\neq\{\cS_G\}$ in $\sP^1(\bL)$.
Choose a meromorphic family $A\in(\cO(\bD)[t^{-1}])(z)$ of 
M\"obius transformations on $\bP^1(\bC)$ 
such that $(A\circ f)(\cS_G)=\cS_G$ (by Theorem \ref{th:postfamily}).
Recall also
\begin{gather*}
 \Gamma_G:=\{\cS_G\}\quad\text{and}\quad\Gamma_f:=\{\cS_G,f(\cS_G)\}
\quad\text{in }\sH^1_{\mathrm{II}}(\bL). 
\end{gather*}
From Fact \ref{th:constantreduction} and \eqref{eq:goodred}, 
the following 
five
statements
\begin{gather*}
\Gamma_G=\Gamma_f,\quad
f(\cS_G)=\cS_G,\quad
\deg(\phi_{\tilde{f}})>0,\quad\text{and moreover}\\
A(\cS_G)=\cS_G\quad\text{and}\\
\tilde{A}=\phi_{\tilde{A}}\in\PGL(2,k_{\bL})=\PGL(2,\bC)\,\, (\text{under }k_{\bL}=\bC\text{ as fields, here and below})
\end{gather*}
are equivalent.
Alternatively when $\Gamma_G\neq\Gamma_f$, 
there are 
$h_A,a_A\in\bP^1(\bC)$ such that
\begin{multline}
 \supp[H_{\tilde{A}}=0]=\{h_A\}\quad\text{in }\bP^1(\bC),\quad
\phi_{\tilde{A}}\equiv a_A\quad\text{on }\bP^1(\bC),\\
\text{and moreover }\phi_{\tilde{f}}\equiv h_A\quad\text{on }\bP^1(\bC)\,\,
\bigl(\text{by }\eqref{eq:locunifoutside}\text{ and Fact }\ref{th:constantreduction}\bigr).\label{eq:redconst}
\end{multline}  

We note that
$T_{f(\cS_G)}\sP^1(\bL)
\underset{(A^{-1})_*}{\overset{\cong}{\leftarrow}} T_{\cS_G}\sP^1(\bL)
\underset{\eqref{directionreduction}}{\cong}\bP^1(k_{\bL})=\bP^1(\bC)$,
also recalling Fact \ref{th:Mobiustangent}.

\begin{lemma}
 When $\Gamma_f\neq\Gamma_G$, we have
\begin{gather}
  (A^{-1})_*\bigl(\widevec{\cS_GA(\cS_G)}\bigr)
 =\widevec{f(\cS_G)\cS_G}.\label{eq:consistent} 
\end{gather}
\end{lemma}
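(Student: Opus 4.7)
My plan is to unwind the definition of the tangent map, exploiting that $A$, viewed as an element of $\bL(z)$, lies in $\PGL(2,\bL)$ and hence acts on $\sP^1(\bL)$ as a homeomorphism preserving the tree structure and mapping each closed interval bijectively onto a closed interval. First, from $(A\circ f)(\cS_G)=\cS_G$ and the bijectivity of $A$ I would read off that $A^{-1}(\cS_G)=f(\cS_G)$. The hypothesis $\Gamma_f\neq\Gamma_G$ means $f(\cS_G)\neq\cS_G$, and injectivity of $A$ then also forces $A(\cS_G)\neq\cS_G$, so that both intervals $[\cS_G,A(\cS_G)]$ and $[f(\cS_G),\cS_G]$ are non-degenerate. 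I would then pick any $\cS'\in(\cS_G,A(\cS_G)]$ close enough to $\cS_G$ that $\overrightarrow{\cS_G A(\cS_G)}=\overrightarrow{\cS_G\cS'}$ as germs in $T_{\cS_G}\sP^1(\bL)$.

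Second, since $A^{-1}\in\PGL(2,\bL)$, it maps the interval $[\cS_G,\cS']$ homeomorphically onto $[A^{-1}(\cS_G),A^{-1}(\cS')]=[f(\cS_G),A^{-1}(\cS')]$, with $A^{-1}(\cS')\in(f(\cS_G),\cS_G]$ lying arbitrarily close to $A^{-1}(\cS_G)=f(\cS_G)$ as $\cS'$ approaches $\cS_G$. By the defining property of the tangent map,
\begin{equation*}
 (A^{-1})_*\bigl(\overrightarrow{\cS_G A(\cS_G)}\bigr)
 =\overrightarrow{A^{-1}(\cS_G)\,A^{-1}(\cS')}
 =\overrightarrow{f(\cS_G)\,A^{-1}(\cS')}
 =\overrightarrow{f(\cS_G)\,\cS_G},
\end{equation*}
where the last equality holds because $A^{-1}(\cS')$ lies on $[f(\cS_G),\cS_G]$ strictly past $f(\cS_G)$, so the germs at $f(\cS_G)$ of the half-open intervals $(f(\cS_G),A^{-1}(\cS')]$ and $(f(\cS_G),\cS_G]$ coincide.

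The only mildly subtle point is identifying the orientation of the image direction at $f(\cS_G)$ — namely that it points toward $\cS_G$ rather than away from it — and this is handled cleanly by the identity $A^{-1}(\cS_G)=f(\cS_G)$ together with the global (not just local) homeomorphism property of Möbius transformations acting on $\sP^1(\bL)$. No Berkovich dynamics beyond the definition of $h_*$ for $h\in\PGL(2,\bL)$ is needed.
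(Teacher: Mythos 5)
Your proof is correct. The key facts you use — that $A^{-1}\in\PGL(2,\bL)$ acts on $\sP^1(\bL)$ as a homeomorphism that preserves intervals, that $A^{-1}(\cS_G)=f(\cS_G)$, and that $\Gamma_f\neq\Gamma_G$ forces both $[\cS_G,A(\cS_G)]$ and $[f(\cS_G),\cS_G]$ to be non-degenerate — are exactly the needed ingredients, and your forward unwinding of the definition of $(A^{-1})_*$ via the interval $[\cS_G,\cS']$ and its image $[f(\cS_G),A^{-1}(\cS')]\subset[f(\cS_G),\cS_G]$ is sound. Your attention to the orientation point (why the image direction points toward $\cS_G$ rather than away) is also handled correctly: $A^{-1}$ carries the half-open interval $(\cS_G,A(\cS_G)]$ onto $(f(\cS_G),\cS_G]$, so $A^{-1}(\cS')$ lies on the initial segment of $(f(\cS_G),\cS_G]$.

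The paper's own argument is the same in substance but runs in the opposite direction: it starts from an arbitrary $\overrightarrow{v}\in T_{\cS_G}\sP^1(\bL)$ with $(A^{-1})_*(\overrightarrow{v})=\overrightarrow{f(\cS_G)\cS_G}=\overrightarrow{A^{-1}(\cS_G)\cS_G}$, observes that $\cS_G\in U_{(A^{-1})_*(\overrightarrow{v})}$, applies $A$ (using that $A(U_{\overrightarrow{w}})=U_{A_*(\overrightarrow{w})}$ for Möbius $A$, as recorded in the last line of Fact~\ref{th:surplus}) to get $A(\cS_G)\in U_{\overrightarrow{v}}$, and concludes $\overrightarrow{v}=\overrightarrow{\cS_GA(\cS_G)}$. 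So the paper's computation is at the level of the open balls $U_{\overrightarrow{v}}$ and uses the containment $A(\cS_G)\in U_{\overrightarrow{v}}$ to pin down $\overrightarrow{v}$, while yours is at the level of intervals and uses the definition of $h_*$ directly. Both are equally elementary; yours is a touch more explicit about where the image point sits on the interval, the paper's is a touch more concise. Either would do.
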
 

\begin{proof}
If $(A^{-1})_*(\vec{v})=\widevec{f(\cS_G)\cS_G}
\bigl(=\widevec{A^{-1}(\cS_G)\cS_G}\bigr)$ for some (indeed unique)
$\vec{v}\in T_{\cS_G}\sP^1(\bL)$, then we have
$\cS_G\in U_{(A^{-1})_*(\vec{v})}$, which yields
$A(\cS_G)\in A(U_{(A^{-1})_*(\vec{v})})
=U_{A_*(A^{-1})_*(\vec{v})}=U_{\vec{v}}$
(using Fact \ref{th:Mobiustangent}),
and in turn $\vec{v}=\widevec{\cS_GA(\cS_G)}$.
\end{proof}

\begin{lemma}\label{th:annulus}
When $\Gamma_f\neq\Gamma_G$, for any $\tilde{x},\tilde{y}
\in\bP^1(k_{\bL})=\bP^1(\bC)$ 
$($and any representatives $x,y\in\bP^1(\bL)$ of $\tilde{x},\tilde{y}$,
respectively$)$, we have
 \begin{gather}
 \begin{cases}
 \widevec{\cS_Gx}=\widevec{\cS_Gf(\cS_G)}\quad\text{in }T_{\cS_G}\sP^1(\bL)
  &\Leftrightarrow\quad\tilde{x}=h_A\quad\text{in }\bP^1(k_{\bL})=\bP^1(\bC),\\
  (A^{-1})_*(\widevec{\cS_Gy})
 =\widevec{f(\cS_G)\cS_G}\quad\text{in }T_{f(\cS_G)}\sP^1(\bL)
 &\Leftrightarrow\quad\tilde{y}=a_A\quad\text{in }\bP^1(k_{\bL})=\bP^1(\bC).
 \end{cases}\label{eq:intersection}
 \end{gather}
\end{lemma}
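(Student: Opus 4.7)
The plan is to unpack each of the two equivalences in \eqref{eq:intersection} by combining the canonical identification \eqref{directionreduction} with the constant-reduction direction formula \eqref{eq:constantdirection} from Fact~\ref{th:constantreduction}, and (for the second equivalence) with the bijectivity of the tangent map of a M\"obius transformation (Fact~\ref{th:directionalpullback}).

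For the first equivalence, I will apply Fact~\ref{th:constantreduction} to $f$. Since $\Gamma_f\neq\Gamma_G$ forces $f(\cS_G)\neq\cS_G$, equivalently $\deg\phi_{\tilde f}=0$, so $\phi_{\tilde f}$ is the constant $h_A$ on $\bP^1(\bC)$ by \eqref{eq:redconst}. Then \eqref{eq:constantdirection} (with a lift of $h_A\in\bP^1(\bC)=\bP^1(k_{\bL})$ to $\bP^1(\bL)$) yields $\overrightarrow{\cS_Gf(\cS_G)}=\overrightarrow{\cS_Gh_A}$. Since the correspondence $\overrightarrow{\cS_Gx}\leftrightarrow\tilde x$ in \eqref{directionreduction} is a bijection, $\overrightarrow{\cS_Gx}=\overrightarrow{\cS_Gh_A}$ is equivalent to $\tilde x=h_A$, proving the first line of \eqref{eq:intersection}.

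For the second equivalence, I first invoke \eqref{eq:consistent} to get $(A^{-1})_*\bigl(\overrightarrow{\cS_GA(\cS_G)}\bigr)=\overrightarrow{f(\cS_G)\cS_G}$. Because $A^{-1}\in\PGL(2,\bL)$, the last assertion of Fact~\ref{th:directionalpullback} says that $(A^{-1})_*:T_{\cS_G}\sP^1(\bL)\to T_{f(\cS_G)}\sP^1(\bL)$ is a bijection. Therefore
\[
(A^{-1})_*(\overrightarrow{\cS_Gy})=\overrightarrow{f(\cS_G)\cS_G}\;\Longleftrightarrow\;\overrightarrow{\cS_Gy}=\overrightarrow{\cS_GA(\cS_G)}.
\]
Now I apply Fact~\ref{th:constantreduction} to $A$: since $\phi_{\tilde A}\equiv a_A$ on $\bP^1(\bC)$ by \eqref{eq:redconst}, the constant-reduction formula \eqref{eq:constantdirection} gives $\overrightarrow{\cS_GA(\cS_G)}=\overrightarrow{\cS_Ga_A}$, and the bijection \eqref{directionreduction} then converts the equality of directions into $\tilde y=a_A$. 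This completes the plan.

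No step is really an obstacle; the lemma is a bookkeeping consequence of the formulas already recorded. The only point requiring a small amount of care is to justify that $\phi_{\tilde f}$ and $\phi_{\tilde A}$ really are the constant maps $h_A$ and $a_A$ respectively in the regime $\Gamma_f\neq\Gamma_G$, but this is exactly what \eqref{eq:redconst} was set up to record, so I will just cite it and \eqref{eq:constantdirection}. Thus the proof will be short, essentially two applications of \eqref{eq:constantdirection} bracketed by \eqref{directionreduction} and, in the second case, by the bijectivity of the tangent map of $A^{-1}$.
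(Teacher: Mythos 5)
Your proof is correct and follows essentially the same route as the paper's: the first equivalence by combining $\phi_{\tilde f}\equiv h_A$ from \eqref{eq:redconst} with \eqref{eq:constantdirection} and the bijection \eqref{directionreduction}, and the second by first translating via \eqref{eq:consistent} and the bijectivity of a M\"obius tangent map to $\overrightarrow{\cS_Gy}=\overrightarrow{\cS_GA(\cS_G)}$, then applying $\phi_{\tilde A}\equiv a_A$ with \eqref{eq:constantdirection}. The only difference is cosmetic: you make explicit the invocation of Fact~\ref{th:directionalpullback} for the bijectivity of $(A^{-1})_*$, which the paper leaves implicit.
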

\begin{proof}
 The former assertion is by $\phi_{\tilde{f}}\equiv h_A$ on $\bP^1(\bC)$
 (in \eqref{eq:redconst}) and \eqref{eq:constantdirection}.
 On the other hand, by \eqref{eq:consistent}, we have
\begin{gather*}
 (A^{-1})_*(\widevec{\cS_Gy})=\widevec{f(\cS_G)\cS_G}
 \quad\Leftrightarrow\quad
 \widevec{\cS_Gy}\bigl(=A_*(\widevec{f(\cS_G)\cS_G})\bigr)=\widevec{\cS_GA(\cS_G)},
\end{gather*} 
so the latter assertion holds by $\phi_{\tilde{A}}\equiv a_A$ on $\bP^1(\bC)$
(in \eqref{eq:redconst}) and \eqref{eq:constantdirection}.
\end{proof}

\begin{definition}[the admissibility of $\mu$ and the construction of 
the measure $\omega_{\mu}$]\label{th:direct}
For every $\mu=(\mu_C,\mu_E)\in (M^1(\bP^1(\bC)))^2$
satisfying the following admissibility
\begin{gather}
 \begin{cases}
  \tilde{A}^*\mu_E=\mu_C\quad\text{on }\bP^1(\bC)
 &\text{when }\Gamma_f=\Gamma_G\bigl(\Leftrightarrow\tilde{A}=\phi_{\tilde{A}}\Leftrightarrow A(\cS_G)=\cS_G\bigr),\\
 \mu_C(\{h_A\})+\mu_E(\{a_A\})\ge 1
 &\text{when }\Gamma_f\neq\Gamma_G
 \end{cases} \label{eq:compatibility}
\end{gather} 
(for $A$), there is a unique probability measure
\begin{gather*}
 \omega_{\mu}\in M^1(\Gamma_f)\quad\bigl(\text{and indeed }
\omega_{\mu}\in M^1(\Gamma_f)^\dag\text{ if }\mu\in (M^1(\bP^1(\bC))^\dag)^2\bigr)
\end{gather*}
on $\sP^1/S(\Gamma_f)=S(\Gamma_f)$
such that, writing $\mu_C=\nu_C+\tilde{\nu}_C$ (resp.\ $\mu_E=\nu_E+\tilde{\nu}_E$)
in $M(\bP^1)$ where 
$\nu_C$ (resp.\ $\nu_E$) has no atoms on $\bP^1(\bC)$ and
$\tilde{\nu}_C=\mu_C-\nu_C$ (resp.\ $\tilde{\nu}_E=\mu_E-\nu_E$) is purely atomic,
when $\Gamma_f=\Gamma_G$,
\begin{gather*}
 \begin{cases}
 \omega_{\mu}(\{\{\cS_G\}\})=\nu_E(\bP^1(\bC))\bigl(=\nu_C(\bP^1(\bC))\bigr)\quad\text{and} \\
 \omega_{\mu}\bigl(\bigl\{U_{(A^{-1})_*(\widevec{\cS_Gy})}\bigr\}\bigr)
 =\mu_E(\{\tilde{y}\})
\quad\text{for every }\tilde{y}\in\bP^1(k_{\bL})=\bP^1(\bC)\\
\hspace*{50pt}\biggl(\underset{\eqref{eq:compatibility}
\&\eqref{eq:tangentreduct}}{\Leftrightarrow} 
 \omega_{\mu}(\{U_{\widevec{\cS_Gy}}\})=\mu_C(\{\tilde{y}\})\quad\text{for every }\tilde{y}\in\bP^1(k_{\bL})=\bP^1(\bC)\biggr)
 \end{cases}
\end{gather*}
 and, when $\Gamma_f\neq\Gamma_G$(, noting also Lemma \ref{th:annulus}),
\begin{gather}
 \begin{cases}
 \omega_{\mu}(\{\{\cS_G\}\})=\nu_C(\bP^1(\bC)),\\
 \omega_{\mu}(\{U_{\widevec{\cS_Gx}}\})
 =\mu_C(\{\tilde{x}\})\quad \text{for every }\tilde{x}\in\bP^1(\bC)\setminus\{h_A\},\\
 \omega_{\mu}(\{\{f(\cS_G)\}\})=\nu_E(\bP^1(\bC)),\\
 \omega_{\mu}\bigl(\bigl\{U_{(A^{-1})_*(\widevec{\cS_Gy})}\bigr\}\bigr)
 =\mu_E(\{\tilde{y}\})\quad \text{for every }\tilde{y}\in\bP^1(\bC)\setminus\{a_A\},\quad\text{and}\\
 \omega_{\mu}\bigl(\bigl\{U_{\widevec{\cS_Gf(\cS_G)}}\cap U_{\widevec{f(\cS_G)\cS_G}}\bigr\}\bigr)
=\mu_C(\{h_A\})+\mu_E(\{a_A\})-1(\underset{\eqref{eq:compatibility}}{\ge} 0).
 \end{cases}\label{eq:omega}
\end{gather}
\end{definition}

For every $\mu=(\mu_C,\mu_E)\in (M^1(\bP^1(\bC)))^2$
satisfying the admissibility \eqref{eq:compatibility} (for $A$), we note that
\begin{gather*}
 \omega_{\mu}\bigl(S(\Gamma_f)\setminus F\bigr)=0
\quad\text{for some countable subset }F\text{ in }S(\Gamma_f), 
\end{gather*}
and also have
\begin{align}
\notag\omega_\mu\in M^1(\Gamma_f)^\dag &\Rightarrow
 (\pi_{\Gamma_f,\Gamma_G})_*\omega_\mu\in M^1(\Gamma_G)^\dag\\
 &\Rightarrow
 \mu_C=(\pi_{\Gamma_f,\Gamma_G})_*\omega_\mu\quad\text{in }
 M^1(\bP^1(\bC))^{\dag}=M^1(\Gamma_G)^{\dag},\label{eq:converse}
\end{align}
identifying $M^1(\Gamma_G)^{\dag}$ with $M^1(\bP^1(\bC))^{\dag}$ under 
the bijection
\begin{gather*}
 S(\Gamma_G)\setminus\{\cS_G\}
=T_{\cS_G}\sP^1(\bL)\cong\bP^1(k_{\bL})=\bP^1(\bC).
\end{gather*}

The following direct translation from degenerating complex dynamics 
into quantized Berkovich dynamics is 
based on the above explicit definition of $\omega_{\mu}$
and bypasses a correspondence between semistable models of
$\sP^1(\bL)$ and semistable vertex sets in $\sP^1(\bL)$ from
rigid analytic geometry (see, e.g, \cite{BPR13}),
which is used in \cite{DF14}.
See Section \ref{sec:complement} for a complement of this proposition.

\begin{proposition}[a direct translation, cf.\ {\cite[Proposition 5.1(1)]{DF14}}]\label{th:transfer}
For every ordered pair 
$\mu=(\mu_C,\mu_E)\in (M^1(\bP^1(\bC)))^2$
satisfying the admissibility
\eqref{eq:compatibility} $($for $A)$, we have
\begin{gather}
 \bigl(\widetilde{A\circ f}\bigr)^*\mu_E=d\cdot\mu_C\quad\text{in }M(\bP^1(\bC))
\quad\Rightarrow\quad
 f_G^*\omega_{\mu}=d\cdot(\pi_{\Gamma_f,\Gamma_G})_*\omega_{\mu}\quad\text{in }M(\Gamma_G).\label{eq:tosurface}
\end{gather} 
\end{proposition}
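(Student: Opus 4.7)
The plan is to verify the measure identity in $M^1(\Gamma_G)$ by evaluating both sides on each element of the partition $S(\Gamma_G)=\{\{\cS_G\}\}\cup\{U_{\overrightarrow{\cS_G x}}:\tilde{x}\in\bP^1(\bC)\}$. I will focus on the main case $\Gamma_f\neq\Gamma_G$; the case $\Gamma_f=\Gamma_G$ is parallel but simpler, since $\pi_{\Gamma_f,\Gamma_G}$ is then the identity, $S(\Gamma_f)$ has no annular component, and the admissibility $\tilde{A}^*\mu_E=\mu_C$ together with the hypothesis $(\widetilde{A\circ f})^*\mu_E=d\mu_C$ forces $\nu_C=\nu_E=0$ via a total-mass argument using $\deg_{\cS_G}(f)<d$ (because $f^{-1}(\cS_G)\neq\{\cS_G\}$ makes $\deg H_{\tilde{f}}>0$).

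For the RHS, \eqref{eq:projectfactor} combined with \eqref{eq:omega} gives $(\pi_{\Gamma_f,\Gamma_G})_*\omega_\mu(\{\cS_G\})=\nu_C(\bP^1(\bC))$ immediately, and a telescoping over the sub-components of $S(\Gamma_f)$ contained in $U_{\overrightarrow{\cS_G f(\cS_G)}}$ --- using $\mu_E(\bP^1(\bC))=1$ and the annulus-mass formula in \eqref{eq:omega}, non-negative exactly by admissibility \eqref{eq:compatibility} --- gives $(\pi_{\Gamma_f,\Gamma_G})_*\omega_\mu(U_{\overrightarrow{\cS_G x}})=\mu_C(\{\tilde{x}\})$ uniformly in $\tilde{x}$ (including $\tilde{x}=h_A$). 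For the LHS I use \eqref{eq:pullbackdefining}. At $U=\{\cS_G\}$, Definition \ref{th:multfactor} leaves only $V=\{f(\cS_G)\}$ and produces $\deg_{\cS_G}(f)\cdot\nu_E(\bP^1(\bC))$; decomposing $(\widetilde{A\circ f})^*\mu_E=d\mu_C$ into atomless and atomic parts gives $\phi_{\widetilde{A\circ f}}^*\nu_E=d\nu_C$, and comparing total masses with $\deg\phi_{\widetilde{A\circ f}}=\deg_{\cS_G}(f)$ from \eqref{eq:totallocaldegree} yields $d\nu_C(\bP^1(\bC))$. At $U=U_{\overrightarrow{\cS_G x}}$, the dichotomy in Definition \ref{th:multfactor} makes each $V\in S(\Gamma_f)$ contribute $(m+s)\omega_\mu(V)$ if $V\subset U_{f_*(\overrightarrow{\cS_G x})}$ and $s\omega_\mu(V)$ otherwise, with $m=m_{\overrightarrow{\cS_G x}}(f)$ and $s=s_{\overrightarrow{\cS_G x}}(f)$; summing and using $\omega_\mu(\sP^1)=1$ reduces the LHS to $s+m\cdot(\text{total }\omega_\mu\text{-mass in }U_{f_*(\overrightarrow{\cS_G x})})$.

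A second telescoping, pivoting on the identity $\nu_C(\bP^1(\bC))+\sum_{\tilde{x}'\neq h_A}\mu_C(\{\tilde{x}'\})+\omega_\mu(U_{\overrightarrow{\cS_G f(\cS_G)}}\cap U_{\overrightarrow{f(\cS_G)\cS_G}})=\mu_E(\{a_A\})$ extracted from \eqref{eq:omega} and $\mu_C(\bP^1(\bC))=1$, identifies that inner mass with $\mu_E(\{\phi_{\widetilde{A\circ f}}(\tilde{x})\})$ uniformly --- whether or not $f_*(\overrightarrow{\cS_G x})=\overrightarrow{f(\cS_G)\cS_G}$, which via Lemma \ref{th:annulus} is characterized by $\phi_{\widetilde{A\circ f}}(\tilde{x})=a_A$. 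Then \eqref{directdegdef} identifies $m$ with $\deg_{\tilde{x}}(\phi_{\widetilde{A\circ f}})$, and Faber's \eqref{eq:surplusfaber} identifies $s$ with $\ord_{\tilde{x}}[H_{\widetilde{A\circ f}}=0]$, so the LHS equals $(\widetilde{A\circ f})^*\mu_E(\{\tilde{x}\})=d\mu_C(\{\tilde{x}\})$ by the hypothesis, matching the RHS. The hard part will be this second telescoping --- establishing the identity between the ``$\cS_G$-side'' $\omega_\mu$-mass and $\mu_E(\{a_A\})$ --- since this is precisely where the subtle annulus term and the admissibility condition \eqref{eq:compatibility} combine to unify the two sub-cases and bridge the quantized data $\omega_\mu$ to the pointwise balanced identity for $\mu_E$ via Faber's identification of the surplus local degree with an $H$-divisor order.
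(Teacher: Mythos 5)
Your plan is correct and follows essentially the same strategy as the paper's proof: evaluate both sides of \eqref{eq:tosurface} on each $U\in S(\Gamma_G)$, use the definition \eqref{eq:omega} together with Lemma \ref{th:annulus} to identify the total $\omega_\mu$-mass carried by $U_{f_*(\overrightarrow{\cS_Gx})}$ with $\mu_E(\{\phi_{\widetilde{A\circ f}}(\tilde{x})\})$, and then apply \eqref{directdegdef}, Faber's \eqref{eq:surplusfaber}, and \eqref{eq:tangentreduct} to recognize $s_{\overrightarrow{\cS_Gx}}(f)+m_{\overrightarrow{\cS_Gx}}(f)\cdot\mu_E(\{\phi_{\widetilde{A\circ f}}(\tilde{x})\})$ as $\bigl((\widetilde{A\circ f})^*\mu_E\bigr)(\{\tilde x\})$. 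Your total-mass observation that admissibility with $\deg_{\cS_G}(f)<d$ forces $\nu_C=\nu_E=0$ in the subcase $\Gamma_f=\Gamma_G$ is a correct but not strictly necessary shortcut; the paper instead matches the atomless total masses via $\deg_{\cS_G}(f)\cdot\nu_E(\bP^1(\bC))=d\cdot\nu_C(\bP^1(\bC))$ directly, uniformly in both subcases.
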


\begin{proof}
Pick an ordered pair $\mu=(\mu_C,\mu_E)\in (M^1(\bP^1(\bC)))^2$
satisfying the admissibility \eqref{eq:compatibility}
(for $A$), and write $\mu_C=\nu_C+\tilde{\nu}_C,\mu_E=\nu_E+\tilde{\nu}_E$ as in
Definition \ref{th:direct}.

{\bfseries (a-1).} 
When $\Gamma_f\neq\Gamma_G$, 
for every $\tilde{x}\in\bP^1(\bC)=\bP^1(k_{\bL})\cong T_{\cS_G}\sP^1(\bL)=S(\Gamma_G)\setminus\{\cS_G\}$ (and every representative $x\in\bP^1(\bL)$ of $\tilde{x}$), 
recalling Definitions \ref{th:multfactor} and \ref{th:direct},
we compute both
\begin{align*}
&\bigl(f_G^*\omega_{\mu}\bigr)(\{U_{\widevec{\cS_Gx}}\})\\
&\underset{\eqref{eq:pullbackdefining}}{=}
\int_{\sP^1/S(\Gamma_f)}m_{V,U_{\widevec{\cS_Gx}}}(f)\omega_{\mu}(V)\\
&=s_{\widevec{\cS_Gx}}(f)\cdot 1+m_{\widevec{\cS_Gx}}(f)\cdot
\omega_{\mu}\bigl(\bigl\{V\in S(\Gamma_f):V\subset U_{f_*(\widevec{\cS_Gx})}\bigr\}\bigr)\\
&=s_{\widevec{\cS_Gx}}(f)+m_{\widevec{\cS_Gx}}(f)\times\\
\times&
\begin{cases}
1-\omega_{\mu}\Bigl(\bigl\{U_{\vec{w}}\in S(\Gamma_f):\vec{w}\in(T_{f(\cS_G)}\sP^1(\bL))\setminus\{f_*(\widevec{\cS_Gx})\}\bigr\}\cup\{\{f(\cS_G)\}\}\Bigr)\\
\hspace*{25pt}
=1-\omega_{\mu}\bigl(\bigl\{U_{(A^{-1})_*(\widevec{\cS_Gy})}:y\in\bP^1(\bL)\text{ satisfying }\tilde{y}\in\bP^1(k_{\bL})\setminus\{a_A\}\bigr\}\bigr)\\
\hspace*{35pt}-\omega_{\mu}(\{\{f(\cS_G)\}\})\\
\hspace*{25pt}
=\mu_E(\{a_A\})\quad\text{if }f_*(\widevec{\cS_Gx})=\widevec{f(\cS_G)\cS_G},\\
\omega_{\mu}\bigl(\{U_{f_*(\widevec{\cS_Gx})}\}\bigr)
=\omega_{\mu}\bigl(\{U_{(A^{-1})_*(\widevec{\cS_Gy})}\}\bigr)\\
\hspace*{35pt}\text{for any such }y\in\bP^1(\bL)\text{ that }
 f_*(\widevec{\cS_Gx})=(A^{-1})_*(\widevec{\cS_Gy})\quad\text{otherwise}
\end{cases}\\
&\overset{\eqref{eq:intersection}}{=}
s_{\widevec{\cS_Gx}}(f)+m_{\widevec{\cS_Gx}}(f)\cdot\mu_E(\{\tilde{y}\})\\
&
\text{for any such }y\in\bP^1(\bL)\text{ that }
 (A\circ f)_*(\widevec{\cS_Gx})=\widevec{\cS_Gy}
\bigl(\Leftrightarrow f_*(\widevec{\cS_Gx})=(A^{-1})_*(\widevec{\cS_Gy})\bigr)\\
&\underset{\eqref{eq:surplusfaber},\eqref{directdegdef}\&\eqref{eq:tangentreduct}}{=}
\ord_{\tilde{x}}\bigl[H_{\widetilde{A\circ f}}=0\bigr]
+\bigl(\deg_{\tilde{x}}(\phi_{\widetilde{A\circ f}})\bigr)\cdot
\mu_E\bigl(\bigl\{\phi_{\widetilde{A\circ f}}(\tilde{x})\bigr\}\bigr)\\
&\underset{\eqref{eq:degenpullback}}{=}\bigl((\widetilde{A\circ f})^*\mu_E\bigr)(\{\tilde{x}\})
\end{align*}
and
\begin{align*}
&\bigl((\pi_{\Gamma_f,\Gamma_G})_*\omega_{\mu}\bigr)(\{U_{\widevec{\cS_Gx}}\})\overset{\eqref{eq:factorrel}}{=}
\omega_{\mu}\bigl(\bigl\{V\in S(\Gamma_f):V\subset U_{\widevec{\cS_Gx}}\bigr\}\bigr)\\
&=
\begin{cases}
1-\omega_{\mu}\Bigl(\bigl\{U_{\vec{v}}\in S(\Gamma_f):\vec{v}\in(T_{\cS_G}\sP^1(\bL))\setminus\{\widevec{\cS_Gx}\}\bigr\}\cup\{\{\cS_G\}\}\Bigr)\\
\hspace*{20pt}=1-\mu_C\bigl(\bP^1(\bC)\setminus\{h_A\}\bigr)
=\mu_C(\{h_A\})
\quad\text{if }\widevec{\cS_Gx}=\widevec{\cS_Gf(\cS_G)},\\
 \omega_{\mu}(\{U_{\widevec{\cS_Gx}}\})
\quad\text{otherwise}  
 \end{cases}\\
&\underset{\eqref{eq:intersection}}{=}\mu_C(\{\tilde{x}\}).
\end{align*}
Hence if $(\widetilde{A\circ f})^*\mu_E=d\cdot\mu_C$ on $\bP^1(\bC)$, then 
we have the equality
$(f_G^*\omega_{\mu})(\{U_{\widevec{\cS_Gx}}\})
=(d\cdot(\pi_{\Gamma_f,\Gamma_G})_*\omega_{\mu})(\{U_{\widevec{\cS_Gx}}\})$.

{\bfseries (a-2).}
Moreover, we also compute both
\begin{align*}
\bigl(f_G^*\omega_{\mu}\bigr)(\{\{\cS_G\}\}) 
\underset{\eqref{eq:pullbackdefining}}{=}&
\int_{\sP^1/S(\Gamma_f)}m_{V,\{\cS_G\}}(f)\omega_{\mu}(V)
=\deg_{\cS_G}(f)\cdot\omega_{\mu}(\{\{f(\cS_G)\}\})\\
\underset{\eqref{eq:totallocaldegree}}{=}&
\deg\bigl(\phi_{\widetilde{A\circ f}}\bigr)\cdot\nu_E\bigl(\bP^1(\bC)\bigr)
=\bigl((\widetilde{A\circ f})^*\mu_E\bigr)\bigl(\bP^1(\bC)\setminus F_1\bigr)
\end{align*}
and
$((\pi_{\Gamma_f,\Gamma_G})_*\omega_{\mu})(\{\{\cS_G\}\})
\underset{\eqref{eq:factorrel}}{=}\omega_{\mu}(\{\{\cS_G\}\})=\nu_C(\bP^1(\bC))
=\mu_C(\bP^1(\bC)\setminus F_2)$,
where $F_1,F_2$ are any sufficiently large countable subsets in $\bP^1(\bC)$.

Hence if $(\widetilde{A\circ f})^*\mu_E=d\cdot\mu_C$ on $\bP^1(\bC)$,
then we also have $(f_G^*\omega_{\mu})(\{\{\cS_G\}\})
=(d\cdot(\pi_{\Gamma_f,\Gamma_G})_*\omega_{\mu})(\{\{\cS_G\}\})$.
Now the proof is complete in this case.

{\bfseries (b-1).} 
When $\Gamma_f=\Gamma_G$, 
for every $\tilde{x}\in\bP^1(\bC)=\bP^1(k_{\bL})\cong T_{\cS_G}\sP^1(\bL)=S(\Gamma_G)\setminus\{\cS_G\}$, similarly to {\bfseries (a-1)}, we compute both
\begin{align*}
&\bigl(f_G^*\omega_{\mu}\bigr)(\{U_{\widevec{\cS_Gx}}\})\\
&\underset{\eqref{eq:pullbackdefining}}{=}
\int_{\sP^1/S(\Gamma_G)}m_{V,U_{\widevec{\cS_Gx}}}(f)\omega_{\mu}(V)
=s_{\widevec{\cS_Gx}}(f)\cdot 1+
m_{\widevec{\cS_Gx}}(f)\cdot \omega_{\mu}(\{U_{f_*(\widevec{\cS_Gx})}\})\\
&=s_{\widevec{\cS_Gx}}(f)+m_{\widevec{\cS_Gx}}(f)\cdot
\mu_E(\{\tilde y\})\\
&\hspace*{50pt}\text{for any such }y\in\bP^1(\bL)
\text{ that }f_*(\widevec{\cS_Gx})=(A^{-1})_*(\widevec{\cS_Gy})\\
&\underset{\eqref{eq:surplusfaber},\eqref{directdegdef}\&\eqref{eq:tangentreduct}}{=}
\ord_{\tilde{x}}\bigl[H_{\widetilde{A\circ f}}=0\bigr]
+\bigl(\deg_{\tilde{x}}(\phi_{\widetilde{A\circ f}})\bigr)\cdot
\mu_E\bigl(\bigl\{\phi_{\widetilde{A\circ f}}(\tilde{x})\bigr\}\bigr)\\
&\underset{\eqref{eq:degenpullback}}{=}\bigl((\widetilde{A\circ f})^*\mu_E\bigr)(\{\tilde{x}\})
\end{align*}
and
\begin{gather*}
 \bigl((\pi_{\Gamma_f,\Gamma_G})_*\omega_{\mu}\bigr)(\{U_{\widevec{\cS_Gx}}\})
\underset{\eqref{eq:factorrel}}{=}\omega_{\mu}(\{U_{\widevec{\cS_Gx}}\})
=\mu_C(\{\tilde{x}\}).
\end{gather*}
Hence if $(\widetilde{A\circ f})^*\mu_E=d\cdot\mu_C$ on $\bP^1(\bC)$, then we have
the equality
$(f_G^*\omega_{\mu})(\{U_{\widevec{\cS_Gx}}\})=(d\cdot(\pi_{\Gamma_f,\Gamma_G})_*\omega_{\mu})(\{U_{\widevec{\cS_Gx}}\})$.

{\bfseries (b-2).} 
Similarly to {\bfseries (a-2)}, we also compute both
\begin{align*}
\bigl(f_G^*\omega_{\mu}\bigr)(\{\{\cS_G\}\}) 
\underset{\eqref{eq:pullbackdefining}}{=}&
\int_{\sP^1/S(\Gamma_G)}m_{V,\{\cS_G\}}(f)\omega_{\mu}(V)
=\deg_{\cS_G}(f)\cdot\omega_{\mu}(\{\{\cS_G\}\})\\
\underset{\eqref{eq:totallocaldegree}}{=}&
\deg\bigl(\phi_{\widetilde{A\circ f}}\bigr)\cdot\nu_E\bigl(\bP^1(\bC)\bigr)
=\bigl((\widetilde{A\circ f})^*\mu_E\bigr)\bigl(\bP^1(\bC)\setminus F_1\bigr)
\end{align*}
and 
$\bigl((\pi_{\Gamma_f,\Gamma_G})_*\omega_{\mu}\bigr)(\{\{\cS_G\}\})
\underset{\eqref{eq:factorrel}}{=}\omega_{\mu}(\{\{\cS_G\}\})
=\nu_C(\bP^1(\bC))
=\mu_C(\bP^1(\bC)\setminus F_2)$,
where $F_1,F_2$ are any sufficiently large countable subsets in $\bP^1(\bC)$.

Hence if $(\widetilde{A\circ f})^*\mu_E=d\cdot\mu_C$ on $\bP^1(\bC)$,
then we also have $(f_G^*\omega_{\mu})(\{\{\cS_G\}\})
=(d\cdot(\pi_{\Gamma_f,\Gamma_G})_*\omega_{\mu})(\{\{\cS_G\}\})$.
Now the proof is also complete in this case.
\end{proof}

The following complements Theorem \ref{th:balancedgeneral}.

\begin{proposition}\label{th:admissible}
If $\mu_C=\lim_{j\to\infty}\mu_{f_{t_j}},\mu_E=\lim_{j\to\infty}(A_{t_j})_*\mu_{f_{t_j}}$ are weak limit points on $\bP^1(\bC)$ as $t\to 0$ of 
$(\mu_{f_t})_{t\in\bD^*},((A_t)_*\mu_{f_t})_{t\in\bD^*}$, 
respectively, for some sequence $(t=t_j)$ in $\bD^*$ tending to 
$0$ as $j\to\infty$, then $\mu:=(\mu_C,\mu_E)\in(M^1(\bP^1(\bC))^\dag)^2$
also satisfies the admissibility \eqref{eq:compatibility} $($for $A)$.
\end{proposition}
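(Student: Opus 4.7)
The plan is to split the argument according to whether $\Gamma_f=\Gamma_G$ or $\Gamma_f\neq\Gamma_G$, exploiting the sharp dichotomy in the limiting behavior of $A_t$ as $t\to 0$ encoded by the factorization $\tilde A=H_{\tilde A}\phi_{\tilde A}$: either $\tilde A$ is a genuine non-degenerate Möbius transformation and $A_t$ converges uniformly on $\bP^1(\bC)$, or $\tilde A$ collapses and $A_t$ sends everything outside a shrinking neighborhood of $h_A$ into a shrinking neighborhood of $a_A$.

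In the case $\Gamma_f=\Gamma_G$, we have $\deg H_{\tilde A}=0$ so that $\tilde A=\phi_{\tilde A}\in\PGL(2,\bC)$ is a non-degenerate Möbius transformation. Normalizing the four coefficients of $A_t$ as in the definition of the reduction yields representatives in $\bC[[t]]$ whose specializations at $t=0$ still have non-vanishing determinant; from this it is direct that $A_t\to\tilde A$ uniformly on $\bP^1(\bC)$ in the spherical metric. Then for any $\phi\in C^0(\bP^1(\bC))$, the uniform convergence $\phi\circ A_{t_j}\to\phi\circ\tilde A$ combined with $\mu_{f_{t_j}}\to\mu_C$ weakly gives $\int\phi\,d(A_{t_j})_*\mu_{f_{t_j}}=\int\phi\circ A_{t_j}\,d\mu_{f_{t_j}}\to\int\phi\circ\tilde A\,d\mu_C=\int\phi\,d\tilde A_*\mu_C$. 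By uniqueness of the weak limit, $\mu_E=\tilde A_*\mu_C$, and since $\tilde A$ is Möbius, its pullback inverts its pushforward, yielding $\tilde A^*\mu_E=\mu_C$, the first line of \eqref{eq:compatibility}.

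In the case $\Gamma_f\neq\Gamma_G$, we have $\phi_{\tilde A}\equiv a_A$ and $\supp[H_{\tilde A}=0]=\{h_A\}$; a direct inspection of the normalized coefficients of $A_t$ then shows $A_t\to a_A$ locally uniformly on $\bP^1(\bC)\setminus\{h_A\}$. Fix closed neighborhoods $\overline V$ of $a_A$ and $U$ of $h_A$ in $\bP^1(\bC)$ whose boundaries are respectively $\mu_E$-null and $\mu_C$-null (possible for all but at most countably many radii by the Radon property of both measures). Local uniform convergence implies $A_{t_j}(\bP^1(\bC)\setminus U)\subset\overline V$ for all $j$ sufficiently large, and hence $(A_{t_j})_*\mu_{f_{t_j}}(\overline V)\ge\mu_{f_{t_j}}(\bP^1(\bC)\setminus U)$. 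On $\overline V$ and on $\bP^1(\bC)\setminus U$, the weak convergences $(A_{t_j})_*\mu_{f_{t_j}}\to\mu_E$ and $\mu_{f_{t_j}}\to\mu_C$ upgrade to actual numerical convergence by Portmanteau (these sets having null-boundary measure for the respective limits), and taking $j\to\infty$ gives $\mu_E(\overline V)\ge 1-\mu_C(U)$. Shrinking $U$ to $\{h_A\}$ and then $\overline V$ to $\{a_A\}$ through such good radii, and invoking outer regularity, yields $\mu_E(\{a_A\})+\mu_C(\{h_A\})\ge 1$, which is the second line of \eqref{eq:compatibility}.

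The main obstacle is the bookkeeping in the degenerate case: the standard Portmanteau inequalities point opposite directions for closed versus open sets, and one must choose $\overline V$ and $U$ on which actual numerical convergence of measures holds (no boundary atoms) in order to chain the bounds cleanly, and then pass to the atomic limits $\{a_A\}$ and $\{h_A\}$ via regularity. The two convergence statements for $A_t$ are immediate from the explicit shape of $\tilde A=H_{\tilde A}\phi_{\tilde A}$; the non-trivial content is the Portmanteau chain combined with the observation that a degenerating family of Möbius transformations collapses everything away from a single point to another single point.
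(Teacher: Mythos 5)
Your proof is correct and follows essentially the same route as the paper: split on whether $\Gamma_f=\Gamma_G$, use uniform convergence $A_t\to\tilde A$ in the non-degenerate case (so $\mu_E=\tilde A_*\mu_C$, hence $\tilde A^*\mu_E=\mu_C$), and use locally uniform convergence $A_t\to a_A$ away from $h_A$ plus weak convergence in the degenerate case. The paper's own treatment of the degenerate case is terse — it states the inequality $\mu_E(\{a_A\})\ge\mu_C(\bP^1(\bC)\setminus\{h_A\})$ with no explanation of how the two weak convergences interact with the local uniform convergence — whereas you explicitly carry out the Portmanteau bookkeeping, choosing nested neighborhoods of $a_A$ and $h_A$ with null boundary measure so that weak convergence upgrades to numerical convergence before passing to the atomic limit by regularity; this fills in the step the paper leaves implicit.
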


\begin{proof}
When $\Gamma_f=\Gamma_G$ or equivalently $\tilde{A}=\phi_{\tilde{A}}$,
by the uniform convergence \eqref{eq:locunifoutside} and $\supp[H_{\tilde{A}}=0]=\emptyset$, 
we have $\tilde{A}_*\mu_C=\mu_E$
on $\bP^1(\bC)$, that is, the admissibility
$\tilde{A}^*\mu_E=\mu_C$ on $\bP^1(\bC)$ in this case holds.

When $\Gamma_f\neq\Gamma_G$, for $0<\epsilon\ll 1$,
by the outer regularity of $\mu_E$,
there is a continuous test function $\psi$ on $\bP^1(\bC)$
such that $\psi\ge 0$ on $\bP^1(\bC)$, that $\psi\equiv 1$
on an open neighborhood 
of $a_A$, and that
$\mu_E(\{a_A\})+\epsilon/2>\int_{\bP^1(\bC)}\psi\mu_E$.
Then for any continuous test function $\eta$ on $\bP^1(\bC)$
supported by $\bP^1(\bC)\setminus\{h_A\}$ and
satisfying $0\le\eta\le 1$ on $\bP^1(\bC)$, 
we have 
$\mu_E(\{a_A\})+\epsilon>\int_{\bP^1(\bC)}\psi((A_{t_j})_*\mu_{f_{t_j}})
=\int_{\bP^1(\bC)}(\psi\circ A_{t_j})\mu_{f_{t_j}}
\ge\int_{\supp\eta}(\psi\circ A_{t_j})\cdot\eta\mu_{f_{t_j}}$
for $j\gg 1$.
Then by the uniform convergence \eqref{eq:locunifoutside} and
the first item in \eqref{eq:redconst}, 
we even have $\mu_E(\{a_A\})+\epsilon
>\int_{\supp\eta}1\cdot\eta\mu_{f_{t_j}}=\int_{\bP^1(\bC)}\eta\mu_{f_{t_j}}$
for $j\gg 1$, so that $\mu_E(\{a_A\})+\epsilon\ge\int_{\bP^1(\bC)}\eta\mu_C$
making $j\to\infty$.
Hence by the inner regularity of $\mu_C$,
we have $\mu_E(\{a_A\})+2\epsilon\ge\mu_C(\bP^1(\bC)\setminus\{h_A\})$, and in turn
$\mu_E(\{a_A\})\ge\mu_C(\bP^1(\bC)\setminus\{h_A\})$,
that is, the admissibility $\mu_C(\{h_A\})+\mu_E(\{a_A\})\ge(\mu_C(\bP^1(\bC))=)1$
in this case also holds.
\end{proof}

\section{Proof of Theorem \ref{th:computation}}
\label{sec:proofmain}

Let $K$ be an algebraically closed field that is complete
with respect to a non-trivial and non-archimedean absolute value $|\cdot|$,
and let $f\in K(z)$ be a rational function on $\bP^1$ of $\deg f=:d>1$.
Recall that 
\begin{gather*}
\Gamma_G:=\{\cS_G\}\quad\text{and}\quad
\Gamma_n:=\Gamma_{f^n}:=\bigl\{\cS_G,f^n(\cS_G)\bigr\}\quad\text{in }\sH^1_{\mathrm{II}}
\end{gather*}
for each $n\in\bN$,
and the definitions of $\Delta_f,\Delta_f^\dag$ in Subsection \ref{sec:mainth}. 

\begin{lemma}[cf.\ {\cite[Lemma 4.4]{DF14}}]\label{th:balancednonarchi}
For every $\nu\in M^1(\sP^1)$, if $\nu$ has the $f$-balanced
property $f^*\nu=d\cdot\nu$ on $\sP^1$ and satisfies $\nu(\{f(\cS_G)\})=0$, then
for every $n\in\bN$, $(\pi_{\Gamma_n})_*\nu\in M^1(\Gamma_n)$
has the quantized $f^n$-balanced property $($see \eqref{eq:quantizedbalanced}$)$, 
and if in addition $f^{-1}(\cS_G)\neq\{\cS_G\}$, then 
$(\pi_{\Gamma_G})_*\nu\in\Delta_f^\dag$.
\end{lemma}

\begin{proof}
Under the assumption on $\nu$, for every $U\in S(\Gamma_G)\setminus\{\{\cS_G\}\}$,
we compute
\begin{multline*}
\bigl(f_G^*((\pi_{\Gamma_f})_*\nu)\bigr)(\{U\})
=\bigl\langle(\pi_{\Gamma_f})^*\bigl(f_{G,*}1_{\{U\}}\bigr),\nu\bigr\rangle\\
\underset{\eqref{eq:pushfactor}}{=}
\int_{\sP^1\setminus\{f(\cS_G)\}}\bigl((f^*\delta_{\,\cdot\,})(U)\bigr)\nu
=\bigl\langle(f^*\delta_{\,\cdot\,})(U),\nu\bigr\rangle
=\bigl\langle 1_U,f^*\nu\bigr\rangle\\
=\bigl\langle (\pi_{\Gamma_G})^*1_{\{U\}},d\cdot\nu\bigr\rangle
=d\cdot\bigl((\pi_{\Gamma_G})_*\nu\bigr)(\{U\})
=d\cdot\bigl((\pi_{\Gamma_f,\Gamma_G})_*((\pi_{\Gamma_f})_*\nu)\bigr)(\{U\}),
\end{multline*}
so that also recalling \eqref{eq:countablysupp}, 
$(\pi_{\Gamma_f})_*\nu\in M^1(\Gamma_f)$ has the quantized $f$-balanced property
\eqref{eq:quantizedbalanced}.
On the other hand, for any $n\in\bN$, we have $(f^n)^*\nu=d^n\cdot\nu$ on $\sP^1$, 
and in turn 
\begin{multline*}
 0=d^{n-1}\cdot(\deg_{\cS_G}f)\cdot\nu(\{f(\cS_G)\})=
d^{n-1}\cdot(f^*\nu)(\{\cS_G\})\\
=((f^n)^*\nu)(\{\cS_G\})
=\deg_{\cS_G}(f^n)\cdot\nu(\{f^n(\cS_G)\})\ge\nu(\{f^n(\cS_G)\})(\ge 0),
\end{multline*}
so $\nu(\{f^n(\cS_G)\})=0$. Hence the former assertion holds, and
so does the latter 
also by \eqref{eq:nopotgood}, \eqref{eq:countablysupp}, \eqref{eq:canonicalpush}(, and $(\pi_{\Gamma_G})_*=(\pi_{\Gamma_n,\Gamma_G})_*(\pi_{\Gamma_n})_*$).
\end{proof}

\begin{proof}[Proof of Theorem {\rm \ref{th:computation}}]
Suppose that $f^{-1}(\cS_G)\neq\{\cS_G\}$, which is equivalent to that
\begin{gather}
 \nu_f(\{f(\cS_G)\})=\nu_f(\{\cS_G\})=
\bigl((\pi_{\Gamma_G})_*\nu_f\bigr)(\{\{\cS_G\}\})=0\label{eq:vanish}
\end{gather}
(by \eqref{eq:nopotgood}). Then by Lemma \ref{th:balancednonarchi},
we have $(\pi_{\Gamma_G})_*\nu_f\in\Delta_f^\dag$.
Suppose also that $\Char K=0$ (so $\#E(f)\le 2$)
and, in turn, that for any $a\in E(f)$, $f(a)=a$ or equivalently $f^{-1}(a)=\{a\}$.
Then for every $a\in E(f)$, by Lemma \ref{th:balancednonarchi},
we also have $(\pi_{\Gamma_G})_*\delta_a\in\Delta_f^\dag$.
Moreover,
for every $a\in E(f)$, 
every $n\in\bN$, and every 
$\vec{v}\in(T_{\cS_G}\sP^1)\setminus\{\widevec{\cS_Ga}\}$,
by Facts \ref{th:surplus} and \ref{th:directionalpullback},
we have
\begin{gather}
s_{\vec{v}}(f^n)=0\, \bigl(\Leftrightarrow f^n(U_{\vec{v}})=U_{(f^n)_*\vec{v}}
\bigr)\quad\text{and}\label{eq:nonexceptionaldirection}\\
(f^n)_*(\vec{v})\neq\widevec{f^n(\cS_G)a},
\label{eq:nonexceptionaldirectionsurplus}
\end{gather}
and for every $a\in E(f)$ and every $n\in\bN$, 
we also have
\begin{gather}
s_{\widevec{\cS_Ga}}(f^n)=d^n-\deg_{\cS_G}(f^n)\quad 
(\text{also using }\eqref{eq:totalsurplus})\quad\text{and}\label{eq:excepsurplus}\\
 (f^n)_*\bigl(\widevec{\cS_Ga}\bigr)=\widevec{f^n(\cS_G)a}
\quad(\text{also using Fact \ref{th:directionalpullback}}).\label{eq:exceptionaldirection}
\end{gather}

{\bfseries (a).} 
Let us see the former half in Theorem \ref{th:computation}.
If for any $\vec{v}\in T_{\cS_G}\sP^1$, we have
\begin{gather}
\limsup_{n\to\infty}\frac{s_{\vec{v}}(f^n)}{d^n}\ge\nu_f(U_{\vec{v}})\Bigl(=\bigl((\pi_{\Gamma_G})_*\nu_f\bigr)(\{U_{\vec{v}}\})\Bigr),\label{eq:surplusequidist}
\end{gather}
then for every $\omega\in\Delta_f$ and $n\gg 1$, 
fixing $\omega_n\in M^1(\Gamma_n)$ such that
$\omega_n(S(\Gamma_n)\setminus F)=0$ for some countable subset $F$ in
$S(\Gamma_n)$ and that
$d^{-n}(f^n)_G^*\omega_n=\omega(=(\pi_{\Gamma_n,\Gamma_G})_*\omega_n)$ 
in $M^1(\Gamma_G)$, 
also recalling Definition \ref{th:multfactor},
for every $\vec{v}\in T_{\cS_G}\sP^1$, we have 
\begin{multline*}
\omega(\{U_{\vec{v}}\})
=\limsup_{n\to\infty}\frac{\bigl((f^n)_G^*\omega_n\bigr)(\{U_{\vec{v}}\})}{d^n}
\underset{\eqref{eq:pullbackdefining}}{\ge}
\limsup_{n\to\infty}\Bigl(\frac{s_{\vec{v}}(f^n)}{d^n}
\cdot\omega_n\bigl(\sP^1/S(\Gamma_n)\bigr)\Bigr)\\
=\limsup_{n\to\infty}\frac{s_{\vec{v}}(f^n)}{d^n}
\ge
\bigl((\pi_{\Gamma_G})_*\nu_f\bigr)(\{U_{\vec{v}}\}),
\end{multline*} 
which with \eqref{eq:vanish} 
and \eqref{eq:countablysupp}
yields $\omega=(\pi_{\Gamma_G})_*\nu_f$ in $M^1(\Gamma_G)$.
Hence we have
$\Delta_f=\Delta_f^\dag=\{(\pi_{\Gamma_G})_*\nu_f\}$,
i.e., the case (i) in Theorem \ref{th:computation} holds, 
under this ``surplus equidistribution'' assumption 
\eqref{eq:surplusequidist}
(see \cite[p.\ 27]{DF14}).

{\bfseries (b.1).} 
Alternatively, suppose that 
there is $\vec{u}\in T_{\cS_G}\sP^1$ 
not satisfying \eqref{eq:surplusequidist}. 
Then 
fixing any $\cS\in\bP^1\setminus E(f)(\subset\sP^1\setminus(E(f)\cup\{f^n(\cS_G):n\in\bN\}))$, 
we have
\begin{multline*}
\notag\nu_f(U_{\vec{u}})
\le\limsup_{n\to\infty}\frac{\bigl((f^n)^*\delta_{\cS}\bigr)(U_{\vec{u}})}{d^n}\\
\underset{\eqref{eq:argumentnonarchi}}{\le}
\limsup_{n\to\infty}\frac{m_{\vec{u}}(f^n)}{d^n}
+\limsup_{n\to\infty}\frac{s_{\vec{u}}(f^n)}{d^n}
<\limsup_{n\to\infty}\frac{m_{\vec{u}}(f^n)}{d^n}+\nu_f(U_{\vec{u}}),
\end{multline*}
the first inequality in which is by 
the inner regularity of $\nu_f$ and \eqref{eq:equidist}(, and
the equality holds if $\cS_G\in\sP^1\setminus\sJ(f)$).
Hence $0<\limsup_{n\to\infty}(m_{\vec{u}}(f^n)/d^n)
=\prod_{j=0}^\infty(m_{(f^j)_*(\vec{u})}(f)/d)$,
so that $m_{(f^n)_*(\vec{u})}(f)\equiv d(>1)$ for $n\gg 1$,
and in turn, also recalling \eqref{eq:totallocaldegree}
(and the maximally ramification locus $\sR_{\max}(f)$ of $f$ in Subsection \ref{sec:balanced}), that
\begin{gather}
\deg_{f^n(\cS_G)}(f)\equiv d,\text{ i.e., }f^n(\cS_G)\in\sR_{\max}(f),\quad\text{for }n\gg 1;\label{eq:eventual} 
\end{gather}
then $f^{n+1}(\cS_G)\neq f^n(\cS_G)$ for $n\gg 1$
under the assumption that $f^{-1}(\cS_G)\neq\{\cS_G\}$.

Recall also that $\sR_{\max}(f)$ of $f$ is connected in $\sP^1$. 
Hence for $n\gg 1$, 
$f^{-1}([f^n(\cS_G),f^{n+1}(\cS_G)])=[f^{n-1}(\cS_G),f^n(\cS_G)]
\subset\sR_{\max}(f)$, 
and then
$f$ restricts to a homeomorphism from 
$[f^{n-1}(\cS_G),f^n(\cS_G)]$ onto $[f^n(\cS_G),f^{n+1}(\cS_G)]$
and, recalling \eqref{eq:totallocaldegree}, we also have
$\cS\mapsto m_{\widevec{\cS f^n(\cS_G)}}(f)=\deg_{\cS}(f)\equiv d(>1)$ on $[f^{n-1}(\cS_G),f^n(\cS_G)]$. Then for any $m\ge n\gg 1$, 
$\rho(f^m(\cS_G),f^{m+1}(\cS_G))=d^{m-n}\cdot\rho(f^n(\cS_G),f^{n+1}(\cS_G))$
by \eqref{eq:Lipschitz}.
Consequently, also by the upper semicontinuity of $\deg_{\,\cdot}(f)$ on $\sP^1$,
there is $a\in\bP^1$ such that
\begin{gather*}
 \bigl\{f^n(a):n\in\bN\cup\{0\}\bigr\}
\subset\bigl(\bP^1\cap \sR_{\max}(f)\bigr)\cap\bigcap_{N\in\bN}\overline{\{f^n(\cS_G):n\ge N\}}, 
\end{gather*}
which with $\#(\bP^1\cap \sR_{\max}(f))\le 2$ 
(mentioned in \eqref{eq:maximaldegree}) still implies
\begin{gather*}
 a\in E(f).
\end{gather*}
Under the assumption that $f(a)=a$ (or equivalently
$f^{-1}(a)=\{a\}$ so $f'(a)=0$),
we conclude that $\lim_{n\to\infty}f^n(\cS_G)=a$
(and $\cS_G\in\sP^1\setminus\sJ(f)$) 
and, moreover, that $f^n(\cS_G)\in(\cS_G,a]$ for $n\gg 1$,
using \cite[Theorem F]{Faber13topologyII} and \eqref{eq:Lipschitz}
(see \cite[p.\ 25]{DF14})(, or now assuming that $f$ is tamely maximally 
ramified near this $a\in E(f)\subset\sR_{\max}(f)\cap\bP^1$, for simplicity).

\begin{remark}
 Conversely, if there is such $a\in E(f)$ that $\lim_{n\to\infty}f^n(\cS_G)=a$
 and that $f^n(\cS_G)\in(\cS_G,a]$ for $n\gg 1$, then 
 \eqref{eq:eventual} is the case (since there is 
 $\cS\in(a,\cS_G]$ so close to $a$ that $(a,\cS]\subset\sR_{\max}(f)$), 
 and \eqref{eq:eventual}
 together with \eqref{eq:nonexceptionaldirection} and \eqref{eq:excepsurplus}
 implies that
 the inequality \eqref{eq:surplusequidist} for this $a$ does not hold for some 
 $\vec{v}\in T_{\cS_G}\sP^1$. 
\end{remark}

{\bfseries (b.2).} 
Once such $a\in E(f)$ is at our disposal,
for $n\gg 1$, 
noting that $f^{-1}(a)=\{a\}$, that $\lim_{n\to\infty}f^n(\cS_G)=a$,
and that $f^n(\cS_G)\in(\cS_G,a]$ for $n\gg 1$, 
we have 
\begin{gather}
 f\bigl(U_{\widevec{f^n(\cS_G)a}}\bigr)
=U_{\widevec{f^{n+1}(\cS_G)a}}\label{eq:superattbasin}
\end{gather}
(also by Fact \ref{th:surplus} applied to $\widevec{f^n(\cS_G)a}\in T_{f^n(\cS_G)}\sP^1$)
and not only
\begin{gather}
 \nu_f\bigl(U_{\widevec{f^n(\cS_G)\cS_G}}\bigr)=1
\label{eq:total}
\end{gather}
for $n\gg 1$, but also $\cS_G\in\sP^1\setminus\sJ(f)$
(also since $a\in\sP^1\setminus\sJ(f)$ and $f(\sJ(f))=\sJ(f)$),
and then fixing such $n_0\gg 1$ that
$\deg_{\cS_G}(f^n)/d^n$ is constant for $n\ge n_0$ 
(by \eqref{eq:eventual}) and 
fixing any $\cS\in\sP^1\setminus E(f)$,
for every $n\ge n_0$, we also have
\begin{align}
\notag 0<&\frac{\deg_{\cS_G}(f^n)}{d^n}
\biggl(\underset{\eqref{eq:excepsurplus}}{=}\frac{d^n-s_{\widevec{\cS_G a}}(f^n)}{d^n}=\\
\notag&\underset{\eqref{eq:exceptionaldirection}\&\eqref{eq:argumentnonarchi},\text{ when }n\gg 1}{=}1-\frac{(f^n)^*\delta_{\cS}}{d^n}(U_{\widevec{\cS_Ga}})
\equiv 1-\limsup_{n\to\infty}\frac{(f^n)^*\delta_{\cS}}{d^n}(U_{\widevec{\cS_Ga}})=\biggr)\\
&\underset{\eqref{eq:equidist}\&\cS_G\in\sP^1\setminus\sJ(f)}{\equiv}
1-\nu_f(U_{\widevec{\cS_Ga}});
\label{eq:stationary}
\end{align}
in particular $\nu_f(U_{\widevec{\cS_Ga}})<1$, 
and in turn $(\pi_{\Gamma_G})_*\nu_f\neq(\pi_{\Gamma_G})_*\delta_a$.

Now the case (ii) in Theorem \ref{th:computation} holds
under this ``surplus {\itshape in}equidistribution'' assumption, 
and the proof of the former half in Theorem \ref{th:computation} is complete. 

\begin{remark}\label{th:notalways}
In \cite[\S4.6]{DF14}, the condition 
$\sJ(f)\subset\sP^1\setminus(U_{\widevec{\cS_Ga}}\cup\{\cS_G\})$ 
was assumed with loss of some generality;
under this condition, the vanishing assumption on each $\omega_n$
in the definition \eqref{eq:deltaquantized} of $\Delta_f$ does not matter
(and did not appear in \cite[\S4.6]{DF14}). 
By \eqref{eq:stationary} (and $\deg_{\,\cdot\,}(f)\in\{1,\ldots,d\}$), the statement
$\nu_f(U_{\widevec{\cS_Ga}})=0
\bigl(\Leftarrow\sJ(f)\subset\sP^1\setminus(U_{\widevec{\cS_Ga}}\cup\{\cS_G\})\bigr)$
is equivalent to that
\begin{gather}
 \deg_{f^n(\cS_G)}(f)\equiv d\quad\text{for any }n\in\bN\cup\{0\},\tag{\ref{eq:eventual}$'$}\label{eq:identically}
\end{gather}
and is indeed not always the case (as seen in Section \ref{sec:example} below).
\end{remark}

{\bfseries (c.1).} Let us show the latter half,
i.e., the equality \eqref{eq:computationdelta}, in Theorem \ref{th:computation}.
For $n\gg 1$, 
by \eqref{eq:total}, \eqref{eq:nonexceptionaldirection},
the $f^n$-balanced property of $\nu_f$ on $\sP^1$, 
and Fact \ref{th:surplus},
for every 
$\vec{v}\in(T_{\cS_G}\sP^1)\setminus\{\widevec{\cS_Ga}\}$,
we have the equivalence
\begin{gather}
 \nu_f(U_{\vec{v}})>0\Leftrightarrow
(f^n)_*(\vec{v})
=\widevec{f^n(\cS_G)\cS_G}
\Leftrightarrow f^n(U_{\vec{v}})=U_{\widevec{f^n(\cS_G)\cS_G}},\label{eq:nonexceptionalpositive}
\end{gather}
(one of) which is the case for at least one $\vec{v}\in(T_{\cS_G}\sP^1)\setminus\{\widevec{\cS_Ga}\}$ since $\nu_f(U_{\widevec{\cS_Ga}})<1$.
Hence for $n\gg 1$, using the $f^n$-balanced property of $\nu_f$ on $\sP^1$ again,
for every 
$\vec{v}\in(T_{\cS_G}\sP^1)\setminus\{\widevec{\cS_Ga}\}$
satisfying $\nu_f(U_{\vec{v}})>0$, 
we have
\begin{multline}
(0<)\nu_f(U_{\vec{v}})
=\frac{(f^n)^*\nu_f}{d^n}(U_{\vec{v}})
=\frac{1}{d^n}\int_{f^n(U_{\vec{v}})}\bigl((f^n)^*\delta_{\cS}\bigr)(U_{\vec{v}})\nu_f(\cS)\\
\underset{\eqref{eq:nonexceptionalpositive}\&\eqref{eq:argumentnonarchi}}{=}
\frac{m_{\vec{v}}(f^n)+s_{\vec{v}}(f^n)}{d^n}
\cdot\nu_f(U_{\widevec{f^n(\cS_G)\cS_G}})
\underset{\eqref{eq:nonexceptionaldirection}\&\eqref{eq:total}}{=}
\frac{m_{\vec{v}}(f^n)}{d^n}
\label{eq:nonexceptionaldirectionmeasure}
\end{multline}
(and $m_{(f^n)_*\vec{v}}(f)\equiv d$). On the other hand, for $n\gg 1$,
by \eqref{eq:nonexceptionaldirectionsurplus}, 
\eqref{eq:exceptionaldirection}, 
\eqref{eq:nonexceptionalpositive}, 
and Fact \ref{th:directionalpullback},
we have 
\begin{multline}
\bigl\{(f^n)_*(\vec{v}):
\vec{v}\in(T_{\cS_G}\sP^1)\setminus\{\widevec{\cS_Ga}\}
\text{ satisfying }\nu_f(U_{\vec{v}})=0\bigr\}\\
=\bigl(T_{f^n(\cS_G)}\sP^1\bigr)\setminus\bigl\{\widevec{f^n(\cS_G)a},\widevec{f^n(\cS_G)\cS_G}\bigr\}.
\label{eq:annulus}
\end{multline}
Now we assume that $f$ is tamely maximally ramified near this $a\in E(f)\subset\sR(f)\cap\bP^1$. Then there is $\cS\in(\cS_G,a]\setminus\{a\}$ such that 
$\sR_{\max}(f)\cap U_{\widevec{\cS a}}=(\cS,a]$, and in turn 
for every $\cS'\in(\cS,a]\setminus\{a\}$ and
every $\vec{w}=\widevec{\cS'\cS''}\in(T_{\cS'}\sP^1)\setminus\{\widevec{\cS' a},\widevec{\cS'\cS}\}$, 
diminishing $[\cS',\cS'']$ if necessary, 
we have 
$m_{\vec{w}}(f)
=m_{\widevec{\cS''\cS'}}(f)
\le\deg_{\cS''}(f)<d$
by \eqref{eq:Lipschitz} and \eqref{eq:totallocaldegree}.
Hence by \eqref{eq:annulus}, for $n\gg 1$, since 
$\lim_{n\to\infty}f^n(\cS_G)=a$ and $f^n(\cS_G)\in(\cS_G,a]$, 
for every $\vec{v}\in(T_{\cS_G}\sP^1)\setminus\{\widevec{\cS_Ga}\}$ 
satisfying $\nu_f(U_{\vec{v}})=0$, 
we have
\begin{gather}
 m_{(f^n)_*(\vec{v})}(f)\le d-1.\label{eq:simpledirect}
\end{gather}

{\bfseries (c.2).} 
Pick $\omega\in\Delta_f$ and, for $n\gg 1$,
fix $\omega_n\in M^1(\Gamma_n)$ satisfying
$\omega_n(S(\Gamma_n)\setminus F)=0$ for some countable subset $F$ in
$S(\Gamma_n)$ and 
$d^{-n}(f^n)_G^*\omega_n=\omega=(\pi_{\Gamma_n,\Gamma_G})_*\omega_n$ in $M^1(\Gamma_G)$. Then by 
the latter equality
$\omega=(\pi_{\Gamma_n,\Gamma_G})_*\omega_n$,
$\omega$ also satisfies 
$\omega(S(\Gamma_G)\setminus F)=0$ for some countable subset $F$ in
$S(\Gamma_G)$. 

Let us compute $\omega(\{U\})$ for each $U\in S(\Gamma_G)$.
For $n\gg 1$, using 
the equality
$d^{-n}(f^n)_G^*\omega_n=\omega$
(and recalling Definition \ref{th:multfactor}), we have both
\begin{multline}
\omega(\{U_{\vec{v}}\})
\underset{\eqref{eq:nonexceptionaldirection}\&\eqref{eq:pullbackdefining}}{=}
\frac{m_{\vec{v}}(f^n)}{d^n}\cdot
\omega_n\bigl(\bigl\{V\in S(\Gamma_n):V\subset U_{(f^n)_*(\vec{v})}\bigr\}\bigr)\\
\text{for any }
\vec{v}\in(T_{\cS_G}\sP^1)\setminus\{\widevec{\cS_Ga}\}
\label{eq:pullbacknonexcep}
\end{multline}
and
\begin{gather}
 \omega_n(\{\{f^n(\cS_G)\}\})
\underset{\eqref{eq:pullbackdefining}}{=}
\frac{d^n\cdot\omega(\{\{\cS_G\}\})}{\deg_{\cS_G}(f^n)}
\underset{\eqref{eq:stationary}}{=}
\frac{\omega(\{\{\cS_G\}\})}{1-\nu_f(U_{\widevec{\cS_Ga}})}.\label{eq:gaussimagemeas}
\end{gather}
Then for $n\gg 1$,
by \eqref{eq:pullbacknonexcep}, 
\eqref{eq:nonexceptionalpositive}, and \eqref{eq:nonexceptionaldirectionmeasure}, 
we have
$\omega_n(\{V\in S(\Gamma_n):V\subset U_{\widevec{f^n(\cS_G)\cS_G}}\})
=\omega(\{U_{\vec{v}}\})/\nu_f(U_{\vec{v}})$
for every $\vec{v}\in(T_{\cS_G}\sP^1)\setminus\{\widevec{\cS_Ga}\}$
satisfying $\nu_f(U_{\vec{v}})>0$.
Hence there exists a constant $s_\omega\in[0,1]$ such that for $n\gg 1$,
\begin{gather}
\omega_n\bigl(\bigl\{V\in S(\Gamma_n):V\subset U_{\widevec{f^n(\cS_G)\cS_G}}\bigr\}\bigr)\equiv s_\omega\label{eq:eventualfull} 
\end{gather}
and that for every $\vec{v}\in(T_{\cS_G}\sP^1)\setminus\{\widevec{\cS_Ga}\}$ satisfying $\nu_f(U_{\vec{v}})>0$,
\begin{gather}
\omega(\{U_{\vec{v}}\})=s_\omega\nu_f(U_{\vec{v}}).
\label{eq:nonexceptionaldirectionpositivemeasure}
\end{gather}
Moreover, for every 
$\vec{v}\in(T_{\cS_G}\sP^1)\setminus\{\widevec{\cS_Ga}\}$
satisfying $\nu_f(U_{\vec{v}})=0$, we have
\begin{gather*}
0\le\omega(\{U_{\vec{v}}\})
\underset{\eqref{eq:pullbacknonexcep}}{\le}
\frac{m_{\vec{v}}(f^n)}{d^n}\cdot 1
=\prod_{j=0}^{n-1}\frac{m_{(f^j)_*(\vec{v})}(f)}{d}
\underset{\eqref{eq:simpledirect}}{\to} 0\quad\text{as }n\to\infty, 
\end{gather*}
so we still have
\begin{gather}
 \omega(\{U_{\vec{v}}\})=0=s_\omega\nu_f(U_{\vec{v}}).\label{eq:nonexceptionaldirectionzeromeasure} 
\end{gather}
Now for $n\gg 1$, 
we also have
\begin{align}
\notag \omega\bigl(\{U_{\widevec{\cS_Ga}}\}\bigr)
&=1-\omega\bigl(\bigl\{U_{\vec{v}}\in S(\Gamma_G):\vec{v}\in(T_{\cS_G}\sP^1)\setminus\{\widevec{\cS_Ga}\}\bigr\}\cup\{\{\cS_G\}\}\bigr)\\
\notag &\biggl(
\underset{
\eqref{eq:nonexceptionaldirectionpositivemeasure},\eqref{eq:nonexceptionaldirectionzeromeasure}\&\eqref{eq:countablysupp}}{=}
1-s_\omega\nu_f\bigl(\sP^1\setminus U_{\widevec{\cS_Ga}}\bigr)-\omega(\{\{\cS_G\}\})\biggr)\\
&=\bigl(s_\omega\nu_f(U_{\widevec{\cS_Ga}})+(1-s_\omega)\bigr)
-\omega(\{\{\cS_G\}\}).\label{eq:excepdirecmeas}
\end{align}

{\bfseries (c.3).} 
Let us also see the desired estimate on $\omega(\{\{\cS_G\}\})$. For $n\gg 1$, 
recalling $f^n(\cS_G)\in(\cS_G,a]$, we compute
\begin{align}
\notag0\le&\omega_n\bigl(\{U_{\widevec{f^n(\cS_G)\cS_G}}\cap U_{\widevec{\cS_Ga}}\}\bigr)\\
\notag &\Biggl(=\omega_n\bigl(\bigl\{V\in S(\Gamma_n):V\subset U_{\widevec{f^n(\cS_G)\cS_G}}\bigr\}\bigr)\\
\notag&\hspace*{50pt}-\omega_n\bigl(\bigl\{U_{\vec{v}}\in S(\Gamma_n):\vec{v}\in(T_{\cS_G}\sP^1)\setminus\{\widevec{\cS_Ga}\}\bigr\}\cup\{\{\cS_G\}\}\bigr)\\
\notag&\underset{\eqref{eq:eventualfull}\&\eqref{eq:excepdirecmeas}}{\equiv}
s_\omega+\Bigl(\bigl(s_\omega\nu_f(U_{\widevec{\cS_Ga}})+(1-s_\omega)\bigr)
-\omega(\{\{\cS_G\}\})-1\Bigr)\Biggr)\\
&= s_\omega\nu_f\bigl(U_{\widevec{\cS_Ga}}\bigr)-\omega(\{\{\cS_G\}\}),
\label{eq:positivethick}
\end{align}
which yields the upper bound
$\omega(\{\{\cS_G\}\})\le s_\omega\nu_f(U_{\widevec{\cS_Ga}})$. 
Moreover, for $n\gg 1$, 
by 
\eqref{eq:annulus},
\eqref{eq:countablysupp},
\eqref{eq:pullbacknonexcep}, and 
\eqref{eq:nonexceptionaldirectionzeromeasure}, 
we have
\begin{gather}
\omega_n\bigl(\bigl\{U_{\vec{w}}\in S(\Gamma_n):\vec{w}\in(T_{f^n(\cS_G)}\sP^1)\setminus\{\widevec{f^n(\cS_G)a},\widevec{f^n(\cS_G)\cS_G}\}\bigr\}\bigr)
=0,\label{eq:annulusmeas}
\end{gather}
and 
using the equality
$(\pi_{\Gamma_n,\Gamma_G})_*\omega_n=\omega$ in $M^1(\Gamma_G)$
(and \eqref{eq:factorrel}), we also have
\begin{multline}
 \omega_n\bigl(\{V\in S(\Gamma_n):V\subset U_{\widevec{\cS_Ga}}\}\bigr)
=\omega(\{U_{\widevec{\cS_Ga}}\})\\
\underset{\eqref{eq:excepdirecmeas}}{=}\bigl(s_\omega\nu_f(U_{\widevec{\cS_Ga}})+(1-s_\omega)\bigr)
-\omega(\{\{\cS_G\}\}).\label{eq:fineexcepdirecmeas}
\end{multline}
Then for $n\gg 1$, we compute
\begin{align*}
0\le&\omega_n\bigl(\{U_{\widevec{f^n(\cS_G)a}}\}\bigr)\\
=&
\omega_n\bigl(\{V\in S(\Gamma_n):V\subset U_{\widevec{\cS_Ga}}\}\bigr)
-\omega_n\bigl(\{U_{\widevec{f^n(\cS_G)\cS_G}}\cap U_{\widevec{\cS_Ga}}\}\bigr)\\
&-\omega_n\bigl(\bigl\{U_{\vec{w}}:\vec{w}\in(T_{f^n(\cS_G)}\sP^1)\setminus\{\widevec{f^n(\cS_G)a},\widevec{f^n(\cS_G)\cS_G}\}\bigr\}\bigr)
-\omega_n(\{\{f^n(\cS_G)\}\})\\
&\underset{\eqref{eq:fineexcepdirecmeas},
\eqref{eq:positivethick},\eqref{eq:annulusmeas}\&\eqref{eq:gaussimagemeas}}{=}
(1-s_\omega)-\frac{\omega(\{\{\cS_G\}\})}{1-\nu_f(U_{\widevec{\cS_Ga}})},
\end{align*}
which yields the other upper bound
$\omega(\{\{\cS_G\}\})\le(1-s_\omega)(1-\nu_f(U_{\widevec{\cS_Ga}}))$.
Hence $\Delta_f$ is contained in the right hand side in \eqref{eq:computationdelta}.

{\bfseries (c.4).}
Conversely, pick $\omega$ in the right hand side
in \eqref{eq:computationdelta}, so that 
for some $s\in[0,1]$ and 
some $s'\in[0,\min\{s\nu_f(U_{\widevec{\cS_Ga}}),(1-s)(1-\nu_f(U_{\widevec{\cS_Ga}}))\}]$, we have
\begin{gather*}
\omega(\{U_{\vec{v}}\})=s\nu_f(U_{\vec{v}})
\quad\text{for every }\vec{v}\in(T_{\cS_G}\sP^1)\setminus\{\widevec{\cS_Ga}\},\\
\omega(\{\{\cS_G\}\})=s',\quad\text{and}\\
\omega\bigl(\{U_{\widevec{\cS_Ga}}\}\bigr)=\bigl(s\nu_f(U_{\widevec{\cS_Ga}})+(1-s)\bigr)-s'.
\end{gather*}
For $n\gg 1$, recalling that $\lim_{n\to\infty}f^n(\cS_G)=a$, that
$f^n(\cS_G)\in(\cS_G,a]$, and that 
$\nu_f(U_{\widevec{\cS_Ga}})<1$,
there is 
$\omega_n\in M^1(\Gamma_n)$ 
such that
\begin{gather*}
\begin{cases}
\omega_n(\{\{\cS_G\}\})=s',\\
\displaystyle\omega_n(\{\{f^n(\cS_G)\}\})=\frac{s'}{1-\nu_f(U_{\widevec{\cS_Ga}})},\\
\omega_n(\{U_{\vec{v}}\})
=\begin{cases}
  s\nu_f(U_{\vec{v}})&
  \text{for every }\vec{v}\in(T_{\cS_G}\sP^1)\setminus\{\widevec{\cS_Ga}\},\\
  0 &
  \text{for every }\vec{v}\in(T_{f^n(\cS_G)}\sP^1)\setminus\{\widevec{f^n(\cS_G)a},\widevec{f^n(\cS_G)\cS_G}\}
 \end{cases}\\
\omega_n\bigl(\{U_{\widevec{\cS_Ga}}\cap U_{\widevec{f^n(\cS_G)\cS_G}}\}\bigr)=s\nu_f(U_{\widevec{\cS_Ga}})-s'(\ge 0),\quad\text{and}\\
\displaystyle\omega_n\bigl(\{U_{\widevec{f^n(\cS_G)a}}\}\bigr)
=1-s-\frac{s'}{1-\nu_f(U_{\widevec{\cS_Ga}})}(\ge 0)
\end{cases}
\end{gather*}
(indeed $\omega_n\ge 0$ and
$\omega_n(\sP^1/S(\Gamma_n))=1-s+s\nu_f(\sP^1\setminus\{\cS_G\})\underset{\eqref{eq:vanish}}{=}1$) and that $\omega_n(S(\Gamma_n)\setminus F)=0$ for some countable
subset $F$ in $S(\Gamma_n)$ (by \eqref{eq:countablysupp}). Then for $n\gg 1$,
we have $(\pi_{\Gamma_n,\Gamma_G})_*\omega_n=\omega$
in $M^1(\Gamma_G)$ (also by \eqref{eq:factorrel}).
Moreover, for $n\gg 1$, recalling Definition \ref{th:multfactor},
{\bfseries (I)} for every 
$\vec{v}\in(T_{\cS_G}\sP^1)\setminus\{\widevec{\cS_Ga}\}$
satisfying $\nu_f(U_{\vec{v}})>0$, we have
\begin{multline*}
 \bigl(d^{-n}(f^n)_G^*\omega_n\bigr)(\{U_{\vec{v}}\})\\
\underset{\eqref{eq:pullbackdefining},\eqref{eq:nonexceptionaldirection}\&\eqref{eq:nonexceptionalpositive}}{=}
\frac{m_{\vec{v}}(f^n)\cdot
\omega_n\bigl(\bigl\{V\in S(\Gamma_n):V\subset U_{\widevec{f^n(\cS_G)\cS_G}}\bigr\}\bigr)}{d^n}\\
\underset{\eqref{eq:nonexceptionaldirectionmeasure}\&\eqref{eq:countablysupp}}{=}
\nu_f(U_{\vec{v}})\cdot s\nu_f\bigl(\sP^1\setminus\{\cS_G\}\bigr)
\underset{\eqref{eq:vanish}}{=}
s\nu_f(U_{\vec{v}})=\omega(\{U_{\vec{v}}\}),
\end{multline*}
{\bfseries (II)} for every $\vec{v}\in(T_{\cS_G}\sP^1)\setminus\{\widevec{\cS_Ga}\}$ satisfying $\nu_f(U_{\vec{v}})=0$,
\begin{multline*}
 \bigl(d^{-n}(f^n)_G^*\omega_n\bigr)(\{U_{\vec{v}}\})
\underset{\eqref{eq:pullbackdefining}\&\eqref{eq:nonexceptionaldirection}}{=}
\frac{m_{\vec{v}}(f^n)
\cdot\omega_n\bigl(\{U_{(f^n)_*(\vec{v})}\}\bigr)}{d^n}
\underset{\eqref{eq:annulus}}{=}0=s\nu_f(U_{\vec{v}})=\omega(\{U_{\vec{v}}\}),
\end{multline*} 
and {\bfseries (III)}
\begin{multline*}
 \bigl(d^{-n}(f^n)_G^*\omega_n\bigr)(\{\{\cS_G\}\})
\underset{\eqref{eq:pullbackdefining}}{=}\frac{\deg_{\cS_G}(f^n)\cdot\omega_n(\{\{f^n(\cS_G)\}\})}{d^n}\\
\underset{\eqref{eq:stationary}}{=}
\bigl(1-\nu_f(U_{\widevec{\cS_Ga}})\bigr)\cdot\omega_n(\{\{f^n(\cS_G)\}\})
=s'=\omega(\{\{\cS_G\}\}),
\end{multline*}
and then 
\begin{multline*}
 (d^{-n}(f^n)_G^*\omega_n)(\{U_{\widevec{\cS_Ga}}\})
=1-(d^{-n}(f^n)_G^*\omega_n)(S(\Gamma_G)\setminus\{U_{\widevec{\cS_Ga}}\})\\
=1-\omega(S(\Gamma_G)\setminus\{U_{\widevec{\cS_Ga}}\})
=\omega(\{U_{\widevec{\cS_Ga}}\}).
\end{multline*}
Hence for $n\gg 1$, we also have
$d^{-n}(f^n)_G^*\omega_n=\omega$ in $M^1(\Gamma_G)$, and
the right hand side in \eqref{eq:computationdelta}
is contained in $\Delta_f$.

{\bfseries (d).} 
Once the equality \eqref{eq:computationdelta} is at our disposal,
the final assertion in the case (ii) in Theorem \ref{th:computation} 
(under the assumption that $f$ is tamely maximally ramified near $a$)
is clear also recalling Remark \ref{th:notalways}.
Now the proof of Theorem \ref{th:computation} is complete.
\end{proof}

\section{Proof of Theorem \ref{th:B}}
\label{sec:proof}

We use the notations in Sections \ref{sec:quantizedbalanced} 
and \ref{sec:direct}. Let 
\begin{gather*}
 f\in\bigl(\cO(\bD)[t^{-1}]\bigr)(z)(\subset\bL(z))
\end{gather*}
be a meromorphic family of rational functions on $\bP^1(\bC)$ of degree $d>1$,
and suppose that $f^{-1}(\cS_G)\neq\{\cS_G\}$ in $\sP^1(\bL)$.
Then $f^{-n}(\cS_G)\neq\{\cS_G\}$ for every $n\in\bN$
(see Subsection \ref{sec:balanced}).
Recall that $\Char\bL=\Char k_{\bL}=\Char\bC=0$ 
and that the absolute value $|\cdot|_r$ on $\bL$
is (the extension of) \eqref{eq:normLaurent},
fixing $r\in(0,1)$ once and for all. Since $\nu_{f^2}=\nu_f$ on $\sP^1(\bL)$,
$\mu_{(f_t)^2}=\mu_{f_t}$ on $\bP^1(\bC)$ for every $t\in\bD^*$,
$E(f^2)=E(f)$, and $\#E(f)\le 2$, replacing $f$ with $f^2$ if necessary,
we can assume that $f(a)=a$ or equivalently $f^{-1}(a)=\{a\}$ for any $a\in E(f)$ 
with no loss of generality.

Recall that 
\begin{gather*}
 \Gamma_G:=\{\cS_G\}\quad\text{and}\quad\Gamma_n=\Gamma_{f^n}:=\bigl\{\cS_G,f^n(\cS_G)\bigr\}\quad\text{in }\sH^1_{\mathrm{II}}(\bL)
\end{gather*}
for every $n\in\bN$, and that
$M^1(\Gamma_G)^{\dag}$ is identified with $M^1(\bP^1(\bC))^{\dag}$
under the bijection $S(\Gamma_G)\setminus\{\cS_G\}
=T_{\cS_G}\sP^1(\bL)\cong\bP^1(k_{\bL})=\bP^1(\bC)$. 
For every $n\in\bN$, pick a meromorphic family
\begin{gather*}
 A_n\in\bigl(\cO(\bD)[t^{-1}]\bigr)(z)
\end{gather*}
of M\"obius transformations on $\bP^1(\bC)$ such that $(A_n\circ f^n)(\cS_G)=\cS_G$
in $\sP^1(\bL)$
(by the existence part of Theorem \ref{th:postfamily}).

Let 
\begin{gather*}
 \mu_0=\lim_{j\to\infty}\mu_{f_{t_j}} 
\end{gather*}
be any weak limit point of $(\mu_{f_t})_{t\in\bD^*}$ on $\bP^1(\bC)$ as $t\to 0$,
where the sequence $(t_j)$ in $\bD^*$ tends to $0$ as $j\to\infty$. 
Then taking a subsequence of $(t_j)$ if necessary, for any $n\in\bN$,
there also exists the weak limit 
\begin{gather*}
 \mu_E^{(n)}:=\lim_{j\to\infty}\bigl((A_n)_{t_j}\bigr)_*\mu_{f_{t_j}}\quad\text{on }\bP^1(\bC). 
\end{gather*}
For every $n\in\bN$, by 
Theorem \ref{th:balancedgeneral} and Proposition \ref{th:admissible},
the ordered pair 
\begin{gather*}
 \mu^{(n)}:=\bigl(\mu_0,\mu_E^{(n)}\bigr)\in\bigl(M^1(\bP^1(\bC))^\dag\bigr)^2
\end{gather*}
not only has the degenerating $f^n$-balanced property (the former half
in \eqref{eq:reductionconstant})
but also satisfies the admissibility \eqref{eq:compatibility} (for $A_n$), and in turn
also by Proposition \ref{th:transfer}, we have
\begin{gather*}
 \omega_0:=(\pi_{\Gamma_n,\Gamma_G})_*\omega_{\mu^{(n)}}\in\Delta_f^\dag;
\end{gather*}
this measure $\omega_0$ is indeed independent of $n\in\bN$, and is identified
with $\mu_0$ under the identification of 
$M^1(\Gamma_G)^{\dag}$ with $M^1(\bP^1(\bC))^{\dag}$ (by \eqref{eq:converse}).

Hence 
in the case (i) 
in Theorem \ref{th:computation},
we have the desired
$\mu_0(=\omega_0)=(\pi_{\Gamma_G})_*\nu_f$ in $M^1(\Gamma_G)^{\dag}=M^1(\bP^1(\bC))^\dag$.

{\bfseries (a).} Suppose now that 
the case (ii) in Theorem \ref{th:computation} occurs.
Then there is $a=a(t)\in E(f)(\subset\bP^1(\bL))$ such 
that $\lim_{n\to\infty}f^n(\cS_G)=a$ and 
that $f^n(\cS_G)\in(\cS_G,a]$ for $n\gg 1$,
and then $\deg_{f^n(\cS_G)}(f)\equiv d$ for $n\gg 1$;
since $\nu_{f^n}=\nu_f$ on $\sP^1(\bL)$ for every $n\in\bN$, 
$\mu_{(f_t)^n}=\mu_{f_t}$ on $\bP^1(\bC)$ 
for every $t\in\bD^*$ and every $n\in\bN$, and $E(f^n)=E(f)$ for every $n\in\bN$, 
replacing $f$ with $f^\ell$ for some $\ell\gg 1$ if necessary, 
we also assume that for every $n\in\bN$,
$f^n(\cS_G)\in(\cS_G,a]$ (so $\Gamma_n\neq\Gamma_G$),
$\deg_{f^n(\cS_G)}(f)\equiv d$,
and both \eqref{eq:superattbasin} and \eqref{eq:stationary} hold, 
with no loss of generality. 

{\bfseries (b).}
Set
\begin{gather*}
 B_1(z):=
\begin{cases}
 \frac{1}{z-a} & \text{if }a\in\cO_{\bL},\\
 \frac{-z}{(z/a)-1} & \text{if }a\in\bL\setminus\cO_{\bL},\\
 z & \text{if }a=\infty\in\bP^1(\bL)(=\bL\cup\{\infty\})
\end{cases}
\in\PGL(2,\cO_{\bL}),
\end{gather*}
so that $B_1(a)=\infty$ and that $B_1(\cS_G)=\cS_G$ (or equivalently 
$\widetilde{B_1}=\phi_{\widetilde{B_1}}\in\PGL(2,k_{\bL})=\PGL(2,\bC)$,
and then $\widetilde{B_1^{-1}}=\phi_{\widetilde{B_1^{-1}}}=\phi_{\widetilde{B_1}}^{-1}\in\PGL(2,k_{\bL})=\PGL(2,\bC)$), and set 
\begin{gather*}
 f_{B_1}:=B_1\circ f\circ {B_1}^{-1}\in\bL[z].
\end{gather*}

{\bfseries (c.1).} 
Write $f_{B_1}(z)=\sum_{j=0}^dc_j(t)z^j\in\bL[z]$ (so 
$c_d\in\bL\setminus\{0\}$)
and set
\begin{gather*}
 d_0:=\max\Bigl\{j\in\{0,1,\ldots,d\}:|c_j|_r
=\max_{i\in\{0,1,\ldots,d\}}|c_i|_r\Bigr\}. 
\end{gather*}
Then noting that $f_{B_1}(\cS_G)\in(\cS_G,\infty]$, we have 
$|c_{d_0}|_r>1$, 
and 
$f_{B_1}(\cS_G)$ is represented by (the constant sequence of) 
the $\bL$-closed disk $B(0,|c_{d_0}|_r)$.
Setting
\begin{gather*}
 B_2(z):=c_{d_0}^{-1}z\in\bL[z]\cap\PGL(2,\bL),
\end{gather*}
so that $(B_2\circ f_{B_1})(\cS_G)=\cS_G$, 
we have
$\phi_{\widetilde{B_2\circ f_{B_1}}}(\zeta) =\sum_{j=0}^{d_0}\widetilde{(\frac{c_j}{c_{d_0}})}\cdot\zeta^j$,
\begin{multline}
 d_0=\deg(\phi_{\widetilde{B_2\circ f_{B_1}}})
\underset{\eqref{eq:totallocaldegree}}{=}\deg_{\cS_G}(B_2\circ f_{B_1})=\\
=\deg_{f(\cS_G)}(B_2\circ B_1)\cdot\deg_{B_1^{-1}(\cS_G)}(f)\cdot\deg_{\cS_G}(B_1^{-1})=\deg_{\cS_G}(f)(>0),\label{eq:locdeggauss}
\end{multline}
and ($H_{\widetilde{B_2\circ f_{B_1}}}(\zeta_0,\zeta_1)=\zeta_0^{d-d_0}$, so in particular)
\begin{gather}
\ord_{\zeta=\infty}\bigl[H_{\widetilde{B_2\circ f_{B_1}}}=0\bigr]=d-d_0=d-\deg_{\cS_G}(f).
\label{eq:holespoles}
\end{gather}

{\bfseries (c.2).} For each $j\in\{0,\ldots,d\}$, set
\begin{gather*}
C_j=C_j(t):=\frac{c_j}{c_{d_0}}\cdot c_{d_0}^{j-d_0}\in\bL,
\quad\text{so that }C_{d_0}=1\text{ and that }|C_j|_r<1
\text{ if }j<d_0,
\end{gather*}
and also set
\begin{gather*}
 f_{B_2B_1}(w):=(B_2\circ f_{B_1}\circ B_2^{-1})(w)
=c_{d_0}^{d_0}\Biggl(w^{d_0}+\sum_{j\in\{0,1,\ldots,d\}\setminus\{d_0\}}C_jw^j\Biggr)
\in\bL[z]. 
\end{gather*}
Then using Fact \ref{th:Mobiustangent} and \eqref{eq:tangentreduct}
(for $B_2^{-1},B_2\in\PGL(2,\bL)$), we have
\begin{align} 
\notag f_{B_2B_1}(U_{\widevec{\cS_G\infty}})
\biggl(&=(B_2\circ f_{B_1})\bigl(B_2^{-1}(U_{\widevec{\cS_G\infty}})\bigr)
=(B_2\circ f_{B_1})\bigl(U_{\widevec{B_2^{-1}(\cS_G)\infty}}\bigr)\\
\notag &\underset{(B_2\circ f_{B_1})(\cS_G)=\cS_G}{=}
(B_2\circ f_{B_1})\bigl(U_{\widevec{f_{B_1}(\cS_G)\infty}}\bigr)
=B_2\bigl(f_{B_1}(U_{\widevec{f_{B_1}(\cS_G)\infty}})\bigr)\\
\notag &\underset{\eqref{eq:superattbasin}\text{ applied to }n=1}{=}
B_2\bigl(U_{\widevec{f_{B_1}^2(\cS_G)\infty}}\bigr)
=U_{\widevec{((B_2\circ f_{B_1}^2)(\cS_G))\infty}}\biggr)\\
&\subsetneq\sP^1(\bL).\label{eq:image}
\end{align}

\begin{claim}\label{th:coefficient}
Either $d_0=d$ or there is $j>d_0$ such that $|C_j|_r\ge 1$.
\end{claim}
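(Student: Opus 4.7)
The plan is to compute $\deg_{\cS_G}(f_A)$ in two ways and compare. On one hand, since $A\in\PGL(2,\bL)$ has local degree $1$ everywhere, the chain rule for local degrees gives
\[
\deg_{\cS_G}(f_A)\;=\;\deg_{A^{-1}(\cS_G)}(f)\;=\;\deg_{f(\cS_G)}(f)\;=\;d,
\]
using $A^{-1}(\cS_G)=f(\cS_G)$ (the rescaling $A(z)=c_{d_0}^{-1}z$ was tailored to produce exactly this), together with the standing hypothesis, valid in case (ii) of Theorem \ref{th:computation} after the strengthening in paragraph (b), that $\deg_{f^n(\cS_G)}(f)\equiv d$ for every $n\in\bN$.

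On the other hand, $f_A$ is a polynomial in $w$ of degree $d$ with coefficient $c_{d_0}^{d_0}$ at $w^{d_0}$ and $c_{d_0}^{d_0}C_j$ at $w^j$ for $j\neq d_0$. Unwinding the definition of the local degree at the Gauss point of such a polynomial (conjugate by a suitable scaling of the target to land at $\cS_G$ and then read off $H_{\widetilde{\cdot}}$ and $\phi_{\widetilde{\cdot}}$ as in Section \ref{sec:background}) yields
\[
\deg_{\cS_G}(f_A)\;=\;\max\bigl\{j\in\{0,\ldots,d\}: |C_j|_r=\max\nolimits_{k}|C_k|_r\bigr\},
\]
after cancelling the common factor $|c_{d_0}|_r^{d_0}$ from all coefficient norms and using $C_{d_0}\equiv 1$.

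Comparing the two expressions, the maximum index in the display equals $d$, so in particular $|C_d|_r=\max_k|C_k|_r\ge|C_{d_0}|_r=1$. Hence $C_d$ either has a pole at $t=0$ or is regular with $C_d(0)\in\bC^*$; in either reading $C_d(0)\neq 0$ in $\bP^1(\bC)$, so $j:=d$ (which satisfies $d>d_0$ in the relevant setting, see the next paragraph) settles the claim.

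A side remark makes the statement sensible: the claim tacitly requires $d_0<d$, as otherwise $\{j\in\{0,\ldots,d\}:j>d_0\}=\emptyset$. Applying the chain rule iteratively along the orbit of $\cS_G$ gives $\deg_{\cS_G}(f^n)=d_0\cdot d^{n-1}$ for $n\ge 1$, and then \eqref{eq:stationary} forces $d_0/d=1-\nu_f(U_{\overrightarrow{\cS_G\infty}})$, so $d_0<d$ corresponds exactly to the sub-case $\nu_f(U_{\overrightarrow{\cS_G\infty}})>0$ of case (ii), which is the non-trivial sub-case in the proof of Theorem \ref{th:B}. The only mildly subtle step in the plan is the polynomial local-degree formula at the Gauss point, but this is a direct computation from normalized coefficients via $H_{\widetilde{\cdot}}$ and $\phi_{\widetilde{\cdot}}$ and presents no real difficulty given the machinery of Section \ref{sec:background}.
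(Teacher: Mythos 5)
Your proof is correct, and it takes a genuinely different route from the paper's. The paper argues by contradiction: assuming $C_j(0)=0$ for all $j>d_0$ (and implicitly that $d_0<d$), it computes $H_{\widetilde{f_A}}=\zeta_0^{d-d_0}\zeta_1^{d_0}$, reads off $s_{\overrightarrow{\cS_G\infty}}(f_A)=d-d_0>0$ from Fact~\ref{th:surplusalgclosed}, concludes $(f_A)(U_{\overrightarrow{\cS_G\infty}})=\sP^1(\bL)$ by Fact~\ref{th:surplus}, and contradicts \eqref{eq:image}. You instead compute $\deg_{\cS_G}(f_A)$ twice --- once by the chain rule for local degrees together with $A^{-1}(\cS_G)=f(\cS_G)$ and the standing hypothesis $\deg_{f(\cS_G)}(f)=d$, obtaining $d$; once via the reduction of the renormalized polynomial $B\circ f_A$, obtaining $\max\{j:|C_j|_r=\max_k|C_k|_r\}$ --- and equate. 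This pins down the explicit witness $j=d$, whereas the paper's argument is purely nonconstructive. Both proofs ultimately rest on the same input (that $f$ is fully ramified along the forward orbit of $\cS_G$ toward $\infty$; in the paper this enters through \eqref{eq:superattbasin} and hence \eqref{eq:image}, in yours through $\deg_{f(\cS_G)}(f)=d$), so neither is strictly stronger, but yours is more direct and avoids passing through the surplus-degree/image dichotomy.

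Two small remarks. First, your display $\deg_{\cS_G}(f_A)=\max\{j:|C_j|_r=\max_k|C_k|_r\}$ does need the brief check that $f_A(\cS_G)$ corresponds to a disk centered at $0$ (so that the post-composition can be taken to be a pure scaling); this holds because $|c_{d_0}^{d_0}C_0|_r<|c_{d_0}|_r^{d_0}\le\max_{j\ge1}|c_{d_0}^{d_0}C_j|_r$, and is worth saying. Second, your side remark on the vacuity when $d_0=d$ is a genuine improvement in precision: the paper's first line ``Otherwise, we have $(0<)d_0<d$\dots'' silently assumes $d_0<d$, which (as you correctly observe via \eqref{eq:stationary}) is equivalent to $\nu_f(U_{\overrightarrow{\cS_G\infty}})>0$ and does not follow from the negation of the claim alone. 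Your identification that the claim is only non-vacuous, and only needed, in that sub-case tidies up a point the paper leaves implicit.
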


\begin{proof}
Otherwise, $d_0<d$ and,
$|C_j|_r<1$ for every $j\in\{0,\ldots,d\}\setminus\{d_0\}$. 
Then since $|c_{d_0}^{d_0}|_r=|c_{d_0}|_r^{d_0}>1$,
we have $H_{\widetilde{f_{B_2B_1}}}(\zeta_0,\zeta_1)=\zeta_0^{d-d_0}\zeta_1^{d_0}$
(and $\phi_{\widetilde{f_{B_2B_1}}}\equiv\infty\in\bP^1(k_{\bL})$), 
so that
$\ord_{\zeta=\infty}[H_{\widetilde{f_{B_2B_1}}}=0]=d-d_0$. In particular,
we must have 
\begin{gather*}
 s_{\widevec{\cS_G\infty}}(f_{B_2B_1})=\ord_{\zeta=\infty}[H_{\widetilde{f_{B_2B_1}}}=0]=d-d_0>0 
\end{gather*}
(by Fact \ref{th:surplusalgclosed}),
so
$f_{B_2B_1}\bigl(U_{\widevec{\cS_G\infty}}\bigr)=\sP^1(\bL)$
(by Fact \ref{th:surplus}).
This contradicts \eqref{eq:image}. 
\end{proof}

{\bfseries (c.3).}
 Since this $a\in E(f)$ is a fixed point in $\bP^1(\bL)$ of 
 $f\in(\cO(\bD)[t^{-1}])(z)$,
 this $a=a(t)$ 
 is indeed in $\bP^1(\mathbb{K})$ over a finite algebraic field
 extension $\mathbb{K}$ of the quotient field of the domain $\cO(\bD)[t^{-1}]$, 
 that is, for $0<s_0\ll 1$,
 by the substitution/change of indeterminants $t=s^m$ for some $m\in\bN$, we have 
\begin{gather*}
  a=a(s^m)\in\bP^1(\cO(\bD_{s_0})[s^{-1}])(\subset\bP^1(\bL)),
\end{gather*} 
where 
 $\bD_{s_0}:=\{s\in\bC:|s|<s_0\}$ (cf.\ \cite[Proof of Corollary 5.3]{DF14}).
 Then decreasing $0<s_0\ll 1$ if necessary, we have not only
 $c_j(s^m),C_j(s^m)\in\cO(\bD_{s_0})[s^{-1}]\subset\bL$ 
 for every $j\in\{0,1,\ldots,d\}$ but also
 $(B_1)_{s^m},(B_2)_{s^m}\in\PGL(2,\cO(\bD_{s_0})[s^{-1}])(\subset\PGL(2,\bL)$
 and indeed $(B_1)_{s^m}\in\PGL(2,\cO_{\bL})$),
 and still $(B_1)_{s^m}(\cS_G)=\cS_G$ in $\sP^1(\bL)$ or equivalently
$\widetilde{(B_1)_{s^m}}=\phi_{\widetilde{(B_1)_{s^m}}}
(=\phi_{\widetilde{B_1}}=\widetilde{B_1})$ in $\PGL(2,\bC)=\PGL(2,k_{\bL})$.

Let us, for notational simplicity, denote by
\begin{gather*}
 A:=A_1=(A_1)_t\in\bigl(\cO(\bD)[t^{-1}]\bigr)(z)
\end{gather*}
the 
meromorphic 
family $A_1$ ($A_n$ for $n=1$)
of M\"obius transformations on $\bP^1(\bC)$,
and also by
\begin{gather*}
\mu_E:=\mu_E^{(1)}\in M^1(\bC)^\dag
\quad\text{and}\quad
\mu:=\mu^{(1)}=(\mu_0,\mu_E)\in\bigl(M^1(\bP^1(\bC))^\dag\bigr)^2
\end{gather*}
the probability measure $\mu_E^{(1)}$ and the ordered pair $\mu^{(1)}$, respectively.
Set 
\begin{gather*}
 D=D_s:=(B_2\circ B_1)_{s^m}\circ(A_{s^m})^{-1}\in
\PGL\bigl(2,\cO(\bD_{s_0})[s^{-1}]\bigr)\bigl(\subset\PGL(2,\bL)\bigr),
\end{gather*}
so that $\tilde{D}=\phi_{\tilde{D}}$ 
in $\PGL(2,\bC)=\PGL(2,k_{\bL})$ 
(by the uniqueness part in Theorem \ref{th:postfamily})
since 
$((B_2\circ B_1)_{s^m}\circ f_{s^m})(\cS_G)
=((B_2\circ f_{B_1})_{s^m}((B_1)_{s^m}(\cS_G))
=(B_2\circ f_{B_1})_{s^m}(\cS_G)
=\cS_G=(A\circ f)_{s^m}(\cS_G)$.

\begin{claim}\label{th:support}
$\supp((\phi_{\tilde{D}})_*\mu_E)\subset\bP^1(\bC)\setminus\{\infty\}$.
\end{claim}

\begin{proof}
Recall that $|\cdot|_r$ and $|\cdot|$ are the absolute values on 
$\bL$ and on $\bC$, respectively.
For every $s\in\bD_{s_0}^*$ and every $z\in\bC$, we compute
\begin{gather*}
 (f_{B_1})_{s^m}\bigl(c_{d_0}(s^m)z\bigr)
 =\bigl(c_{d_0}(s^m)\bigr)^{d_0+1}z^{d_0}\cdot
 \Biggl\{1+\sum_{j\in\{0,1,\ldots,d\}\setminus\{d_0\}}
C_j(s^m)z^{j-d_0}\Biggr\}.
\end{gather*}
Let us see that for $\ell\gg 1$, if $0<|s|\ll s_0$, then
\begin{gather*}
 \inf_{|z|=\ell}\Biggl|1+\sum_{j\in\{0,1,\ldots,d\}\setminus\{d_0\}}
C_j(s^m)z^{j-d_0}\Biggr|\ge\frac{1}{2}(>0); 
\end{gather*}
for, in the latter case in Claim \ref{th:coefficient}, we set
\begin{gather*}
 d_1:=\max\bigl\{j\in\{d_0+1,\ldots,d\}:|C_j|_r=\max_{j>d_0}|C_j|_r(\ge 1)\bigr\},
\end{gather*}
so that ($d_1>d_0$, that) $\limsup_{s\to 0}|C_{d_1}(s^m)|\in(0,+\infty]$ 
(since $|C_{d_1}(s^m)|_r=|C_{d_1}|_r^m\ge 1$), and that
for every $j>d_0$, $C_j(s^m)/C_{d_1}(s^m)\in\cO(\bD_{s_0})$, which
vanishes at $s=0$ if $j>d_1$ 
(since $|C_j(s^m)/C_{d_1}(s^m)|_r=|C_j|_r^m/|C_{d_1}|_r^m$ is $\le 1$
if $j>d_0$, and is $< 1$ if $j>d_1$).
Then for $\ell\gg 1$ (so that the second and third inequalities below hold), 
if $0<|s|\ll s_0$ (so that the first and fourth ones below hold), then
\begin{align*}
&\Biggl|\sum_{j<d_0}C_j(s^m)z^{j-d_0}\Biggr|
\Biggl(\le \sum_{j<d_0}(0+1)\ell^{j-d_0}\quad(\text{by }
|C_j(s^m)|_r=|C_j|_r^m<1\text{ if }j<d_0)\\
\le &d_0\quad(\text{noting that the sum above is over }j<d_0)\\
\le&\biggl(\min\Bigl\{1,2^{-1}\cdot\limsup_{s\to 0}|C_{d_1}(s^m)|\Bigr\}\biggr)\times\\
&\hspace*{45pt}\times\biggl(\ell^{d_1-d_0}\Bigl(1-\sum_{d_1>j>d_0}\Bigl(\Bigl|\frac{C_j(s^m)}{C_{d_1}(s^m)}\Bigr||_{s=0}+1\Bigr)\ell^{j-d_1}\Bigr)-1\biggr)-\frac{3}{2}\\
\le&|C_{d_1}(s^m)|\cdot
\Bigl(\ell^{d_1-d_0}
-\sum_{d_1>j>d_0}\Bigl|\frac{C_j(s^m)}{C_{d_1}(s^m)}\Bigr|\ell^{j-d_0}-\sum_{j>d_1}\Bigl|\frac{C_j(s^m)}{C_{d_1}(s^m)}\Bigr|\ell^{j-d_0}\Bigr)-\frac{3}{2}\Biggr)\\
\le&\Biggl|\sum_{j>d_0}C_j(s^m)z^{j-d_0}\Biggl|-\frac{3}{2}
\end{align*}
on $\{z\in\bC:|z|=\ell\}$,
which yields the desired inequality in this case.
Similarly, in the former case ($d_0=d$) in Claim \ref{th:coefficient},
for $\ell\gg 1$ (so 
that the final inequality below holds), 
if $0<|s|\ll s_0$ (so 
that the second inequality below holds), then
\begin{gather*}
\biggl|\sum_{j<d_0}C_j(s^m)z^{j-d_0}\biggr|
\le\sum_{j<d_0}|C_j(s^m)|\ell^{j-d_0}\le\sum_{j<d_0}(0+1)\ell^{j-d_0}\le\frac{1}{2}
\end{gather*}
on $\{z\in\bC:|z|=\ell\}$,
which still yields the desired inequality in this case.

Hence, since $d_0\ge 1$ (in \eqref{eq:locdeggauss}) and 
$|c_{d_0}(s^m)|_r=|c_{d_0}|_r^m>1$,
fixing $\ell_0\gg 1$, if $0<|s|\ll s_0$, then
$(f_{B_1})_{s^m}\bigl(\bigl\{z\in\bC:|z|=|c_{d_0}(s^m)|\ell_0\bigr\}\bigr)
\subset\bigl\{z\in\bC:|z|\ge |c_{d_0}(s^m)|^{d_0+1}\ell_0^{d_0}/2\bigr\}
\subset\bigl\{z\in\bC:|z|\ge 2|c_{d_0}(s^m)|\ell_0\bigr\}$, 
which with the maximum modulus principle for
holomorphic functions applied to $1/((f_{B_1})_{s^m}(1/w))$ near $w=0\in\bC$
in turn yields 
\begin{gather*}
 (f_{B_1})_{s^m}\bigl(\bigl\{z\in\bC:|z|>|c_{d_0}(s^m)|\ell_0\bigr\}\bigr)\subset\bigl\{z\in\bC:|z|>2|c_{d_0}(s^m)|\ell_0\bigr\}, 
\end{gather*}
so that
$\supp(((B_1)_{s^m})_*(\mu_{f_{s^m}}))(=\supp(\mu_{(f_{B_1})_{s^m}}))\subset\{z\in\bC:|z|\le\bigl|c_{d_0}(s^m)\bigr|\ell_0\}$
(see Fact \ref{maxentropy}). Hence for $0<|s|\ll s_0$, 
recalling that $(B_2)_{s^m}(z)=(c_{d_0}(s^m))^{-1}z$, 
we have
\begin{gather*}
\supp\bigl((D_s)_*(A_{s^m})_*\mu_{f_{s^m}}\bigr)
\bigl(=\supp(((B_2\circ B_1)_{s^m})_*\mu_{f_{s^m}})\bigr)
\subset\bigl\{z\in\bC:|z|\le\ell_0\bigr\}.\label{eq:inclusion}
\end{gather*}
Recall that
$\mu_E:=\lim_{j\to\infty}(A_{t_j})_*\mu_{f_{t_j}}$ weakly on $\bP^1(\bC)$, 
and pick a sequence $(s_j)$ in $\bD^*$
so that $t_j=s_j^m$ for every $j\in\bN$.
Then $\lim_{j\to\infty}D_{s_j}=\phi_{\tilde{D}}(=\tilde{D})$ 
uniformly on $\bP^1(\bC)$ (by \eqref{eq:locunifoutside}). 
Now the above inclusion 
for $s=s_j$, $j\gg 1$, 
completes the proof of Claim \ref{th:support}, by making $j\to\infty$.
\end{proof}

{\bfseries (d).}
Recalling that
$\omega_0(=(\pi_{\Gamma_f,\Gamma_G})_*\omega_{\mu})\in\Delta_f^\dag(\subset\Delta_f)$, there are $s\in[0,1]$ and
$s'\in[0,\min\{s\nu_f(U_{\widevec{\cS_Ga}}),
(1-s)(1-\nu_f(U_{\widevec{\cS_Ga}}))\}]$ 
such that
\begin{gather*}
\begin{cases}
 \omega_0(\{U_{\vec{v}}\})=s\nu_f(U_{\vec{v}})
 \text{ for every }\vec{v}\in\bigl(T_{\cS_G}\sP^1\bigr)\setminus\{\widevec{\cS_Ga}\},\\
 \omega_0(\{\{\cS_G\}\})=s',\quad\text{and}\\
 \omega_0(\{U_{\widevec{\cS_Ga}}\})
 =\bigl(s\nu_f(U_{\widevec{\cS_Ga}})+(1-s)\bigr)-s',
\end{cases}
\end{gather*}
using the computation \eqref{eq:computationdelta} of $\Delta_f$
under the standing assumption that the case (ii) in Theorem \ref{th:computation} occurs and by $\Char k_{\bL}=0$. 
Since $\omega_0\in\Delta_f^\dag$, we first have $s'=0$.

Recalling the identification $\omega_0=\mu_0$ in $M^1(\Gamma_G)^\dag=M^1(\bP^1(\bC))^\dag$ and
the degenerating $f$-balanced property (the former half in \eqref{eq:reductionconstant}) of $\mu=(\mu_0,\mu_E)$, we compute
\begin{multline*}
\bigl(s\nu_f(U_{\widevec{\cS_Ga}})+(1-s)\bigr)-s'\\
=\omega_0(\{U_{\widevec{\cS_Ga}}\})=\mu_0(\{\tilde{a}\})
=\frac{(\widetilde{A\circ f})^*\mu_E}{d}(\{\tilde{a}\})
=\frac{\bigl((\phi_{\widetilde{A\circ f}})^*\mu_E+[H_{\widetilde{A\circ f}}=0]\bigr)(\{\tilde{a}\})}{d},
\end{multline*}
and moreover, 
recalling 
that $\tilde{D}=\phi_{\tilde{D}},\tilde{B_1}=\phi_{\tilde{B_1}}\in\PGL(2,\bC)$,
that $a=B_1^{-1}(\infty)$, that $(B_2\circ f_{B_1})(\infty)=\infty$,
and that $\deg(\phi_{\widetilde{B_2\circ f_{B_1}}})=d_0>0$ (in \eqref{eq:locdeggauss}) 
and using Claim \ref{th:support}, we compute
\begin{multline*}
 \bigl((\phi_{\widetilde{A\circ f}})^*\mu_E\bigr)(\{\tilde{a}\})
=\bigl((\phi_{\widetilde{(D^{-1}\circ B_2\circ B_1\circ f\circ B_1^{-1})}})^*\mu_E\bigr)
(\{\infty\})\\
=\bigl((\phi_{\widetilde{B_2\circ f_{B_1}}})^*(\phi_{\tilde{D}})_*\mu_E\bigr)(\{\infty\})
=\bigl(\deg_{\infty}(\phi_{\widetilde{B_2\circ f_{B_1}}})\bigr)\cdot
\bigl((\phi_{\tilde{D}})_*\mu_E\bigr)(\{\infty\})=0,
\end{multline*}
and on the other hand, we compute
\begin{multline*}
\ord_{\zeta=\tilde{a}}\bigl[H_{\widetilde{A\circ f}}=0\bigr]
\Bigl(
\underset{\eqref{eq:surplusfaber}}{=}s_{\widevec{\cS_Ga}}(f)
\underset{\eqref{eq:argumentnonarchi}}{=}s_{(B_1)_*(\widevec{\cS_Ga})}(f_{B_1})
\underset{\eqref{eq:tangentreduct}}{=}s_{\widevec{\cS_G\infty}}(f_{B_1})=
\Bigr)\\
\underset{\eqref{eq:surplusfaber}}{=}\ord_{\zeta=\infty}\bigl[H_{\widetilde{B_2\circ f_{B_1}}}=0\bigr]
\underset{\eqref{eq:holespoles}}{=}d-\deg_{\cS_G}(f)
\underset{\eqref{eq:stationary}\text{ for }n=1}{=}d\cdot\nu_f(U_{\widevec{\cS_Ga}}).
\end{multline*}
Hence we also have $s'=(1-s)(1-\nu_f(U_{\widevec{\cS_Ga}}))$.

Consequently, we have not only $s'=0$ but also 
$s=1$ since $\nu_f(U_{\operatorname{\widevec{\cS_Ga}}})<1$
(which is a consequence of \eqref{eq:stationary})
in the case (ii) in Theorem \ref{th:computation}.
Then we still have the desired 
$\mu_0=\omega_0=(\pi_{\Gamma_G})_*\nu_f$ in $M^1(\bP^1(\bC))^\dag
=M^1(\Gamma_G)^\dag$ (also by \eqref{eq:projectfactor}). 

Now the proof of Theorem \ref{th:B} is complete.
\qed

\begin{remark}\label{th:remedy}
The arguments in the steps {\bfseries (c.1, 2, 3)} in the proof of Theorem \ref{th:B} 
relate the non-archimedean absolute value $|\cdot|_r$ on $\bL$,
which restricts to the trivial absolute value on $\bC=k_{\bL}$, with the Euclidean absolute value
$|\cdot|$ on $\bC$ and complement \cite[Proof of Theorem B]{DF14}.
The final assertion in \cite[Corollary 5.3]{DF14},
which \cite[Proof of Theorem B]{DF14} is based on,
was shown in \cite{DF14} under the condition \eqref{eq:identically}
(see also Remark \ref{th:notalways}).
\end{remark}

\section{Examples}\label{sec:example}
Pick a meromorphic family 
\begin{gather*}
 f(z)=z^2+t^{-1}z\in\bigl(\cO(\bD)[t^{-1}]\bigr)[z](\subset\bL[z]) 
\end{gather*}
of quadratic polynomials on $\bP^1(\bC)$. 
Then $f^{-1}(\infty)=\{\infty\}=E(f)$,
and the case (ii) (for $a=\infty$) in Theorem \ref{th:computation} occurs
(indeed $(\cS_G,a]\ni f^n(\cS_G)=\cS_{B(0,|t^{-2^{n-1}}|_r)}\to\infty$ 
as $n\to\infty$ 
since $\cS_G$ is represented by (the constant sequence of) the
$\bL$-closed disk $\cO_K=B(0,1)$, $f(0)=0$,
$|f(1)|_r=|t^{-1}|_r(>1)$, $|f(t^{-1})|_r=|t^{-2}|_r>|t^{-1}|_r$, and
$|f(z)|_r=|z|_r^2$ on $\bL\setminus B(0,|t^{-1}|_r)$;
see \eqref{eq:normLaurent} 
for the absolute value $|\cdot|_r$ on $\bL$). Since $f'(z)=2z+t^{-1}\in\bL[z]$, 
the point $-t^{-1}+1\in U_{\widevec{\cS_G\infty}}\cap\bL$
is a (classical) repelling fixed point of $f$
(indeed $f(-t^{-1}+1)=-t^{-1}+1$ and $|f'(-t^{-1}+1)|_r=|t^{-1}|_r>1$), 
which is in $\sJ(f)=\supp\nu_f$, so we in particular have
$\nu_f(U_{\widevec{\cS_G\infty}})>0$.
Hence 
\eqref{eq:identically} in Remark \ref{th:notalways}
is not the case for this $f$. 

\section{A complement of Proposition \ref{th:transfer}}
\label{sec:complement}

Let us continue to use the notations in Sections \ref{sec:quantizedbalanced} 
and \ref{sec:direct}. 
Let 
\begin{gather*}
 f\in\bigl(\cO(\bD)[t^{-1}]\bigr)(z)(\subset\bL(z))
\end{gather*}
be a meromorphic family 
of rational functions on $\bP^1(\bC)$ of degree $d>1$,
and suppose that $f^{-1}(\cS_G)\neq\{\cS_G\}$ in $\sP^1(\bL)$.
Recall that $\Gamma_G:=\{\cS_G\}$ and $\Gamma_n:=\Gamma_{f^n}:=\{\cS_G,f^n(\cS_G)\}$
in $\sH^1_{\mathrm{II}}(\bL)$ for every $n\in\bN$ and that
$M^1(\Gamma_G)^{\dag}$ is identified with $M^1(\bP^1(\bC))^{\dag}$
under the bijection $S(\Gamma_G)\setminus\{\cS_G\}
=T_{\cS_G}\sP^1(\bL)\cong\bP^1(k_{\bL})=\bP^1(\bC)$. 

For every $n\in\bN$, pick a meromorphic family
$A_n\in(\cO(\bD)[t^{-1}])(z)$ 
of M\"obius transformations on $\bP^1(\bC)$
such that $(A_n\circ f^n)(\cS_G)=\cS_G$ in $\sP^1(\bL)$
(by Theorem \ref{th:postfamily}), and set 
\begin{gather*}
 A:=A_1.
\end{gather*}
We note that for any $\mu=(\mu_C,\mu_E)\in (M^1(\bP^1(\bC))^\dag)^2$
satisfying the admissibility \eqref{eq:compatibility} (for this $A$),
we still have $\omega_{\mu}\in M^1(\Gamma_f)^\dag$
(and $\omega_{\mu}(S(\Gamma_f)\setminus F)=0$ for some countable subset $F$
in $S(\Gamma_f)$). 

Conversely, 
for every $\omega\in M^1(\Gamma_f)^{\dag}$ satisfying
$\omega(S(\Gamma_f)\setminus F)=0$ for some countable subset $F$
in $S(\Gamma_f)$,
there is a unique ordered pair
\begin{gather*}
 \mu_\omega=(\mu_{\omega,C},\mu_{\omega,E})\in\bigl(M^1(\bP^1(\bC))^{\dag}\bigr)^2 =\bigl(M^1(\Gamma_G)^{\dag}\bigr)^2
\end{gather*}
such that
when $\Gamma_f=\Gamma_G$($\Leftrightarrow\tilde{A}=\phi_{\tilde{A}}$), 
\begin{gather*}
\begin{cases}
 \mu_{\omega,C}:=(\pi_{\Gamma_f,\Gamma_G})_*\omega\in M^1(\Gamma_G)^\dag=M^1(\bP^1(\bC))^{\dag},\\
 \mu_{\omega,E}:=\tilde{A}_*(\pi_{\Gamma_f,\Gamma_G})_*\omega
=\tilde{A}_*\mu_{\omega,C}\in M^1(\Gamma_G)^\dag=M^1(\bP^1(\bC))^{\dag}
\end{cases}
\end{gather*}
and, when $\Gamma_f\neq\Gamma_G$,
noting that $\{f(\cS_G)\}\subset\Gamma_f\subset\sH^1_{\mathrm{II}}(\bL)$,
\begin{gather*}
\begin{cases}
 \mu_{\omega,C}(\{\tilde{x}\})=\bigl((\pi_{\Gamma_f,\Gamma_G})_*\omega\bigr)\bigl(\{U_{\widevec{\cS_Gx}}\}\bigr)
 &\text{for every }\tilde{x}\in\bP^1(k_{\bL})=\bP^1(\bC),\\
 \mu_{\omega,E}(\{\tilde{y}\})=\bigl((\pi_{\Gamma_f,\{f(\cS_G)\}})_*\omega\bigr)\bigl(\bigl\{U_{(A^{-1})_*(\widevec{\cS_Gy})}\bigr\}\bigr)
 &\text{for every }\tilde{y}\in\bP^1(k_{\bL})=\bP^1(\bC);
\end{cases}
\end{gather*}
then this ordered pair $\mu_\omega=(\mu_{\omega,C},\mu_{\omega,E})$ satisfies the admissibility
\eqref{eq:compatibility} (for $A$)
(by Lemma \ref{th:annulus} when $\Gamma_f\neq\Gamma_G$), and in turn
we have both
\begin{gather}
\omega_{\mu_\omega}=\omega\quad\text{in }M^1(\Gamma_f)^\dag
\quad\text{and}\quad
\mu_{\omega_\mu}=\mu\quad\text{in }\bigl(M^1(\bP^1(\bC))^{\dag}\bigr)^2,\label{eq:bijective}
\end{gather} 
that is, the map $ (M^1(\bP^1(\bC))^\dag)^2\ni\mu\mapsto\omega_{\mu}\in M^1(\Gamma_f)^\dag$
is bijective.

We conclude with the following complement of Proposition \ref{th:transfer}.

\begin{proposition}[cf.\ {\cite[Proposition 5.1 and Theorem 5.2]{DF14}}]\label{th:complement}
There is the bijection
\begin{multline*}
 \Bigl\{(\mu_C,\mu_E)\in\bigl(M^1(\bP^1(\bC))^\dag\bigr)^2:
\text{satisfying the admissibility \eqref{eq:compatibility} $($for $A)$ and}\\
\text{the degenerating }f\text{-balanced property }(\widetilde{A\circ f})^*\mu_E=d\cdot\mu_C\text{ in }M(\sP^1)\Bigr\}\ni\mu\\
\mapsto\omega_\mu\in\Bigl\{\omega\in M^1(\Gamma_f)^\dag:\text{ satisfying }
\omega(S(\Gamma_f)\setminus F)=0 \text{ for some countable subset }F\\
\text{ in }S(\Gamma_f)\text{ and }
f_G^*\omega=d\cdot(\pi_{\Gamma_f,\Gamma_G})_*\omega\text{ in }M(\Gamma_G)\Bigr\},
\end{multline*}
the inverse of which is given by the map $\omega\mapsto\mu_\omega$.
This bijection induces the bijection 
\begin{gather*}
 \Delta_0^\dag\ni\mu_C\mapsto(\pi_{\Gamma_n,\Gamma_G})_*\bigl(\omega_{(\mu_C,\mu_E^{(n)})}\bigr)\in\Delta_f^\dag,
\end{gather*}
where
\begin{multline*}
\Delta_0^\dag:=\Bigl\{\mu_C\in M^1(\bP^1(\bC))^\dag:
\text{for $($any$)$ }n\gg 1,\\
\text{ there is }\mu_E^{(n)}\in M^1(\bP^1(\bC))^\dag
\text{ such that }
(\widetilde{A_n\circ f^n})^*\mu_E^{(n)}=d\cdot\mu_C\Bigr\}.
\end{multline*}
\end{proposition}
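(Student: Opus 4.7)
The plan is to establish the first bijection directly from the definitions of $\omega_\mu$ and $\mu_\omega$, using Proposition \ref{th:transfer} for the forward direction, the identity \eqref{eq:bijective} for one composition, and a case-by-case reversal of the computations in the proof of Proposition \ref{th:transfer} for the other composition. The second bijection will then follow by applying the first with $f$ replaced by the iterate $f^n$ for $n\gg 1$, under the identification $M^1(\Gamma_G)^\dag=M^1(\bP^1(\bC))^\dag$.

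First I would check that $\mu\mapsto\omega_\mu$ lands in the target set: since $\mu_C,\mu_E$ are purely atomic, the non-atomic components $\nu_C,\nu_E$ in \eqref{eq:omega} vanish, so $\omega_\mu\in M^1(\Gamma_f)^\dag$, and the quantized $f$-balanced property for $\omega_\mu$ is precisely \eqref{eq:tosurface}. For the inverse $\omega\mapsto\mu_\omega$, I would first check admissibility \eqref{eq:compatibility}. When $\Gamma_f=\Gamma_G$ it is immediate from $\tilde A\in\PGL(2,\bC)$ and the definition of $\mu_{\omega,E}$. When $\Gamma_f\neq\Gamma_G$, the sets $U_{\overrightarrow{\cS_Gf(\cS_G)}}$ and $U_{\overrightarrow{f(\cS_G)\cS_G}}$ cover $\sP^1(\bL)$ with intersection equal to the annular component of $\sP^1(\bL)\setminus\{\cS_G,f(\cS_G)\}$, so by Lemma \ref{th:annulus} we obtain
\begin{gather*}
\mu_{\omega,C}(\{h_A\})+\mu_{\omega,E}(\{a_A\})=\omega\bigl(U_{\overrightarrow{\cS_Gf(\cS_G)}}\bigr)+\omega\bigl(U_{\overrightarrow{f(\cS_G)\cS_G}}\bigr)=1+\omega\bigl(U_{\overrightarrow{\cS_Gf(\cS_G)}}\cap U_{\overrightarrow{f(\cS_G)\cS_G}}\bigr)\ge 1.
\end{gather*}

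To produce the degenerating balanced property $(\widetilde{A\circ f})^*\mu_{\omega,E}=d\cdot\mu_{\omega,C}$, I would observe that each of the atomic identities derived in the proof of Proposition \ref{th:transfer} — typically of the form $((f_{\Gamma_G,\Gamma_f})^*\omega)(U_{\overrightarrow{\cS_Gx}})=((\widetilde{A\circ f})^*\mu_E)(\{\tilde x\})$ and $((\pi_{\Gamma_f,\Gamma_G})_*\omega)(U_{\overrightarrow{\cS_Gx}})=\mu_C(\{\tilde x\})$ — is in fact an \emph{unconditional} consequence of \eqref{eq:pullbackdefining}, \eqref{eq:surplusfaber}, \eqref{directdegdef}, and \eqref{eq:tangentreduct}, so the implication in \eqref{eq:tosurface} upgrades to an equivalence. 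The composition $\mu_{\omega_\mu}=\mu$ is exactly \eqref{eq:bijective}, while $\omega_{\mu_\omega}=\omega$ reduces to a direct case check on each $V\in S(\Gamma_f)$, the only nontrivial case being the annulus, which is handled by the identity $\omega(U_{\overrightarrow{\cS_Gf(\cS_G)}})+\omega(U_{\overrightarrow{f(\cS_G)\cS_G}})=1+\omega(\text{annulus})$ used above. I expect the main obstacle to be the bookkeeping required for this reversal of Proposition \ref{th:transfer}, in particular the separation of the directions $f_*(\overrightarrow{\cS_Gx})=\overrightarrow{f(\cS_G)\cS_G}$ and $f_*(\overrightarrow{\cS_Gx})=(A^{-1})_*(\overrightarrow{\cS_Gy})$ together with the special point $\{\cS_G\}$.

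Finally, for the second bijection I would apply the first to the triple $(f^n,A_n,\Gamma_{f^n})$ for each $n\gg 1$. Given $\mu_C\in\Delta_0^\dag$ with witness $\mu_E^{(n)}\in M^1(\bP^1(\bC))^\dag$, the first bijection assigns to $(\mu_C,\mu_E^{(n)})$ an $\omega_n:=\omega_{(\mu_C,\mu_E^{(n)})}\in M^1(\Gamma_{f^n})^\dag$ satisfying the quantized $f^n$-balanced property \eqref{eq:quantizedbalanced}; since by \eqref{eq:bijective} the projection $(\pi_{\Gamma_{f^n},\Gamma_G})_*\omega_n$ equals $\mu_C$ under $M^1(\Gamma_G)^\dag=M^1(\bP^1(\bC))^\dag$, we conclude $\mu_C\in\Delta_f^\dag$ with $\omega_n$ as witness, giving a well-defined map $\Delta_0^\dag\to\Delta_f^\dag$. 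Conversely, for $\omega\in\Delta_f^\dag$ with witness $\omega_n\in M^1(\Gamma_{f^n})$, the two relations $\omega=(\pi_{\Gamma_{f^n},\Gamma_G})_*\omega_n$ and $d^{-n}((f^n)_{\Gamma_G,\Gamma_{f^n}})^*\omega_n=\omega$ together with $\omega(\{\cS_G\})=0$ force $\omega_n(\{\cS_G\})=\omega_n(\{f^n(\cS_G)\})=0$ (the second via \eqref{eq:pullbackdefining} applied at $\{\cS_G\}$ and $\deg_{\cS_G}(f^n)\ge 1$), so $\omega_n\in M^1(\Gamma_{f^n})^\dag$; the inverse of the first bijection then recovers a pair $(\omega,\mu_E^{(n)})\in(M^1(\bP^1(\bC))^\dag)^2$ with the degenerating $f^n$-balanced property, placing $\omega\in\Delta_0^\dag$ and confirming that the two maps are mutually inverse.
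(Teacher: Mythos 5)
Your proposal takes essentially the same approach as the paper's (very terse) proof, which reads: ``The former assertion follows from the proof of Proposition \ref{th:transfer} and \eqref{eq:bijective}. Then the latter holds also by \eqref{eq:converse}.'' You correctly identify the two key observations the paper relies on but does not spell out: that the per-atom identities computed in the proof of Proposition \ref{th:transfer} (namely $((f_{\Gamma_G,\Gamma_f})^*\omega_\mu)(U_{\overrightarrow{\cS_Gx}})=((\widetilde{A\circ f})^*\mu_E)(\{\tilde x\})$ and $((\pi_{\Gamma_f,\Gamma_G})_*\omega_\mu)(U_{\overrightarrow{\cS_Gx}})=\mu_C(\{\tilde x\})$, plus the analogous ones at $\{\cS_G\}$) are \emph{unconditional}, so the implication \eqref{eq:tosurface} is in fact an equivalence; and that the annulus mass identity $\omega(U_{\overrightarrow{\cS_Gf(\cS_G)}})+\omega(U_{\overrightarrow{f(\cS_G)\cS_G}})=1+\omega(U_{\overrightarrow{\cS_Gf(\cS_G)}}\cap U_{\overrightarrow{f(\cS_G)\cS_G}})$ yields both admissibility of $\mu_\omega$ (which is the content of the parenthetical ``by Lemma \ref{th:annulus}'' in Section \ref{sec:complement}) and the composition $\omega_{\mu_\omega}=\omega$. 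Your reduction of the second bijection to the first applied to $(f^n,A_n,\Gamma_{f^n})$, together with the check that $\omega_n(\{\cS_G\})=\omega_n(\{f^n(\cS_G)\})=0$ follows from $\omega(\{\cS_G\})=0$ via \eqref{eq:pullbackdefining}, is exactly what the paper's reference to \eqref{eq:converse} is invoking.

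One point you leave implicit, as does the paper itself: to form $\omega_{(\mu_C,\mu_E^{(n)})}$ from a witness $\mu_E^{(n)}$ for $\mu_C\in\Delta_0^\dag$, the pair $(\mu_C,\mu_E^{(n)})$ must satisfy the admissibility \eqref{eq:compatibility}, which the definition of $\Delta_0$ does not mention. Since this is exactly the same silent step in the paper's proof, it is not a fault of your argument; but if you were writing this up fully you would want to either verify that the degenerating $f^n$-balanced property forces admissibility, or add admissibility to the defining conditions of $\Delta_0$.
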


\begin{proof}
 The former assertion follows from 
 \eqref{eq:bijective} and the computations in the steps 
 {\bfseries (a-1)} and {\bfseries (b-1)} in the proof of
 Proposition \ref{th:transfer}. Then the latter assertion
holds also by \eqref{eq:converse}.
\end{proof}

\begin{acknowledgement}
The author thanks the referee for a very careful scrutiny and invaluable comments.
The author was partially supported by JSPS Grant-in-Aid 
for Scientific Research (C), 19K03541,
Institut des Hautes \'Etudes Scientifiques, and
F\'ed\'eration de recherche math\'ematique des Hauts-de-France
(FR CNRS 2956). The author was a long term researcher
of RIMS, Kyoto University in the period of April 2019 - March 2020, and
also thanks the hospitality there.
\end{acknowledgement} 

\def\cprime{$'$}

\end{document}